\documentclass[11pt]{amsart}
\usepackage[top=1.2in,bottom=1.2in,left=1.2in,right=1.2in,marginpar=1in]{geometry}
\usepackage{amsfonts}
\usepackage{amsmath}
\usepackage{amsthm}
\usepackage{url}
\usepackage[all]{xy}
\usepackage{labelfig}
\usepackage{latexsym}
\usepackage{amssymb}
\usepackage[cp850]{inputenc}
\usepackage{graphicx}
\usepackage{dsfont}
\usepackage{soul}
\usepackage{stackrel}
\usepackage[linktocpage=true]{hyperref}
\usepackage{graphicx}
\usepackage{caption}
\usepackage{subcaption}
\usepackage{lipsum}
\usepackage{xfrac}
\usepackage{setspace}
\usepackage{bigints}
\usepackage{ stmaryrd } 
\usepackage{calrsfs}
\usepackage{comment}
\usepackage{relsize}
\usepackage{array}
\usepackage{xcolor}
\usepackage[normalem]{ulem}
\usepackage{hyperref}

\usepackage[autostyle, english = american]{csquotes}
\MakeOuterQuote{"}

\usepackage[dvipsnames]{xcolor}

\usepackage[cal=euler]{mathalfa}
\hypersetup{
    colorlinks = true,
    linkcolor = {NavyBlue},
    citecolor = {NavyBlue},
}
\usepackage{stmaryrd} 
\usepackage{mathtools} 
\usepackage{braket}
\usepackage{tikz-cd}

\let\oldtocsection=\tocsection

\let\oldtocsubsection=\tocsubsection

\let\oldtocsubsubsection=\tocsubsubsection

\renewcommand{\tocsection}[2]{\hspace{0em}\oldtocsection{#1}{#2}}
\renewcommand{\tocsubsection}[2]{\hspace{1em}\oldtocsubsection{#1}{#2}}
\renewcommand{\tocsubsubsection}[2]{\hspace{2em}\oldtocsubsubsection{#1}{#2}}

\renewcommand{\hat}{\widehat}

\newcounter{Lcomments}
\newcommand{\ludo}[1]{\textbf{\color{Green}(L\arabic{Lcomments})} \marginpar{\scriptsize\raggedright\textbf{\color{Green}(L\arabic{Lcomments})Ludo: }#1}
	\addtocounter{Lcomments}{1}}

 \newcounter{Jcomments}

 \newcounter{Todo}

\newtheorem{sat}{Theorem}[section]		
\newtheorem{lem}[sat]{Lemma}
\newtheorem{kor}[sat]{Corollary}			
\newtheorem{prop}[sat]{Proposition}
\newtheorem{bei}{Example}
\newtheorem{defi}{Definition}
\newtheorem*{defi*}{Definition}			
\newtheorem*{bei*}{Example}
\newtheorem*{sat*}{Theorem}				
\newtheorem*{kor*}{Corollary}
\newtheorem*{rmk*}{Remark}				
\newtheorem{quest}{Question}	
\newtheorem*{quest*}{Question}

\let\ssection=\section
\renewcommand{\section}{\setcounter{equation}{0}\ssection}

\newtheorem*{namedtheorem}{\theoremname}
\newcommand{\theoremname}{testing}
\newenvironment{named}[1]{\renewcommand{\theoremname}{#1}\begin{namedtheorem}}{\end{namedtheorem}}

\theoremstyle{remark}
\newtheorem*{bem}{Remark}

\newtheorem*{namedtheoremr}{\theoremnamer}
\newcommand{\theoremnamer}{testing}

\newcommand{\BA}{\mathbb A}

			\newcommand{\BH}{\mathbb H}

			\newcommand{\BN}{\mathbb N}
			
			\newcommand{\BR}{\mathbb R}
\newcommand{\BS}{\mathbb S}

			\newcommand{\BZ}{\mathbb Z}

\newcommand{\CA}{\mathcal A}			
			\newcommand{\calD}{\mathcal D}
\newcommand{\CE}{\mathcal E}			
\newcommand{\CG}{\mathcal G}			
\newcommand{\CI}{\mathcal I}			\newcommand{\CJ}{\mathcal J}

\newcommand{\CO}{\mathcal O}			\newcommand{\CP}{\mathcal P}
			\newcommand{\CR}{\mathcal R}
			\newcommand{\CT}{\mathcal T}
			\newcommand{\CV}{\mathcal V}
\newcommand{\CW}{\mathcal W}		\newcommand{\CX}{\mathcal X}
\newcommand{\CY}{\mathcal Y}			\newcommand{\CZ}{\mathcal Z}

\newcommand{\FM}{\mathfrak m}

\newcommand{\FF}{\mathfrak f}

\usepackage{bm} 

\newcommand{\D}{\partial}

\newcommand{\bs}{\backslash}

\DeclareMathOperator{\PSL}{PSL}		
\DeclareMathOperator{\Id}{Id}		
\DeclareMathOperator{\Isom}{Isom}	

\DeclareMathOperator{\Map}{Map}

\DeclareMathOperator{\Ker}{Ker}

\DeclareMathOperator{\area}{area}

\DeclareMathOperator{\Leb}{Leb}

\DeclareMathOperator{\Liouville}{Liouv}
\DeclareMathOperator{\kinematic}{kin}

\newcommand{\fsubd}{\mathrel{{\scriptstyle\searrow}\kern-1ex^d\kern0.5ex}}
\newcommand{\bsubd}{\mathrel{{\scriptstyle\swarrow}\kern-1.6ex^d\kern0.8ex}}

\renewcommand{\le}{\leqslant}
\renewcommand{\ge}{\geqslant}
\renewcommand{\emptyset}{\varnothing}

\title{Can you hear the shape of a hyperbolic surface? Now for real.}
\author{Ludovico Battista}
\address{Max-Planck-Institut f\"ur Mathematik, Bonn}
\email{ludox73@gmail.com}
\author{Juan Souto}
\address{IRMAR, Universit\'e de Rennes, Rennes}
\email{jsoutoc@proton.me}

\date{April 2026}

\begin{document}

\begin{abstract} 
We associate a musical instrument, a {\em hyperbolic marimba}, to every pair $(X,\Gamma)$ where $X$ is a hyperbolic surface and $\Gamma\subset X$ a simple multicurve labeled with musical keys. It works as follows: take a geodesic and every time it hits $\Gamma$, play the corresponding note. In this paper we investigate to which extent the so-produced melodies characterize $(X,\Gamma)$ up to isometry. 

In the accompanying website \href{https://ludox73.github.io/HyperMarimba/story.html}{{\em HyperMarimba}} \cite{webpage}, the reader can actually listen to the produced melodies. They can also visualize some of the phenomena we investigate.
\end{abstract}

\maketitle

\section{Introduction}
In 1966 Kac wrote the magnifically titled paper {\em Can one hear the shape of a drum?}, asking whether a plane domain was determined, up to isometry, by the Dirichlet eigenvalues of the Laplacian, the frequencies at which a drum of that shape would vibrate \cite{Kac}. Years later, isospectral domains were built by Gordon, Webb and Wolpert \cite{Gordon-Webb-Wolpert1,Gordon-Webb-Wolpert2}, answering Kac's question in the negative. Flat isospectral tori--of dimension 16--had been constructed even before Kac's paper by Milnor \cite{Milnor} and closed isospectral hyperbolic surfaces were built later on by Vign\'eras \cite{Vigneras} and Sunada \cite{Sunada}. For closed hyperbolic surfaces, the spectrum of the Laplacian determines and is determined by the (unmarked) length spectrum, that is the set of numbers, counted with multiplicity, which arise as the length of some closed geodesic. This is why questions around the extent to which the length spectrum and its cousins determine a hyperbolic surface are often packaged under the "can one hear?" type of problem. This has historic value, but, if you allow, it is also a bit of a marketing gimmick: since there is no (smooth) isometric embedding of a closed hyperbolic surface in $\BR^3$, it would be really hard to build an actual drum that sounded like the surface in question. In this paper we investigate if one can hear for real the shape of a hyperbolic surface.

Our approach is the following. Suppose that $X$ is a closed connected hyperbolic surface of genus $g\ge 2$ and $\Gamma$ a simple multicurve in $X$, that is a collection of disjoint simple closed geodesics. Label each component of $\Gamma$ with a musical note--different notes for different components--and think of the pair $(X,\Gamma)$ as a {\em hyperbolic marimba}, a musical instrument whose sound provides a sharp start to each note. Now, given a unit speed geodesic $\rho_v^X:\BR\to X$ we get a {\em melody} as follows: if at time $t\in\BR$ the point $\rho_v^X(t)$ lies on $\eta\in\Gamma$, play the key associated to $\eta$. The interested reader can go to the website \href{https://ludox73.github.io/HyperMarimba/story.html}{{\em HyperMarimba}} \cite{webpage} and actually listen to the melodies of some randomly chosen geodesics in genus $2$ hyperbolic marimbas $(X,\Gamma)$. Here "random" means that their initial tangent vectors $v=\dot\rho^X_v(0)\in T^1X$ are chosen at random with respect to the Liouville probability measure. In Section \ref{sec ergodic} we will give a more precise meaning to the word "random" and the "melody" of a random vector is defined in Section \ref{sec definition of melodies}, but an intuitive understanding of what we mean suffices for now.

\begin{figure}[h]
\begin{center}
\includegraphics[width=0.6\textwidth]{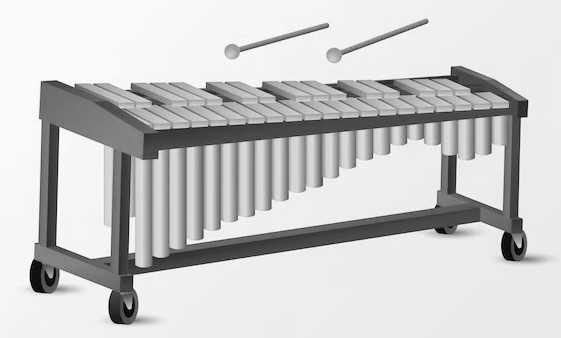}
\end{center}
\caption{A classical marimba}
\end{figure}

Although different geodesics in the marimba $(X,\Gamma)$ will produce different melodies, what we study is to which extent random melodies allow us to distinguish between different marimbas, very much as one would likely recognize the different origins of randomly chosen Finns, Indians, Texans, Peruvians, or Iranians playing their traditional music. More specifically, we want to figure out if for random vectors $v\in T^1X$ and $v'\in T^1X'$ one can hear from the melodies of the geodesics $\rho_v^X$ and $\rho_{v'}^{X'}$ whether two hyperbolic marimbas
$(X,\Gamma)$ and $(X',\Gamma')$ are {\em isometric} or not, that is whether there is an isometry $\phi:X\to X'$ sending, in a label preserving way, $\Gamma$ to $\Gamma'$. The scientifically minded reader can file this under the rubric of inverse problems.

Let us also make clear what we mean by "hearing". When humans listen to language or music, what they are doing is recognizing patterns in what they are physically hearing. An example of such a pattern is the set of notes one hears. For random vectors $v\in T^1X$, the geodesic $\rho_v^X$ meets every component of the multicurve $\Gamma$, hence every note is eventually played. It follows that we can distinguish marimbas $(X,\Gamma)$ and $(X',\Gamma')$ which use different sets of notes to label the components of $\Gamma$ and $\Gamma$. To avoid trivial cases we suppose from now on that
\begin{quote}
    when we are comparing two hyperbolic marimbas $(X,\Gamma)$ and $(X',\Gamma')$ the same notes have been used to label the components of $\Gamma$ and of $\Gamma'$.
\end{quote}
As we will see, besides the keys being played, one can hear much more from random melodies. To make this precise we need to define what we understand under a {\em motif}.

\begin{defi}
Let $(X,\Gamma)$ be a hyperbolic marimba. A {\em motif} of length $k\ge 0$ is a tuple $[(\eta_i)_{i=0,\dots,k},(t_i)_{i=1,\dots,k},\epsilon]$ consisting of a finite sequence of notes $\eta_i\in\Gamma$, a finite sequence of times $0<t_1<\dots< t_k$, and some $\epsilon>0$. 

The motif $[(\eta_i)_{i=0,\dots,k},(t_i)_{i=1,\dots,k},\epsilon]$ {\em is played at time} $t$ in the melody of a geodesic $\rho^X_v:\BR\to X$ if there are $0=s_0<s_1<s_2<\dots<s_k$ with $s_i\in[t_i,t_i+\epsilon]$ for $i=1,\dots,k$ such that the following holds:
\begin{itemize}
    \item $\rho_v^X(t+s_i)\in\eta_i$ for $i=0,\dots,k$, and
    \item $\{t,t+s_1\dots,t+s_k\}=\{s\in[t,t+s_k]\text{ with }\rho_v^X(s)\in\Gamma\}$.
\end{itemize}
\end{defi}

\begin{bem}
    The positive number $\epsilon>0$ in $[(\eta_i)_{i=0,\dots,k},(t_i)_{i=1,\dots,k},\epsilon]$ is there to give to a randomly chosen geodesic a chance to play the motif. If even the most accomplished performers diverge slightly from the timing indicated in their scores, what is to be expected from random ones?
\end{bem}

Basic ergodic theory implies that for every motif and every random $v\in T^1X$ the limit 
\begin{equation}\label{eq frequency exists}
\FF^{(X,\Gamma)}_{[(\eta_i),(t_i),\epsilon]}=\lim_{T\to\infty}\frac 1T\#\left\{t\in[0,T]\middle\vert\begin{array}{l}
\text{ the motif }[(\eta_i),(t_i),\epsilon]\text{ is played}\\ \text{ at time }t\text{ in the melody of }\rho_v^X\end{array}\right\}
\end{equation}
exists and is independent of the random vector $v$--see Proposition \ref{prop fequencies tell it all}. In other words, in the melody of a random geodesic $\rho_v^X$, every motif appears with a certain frequency (possibly $0$) which is independent of $v$. We refer to $\FF^{(X,\Gamma)}_{[(\eta_i),(t_i),\epsilon]}$ as the {\em frequency} of the motif $[(\eta_i),(t_i),\epsilon]$.

\begin{defi}
    Two hyperbolic marimbas $(X,\Gamma)$ and $(X',\Gamma')$ for which we have a label preserving bijection $\Gamma\simeq\Gamma'$ are {\em isomelodic} if we have 
    $$\FF^{(X,\Gamma)}_{[(\eta_i),(t_i),\epsilon]}=\FF^{(X',\Gamma')}_{[(\eta_i),(t_i),\epsilon]}$$
    for every motif $[(\eta_i),(t_i),\epsilon]$.
\end{defi}

Our first result is that isomelodic marimbas are essentially indistinguishable:

\begin{sat}\label{sat measure theory}
    Two hyperbolic marimbas $(X,\Gamma)$ and $(X',\Gamma')$ are isomelodic if and only if there is a full measure set $U\subset T^1X$ and an absolutely continuous map
    $$\Phi:U\to T^1X'$$
    such that for all $v\in U$ the $\Gamma$-melody of the geodesic in $X$ corresponding to $v$ is exactly the same one that the $\Gamma'$-melody of the geodesic in $X'$ corresponding to $\Phi(v)$: exactly the same notes being played at exactly the same times.
\end{sat}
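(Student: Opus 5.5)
The plan is to encode melodies as points of a symbolic flow space, to read frequencies as measures of cylinder sets there, and to reduce both implications to a statement about push-forwards of the Liouville measures. Let $\mathcal M$ be the space of bi-infinite melodies: discrete sets of times in $\BR$, each labelled by a note. It carries the time-translation flow and the natural Borel structure generated by cylinder sets. These cylinder sets are exactly the sets "motif $[(\eta_i),(t_i),\epsilon]$ is played at time $0$".

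For $(X,\Gamma)$ let $M:T^1X\to\mathcal M$ send $v$ to the $\Gamma$-melody of $\rho^X_v$. This map is Borel and equivariant (geodesic flow to translation flow). Set $\mu=M_*\Liouville$, a translation-invariant ergodic probability measure on $\mathcal M$, and define $M'$ and $\mu'$ in the same way for $(X',\Gamma')$. By Proposition \ref{prop fequencies tell it all}, the frequency $\FF^{(X,\Gamma)}_{[(\eta_i),(t_i),\epsilon]}$ is computed from $\mu$ through the ergodic theorem. Concretely, it is a fixed normalisation of the $\mu$-measure of the corresponding cylinder set, where one passes to the cross-section $T^1X|_\Gamma$ of vectors based on $\Gamma$, whose first-return map is measurable and preserves the induced measure. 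In particular the frequencies determine, and are determined by, the values of $\mu$ on cylinders.

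\emph{Step 1.} I first show that isomelodic is equivalent to $\mu=\mu'$. Cylinders, together with the "no note in an interval" conditions, form a $\pi$-system generating the Borel $\sigma$-algebra of $\mathcal M$. Equal frequencies give equality of $\mu$ and $\mu'$ on this $\pi$-system, and Dynkin's $\pi$-$\lambda$ theorem upgrades this to $\mu=\mu'$. One also has to check that the normalisation relating frequencies to measures (essentially $\ell_X(\Gamma)/(\pi\,\area(X))$, by Santal\'o/Crofton) can itself be heard. It is the frequency of the empty-length motif, so it matches as well.

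\emph{Step 2 (the "if" direction).} Suppose $\Phi$ is given. Let $G'\subset T^1X'$ be the full-measure set on which all the limits \eqref{eq frequency exists} exist; countably many rational motifs suffice, by monotonicity in $t_i$ and $\epsilon$. Absolute continuity of $\Phi$ means $\Phi^{-1}(T^1X'\setminus G')$ is null, so for a.e. $v$ both $v$ and $\Phi(v)$ are generic. Since their melodies coincide note for note and time for time, every motif is counted identically, and the frequencies agree.

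\emph{Step 3 (the "only if" direction).} Assume $\mu=\mu'$. Choose, by a measurable selection theorem (von Neumann / Jankov), a $\mu'$-measurable section $s':M'(T^1X')\to T^1X'$ of $M'$, defined on a set of full $\mu'=\mu$ measure. Put $\Phi=s'\circ M$ on $U=M^{-1}(\text{domain of }s')$. Then $M'\circ\Phi=M$, so melodies agree exactly, and $U$ has full measure. The remaining issue, which I expect to be the main obstacle, is absolute continuity of $\Phi$. An arbitrary selection could concentrate mass on a null set whenever the fibres $M'^{-1}(m)$ are large.

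To handle this I would prove that the fibres of $M'$ are a.e. countable, and I expect in fact a.e. finite of constant cardinality by ergodicity, since fibre cardinality is flow invariant. Two geodesics with the same melody hit $\Gamma'$ at the same times with the same labels. Lifting to $\BH^2$, each is confined between consecutive lifts of the crossed components, and a generic geodesic crosses lifts of $\Gamma'$ infinitely often in both directions. Given the lift of the first crossing, the exact return times then pin down the subsequent lifts up to finitely many choices at each step. Consequently only countably many lift sequences are compatible with a given melody, and each sequence nests down to a unique pair of endpoints, hence to a unique geodesic. With countable fibres, disintegrating $\Liouville'$ over $M'$ gives purely atomic conditional measures. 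A null set $A$ then satisfies $\mu'(M'(A))=0$ up to null sets, because a point of $M'(A)$ with positive conditional mass on $A$ would force $A$ to have positive measure. It follows that $\Liouville(\Phi^{-1}(A))\le\mu(M'(A))=0$. If the nesting argument turns out to be delicate when $\Gamma'$ does not fill, I would fall back on choosing $s'$ via the disintegration itself, selecting atoms of maximal conditional weight, which keeps the same estimate.
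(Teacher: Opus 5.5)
Your overall architecture is the paper's. Frequencies determine $\FM_{(X,\Gamma)}$. The ``if'' direction uses a vector that is random in both surfaces. The ``only if'' direction composes the melody map of $(X,\Gamma)$ with a measurable section of the melody map of $(X',\Gamma')$, and absolute continuity comes from atomic conditional measures.

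There is a genuine gap exactly at the step you flagged, and your fallback does not repair it. Your countability argument for the fibres of $M'$ does not work. From ``finitely many choices for the next lift at each step'' you cannot conclude that only countably many lift sequences are compatible with a melody: a finitely branching infinite tree can have continuum many infinite branches. The nesting observation only says that a complete bi-infinite lift sequence determines the geodesic; it gives no bound on how many such sequences exist.

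What is needed is a rigidity statement: \emph{finitely many} crossings already pin the geodesic down up to finite ambiguity. This is the paper's Lemma~\ref{lem discreteness of music}. For disjoint geodesics $\gamma_-,\gamma_0,\gamma_+\subset\BH^2$ and $s,t>0$, consider the loci $\{v\in T^1\BH^2\vert_{\gamma_0}:\rho_v(-s)\in\gamma_-\}$ and $\{v\in T^1\BH^2\vert_{\gamma_0}:\rho_v(t)\in\gamma_+\}$. These are distinct irreducible analytic curves in the cylinder $\gamma_0\times\BS^1$, so they meet in finitely many points. Combine this with local finiteness of lifts: after normalising by deck transformations, only finitely many lifts of the neighbouring notes lie within the prescribed distances of a fundamental segment of the current lift. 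Together these make the melody map finite-to-one.

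Your fallback of selecting atoms via the disintegration presupposes that the conditional measures are atomic, which is exactly what countable fibres are needed for. If the conditionals were non-atomic, no absolutely continuous section would exist at all.

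There is also a smaller gap in your final estimate. From $\Liouville_{X'}(A)=0$ you only get $\mu'(\{m:\mu'_m(A)>0\})=0$, not $\mu'(M'(A))=0$. Nothing in your argument prevents a fibre from containing points of zero conditional mass. An arbitrary von Neumann--Jankov selection may land on such points for a positive-measure set of melodies, and then $\Phi$ is not absolutely continuous. So selecting atoms is not optional.

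The paper secures this through Proposition~\ref{lem kappa to one}. By Lemma~\ref{lem invariance of disintegration}, the function $v\mapsto\mu_{{\bf m}(v)}(\{v\})$ is flow-invariant, hence a.e.\ constant by ergodicity. This is used to assert that a.e.\ fibre consists of exactly $\kappa$ points, each an atom of weight $1/\kappa$. The Lusin--Novikov section $L$ then satisfies $L_*\FM_{(X',\Gamma')}\le\kappa\cdot\Liouville_{X'}$.
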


Recall that a measurable map between two measure spaces is absolutely continuous if the push-forward of the measure on the domain is absolutely continuous with respect to the measure in the target.

Theorem \ref{sat measure theory} would be totally pointless if the map $\Phi$ were always induced by an isometry of the surface. This is not the case. 

\begin{bei}\label{example separating pairs}
    There are isomelodic hyperbolic marimbas $(X,\Gamma)$ and $(X,\Gamma')$ where $X,X'$ are non-isometric surfaces of genus $2$, and both $\Gamma\subset X$ and $\Gamma'\subset X'$ are non-separating pairs.
\end{bei}

\begin{bei}\label{example separating and non-separating}
    There are isomelodic hyperbolic marimbas $(X,\Gamma)$ and $(X,\Gamma')$ where $X,X'$ are non-isometric surfaces of genus $2$, and where $\Gamma\subset X$ is a separating simple curve and $\Gamma'\subset X'$ is a non-separating one.
\end{bei}

\begin{bei}\label{example non-orientable}
    For every $k\ge 2$ there are $3k-2$ pairwise non-isometric closed connected hyperbolic surfaces $X_0,\dots,X_{3k-3}$ with $\chi(X_i)=2-2k$, and for all $i=0,\dots,3k-3$ a pants decomposition $\Gamma_i\subset X_i$ such that the hyperbolic marimbas
    $$(X_0,\Gamma_0),\dots,(X_{3k-3},\Gamma_{3k-3})$$ 
    are isomelodic.
\end{bei}

All those examples were about pairs of hyperbolic marimbas such that the underlying hyperbolic surfaces have the same volume. Note that we didn't assume that in Theorem \ref{sat measure theory}. Indeed, the said theorem also applies in the following case:

\begin{bei}\label{ex volume}
    There are two isomelodic marimbas $(X,\Gamma)$ and $(X',\Gamma')$ with $\chi(X)\neq\chi(X')$.
\end{bei}

After these examples, the reader might well be wondering if, other than some measure theoretic information, one gets anything at all from the melodies of random geodesics. One does:

\begin{sat}\label{sat finitely many cousins}
    For any orientable hyperbolic marimba $(X_0,\Gamma_0)$ there are finitely many hyperbolic marimbas $(X_1,\Gamma_1),\dots,(X_k,\Gamma_k)$ such that any other orientable hyperbolic marimba $(X',\Gamma')$ with $\chi(X')=\chi(X)$ which is isomelodic to $(X_0,\Gamma_0)$ is isometric to one of the $(X_i,\Gamma_i)$'s.   
\end{sat}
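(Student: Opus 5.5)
We will prove Theorem \ref{sat finitely many cousins} by extracting from the motif frequencies enough geometric data to confine $(X',\Gamma')$ to a compact part of moduli space, and then show that the isomelodic class is discrete there. Fix the Euler characteristic, so the topological types of pairs $(X',\Gamma')$ (orientable surface together with a labeled multicurve, up to homeomorphism) are finitely many. It therefore suffices to work with a fixed topological type and to show that the set of marked structures isomelodic to $(X_0,\Gamma_0)$ projects to a finite subset of the moduli space of pairs.

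\textbf{Step 1: lengths of $\Gamma$ are audible.} Frequencies of length-$0$ motifs, with $\epsilon\to0$ and summed appropriately, recover the rate at which a Liouville-random geodesic hits each component $\eta$. By the Crofton/Santal\'o formula this rate is $\ell(\eta)/(\pi\,\area(X))=\ell(\eta)/(2\pi^2|\chi(X)|)$. Since $\chi$ is fixed, we obtain each $\ell(\eta)$.

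\textbf{Step 2: the geometry of the complementary pieces is audible.} Consider motifs of length $1$ consisting of a note $\eta_0$ followed after time $t_1\in[t,t+\epsilon]$ by $\eta_1$ with nothing in between. Their frequencies, as $\epsilon\to0$, give the density of return-time distributions of the first-hit map. An excursion of length $t$ from $\eta_0$ to $\eta_1$ that stays in a component $Y$ of $X\setminus\Gamma$ corresponds to geodesic arcs in $Y$ orthogonal-ish to the boundary. The key quantity is the infimum of the support of these return times, which is the length of the shortest orthogeodesic in $Y$ between $\eta_0$ and $\eta_1$. Taking also long motifs, the frequencies encode the orthospectrum of each $Y$ relative to its boundary. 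The crucial use I want from this is a lower bound on the systole of $X'$. A closed geodesic $\gamma$ in some $Y$ of length $\delta\ll1$ forces, by the collar lemma, a wide collar in which geodesics spend long times without meeting $\Gamma$. So the return-time distribution has mass at arbitrarily large times with a specific tail behaviour, roughly of size comparable to $\delta$ at times $\approx 2\log(1/\delta)$. Conversely, if $\gamma$ crosses $\Gamma$, then $\Gamma$ has long components, contradicting Step 1. Since these tails are determined by $(X_0,\Gamma_0)$, we obtain a uniform lower bound on the systole of $X'$. Mumford compactness then gives that the isomelodic class lies in a compact set $K$ of moduli space.

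\textbf{Step 3: local rigidity.} We show that the isomelodic class is discrete in $K$, and hence finite. This is the main obstacle. The plan is to show that the melody determines the marked length of each closed geodesic disjoint from $\Gamma$ or crossing it. A closed geodesic corresponds to a periodic word of notes with periodic intervals. By Theorem \ref{sat measure theory}, isomelody gives a measure-class conjugacy of the first return (cross-section) systems to $\Gamma$ and $\Gamma'$ that preserves return times. I would try to upgrade this to a topological conjugacy on the symbolic coding via Livsic-type regularity. Then periodic orbits with their periods correspond, so crossing sequences determine lengths. These are the lengths of curves meeting $\Gamma$ essentially, together with the twist data encoded by the times between crossings. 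Finitely many such length functions give local coordinates on Teichm\"uller space, so the isomelodic set is discrete in $K$. The delicate issue is that the conjugacy need not respect markings, which is exactly why one gets finitely many candidates rather than one. Compactness plus discreteness then gives the finite list $(X_1,\Gamma_1),\dots,(X_k,\Gamma_k)$.
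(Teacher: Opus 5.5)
Your outline has genuine gaps in Steps 2 and 3. The paper reaches the conclusion by a different and much shorter route.

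\textbf{Step 2.} The systole bound does not follow from the tail heuristic. By your own estimate, a closed geodesic of length $\delta$ inside a piece of $X'\setminus\Gamma'$ adds return-time mass of order $\delta$ at times $T\approx 2\log(1/\delta)$, that is, of order $e^{-T/2}$. But the return-time tail $\lambda(\tau>T)$ of the fixed marimba $(X_0,\Gamma_0)$ decays only like $e^{-(1-h)T}$. Here $h<1$ is the largest Hausdorff dimension of the limit sets of the pieces of $X_0\setminus\Gamma_0$, so $1-h$ is the escape rate of the geodesic flow of $X_0\setminus\Gamma_0$ viewed as an open system. This $h$ is close to $1$ when $\Gamma_0$ is short. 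In that case $e^{-(1-h)T}\gg e^{-T/2}$, and nothing you wrote rules out isomelodic $X'_n$ with systoles $\delta_n\to 0$.

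New input is needed here, and the paper takes it from the literature. You correctly note that the melody, with $\chi$ fixed, determines the orthospectrum $\CG(Y)$ of the completion $Y$ of $X\setminus\Gamma$ (Corollary \ref{kor hear orthospectrum}). Le Quellec's theorem then says that the orthospectrum of a compact orientable hyperbolic surface with geodesic boundary determines it up to finitely many possibilities. This gives finitely many candidate complements $Y_1,\dots,Y_r$ directly, with no compactness or local rigidity argument in moduli space.

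\textbf{Step 3.} Theorem \ref{sat measure theory} does not give a conjugacy of flows or of first-return maps. The map $\Phi=L_{(X',\Gamma')}\circ{\bf m}_{(X,\Gamma)}$ comes from a Borel selection of preimages, so $\Phi\circ\rho_t$ and $\rho_t\circ\Phi$ merely have the same melody. In Example \ref{example separating pairs}, $\Phi$ is induced by the identification of $T^1(W\setminus\D W)$ in $X$ and $X'$. Yet the $X'$-geodesic through $\Phi(v)$ is reassembled from $\tau$-translates of the pieces of the $X$-geodesic, so $\Phi$ does not intertwine the flows, and no Livsic-type result upgrades such a map. Even if lengths of closed geodesics crossing $\Gamma'$ were audible, they would come unmarked. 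Discreteness would then need a McKean-type argument (audible lengths lie in a fixed discrete set, so finitely many length functions are locally constant), on top of the missing compactness.

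In the paper, once $Y$ is among finitely many candidates, it only remains to show that each $Y$ admits finitely many gluings, and this is elementary:
\begin{enumerate}
\item Take notes played consecutively with $\gamma$ in the middle. The shortest such 2-step arc, and the lengths of its two halves, are audible.
\item The orthogeodesic representatives of the halves in $Y$ are no longer than the halves, so they range over a finite set.
\item In each such homotopy class there are at most two geodesic arcs orthogonal to $\D Y$ at the outer endpoint with the prescribed length.
\item Hence $\sigma_\gamma$ must send one of at most two points of $\D^-_\gamma Y$ to one of at most two points of $\D^+_\gamma Y$, leaving at most four choices for each of the finitely many pairs.
\end{enumerate}
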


While Theorem \ref{sat finitely many cousins} applies to any hyperbolic marimba $(X,\Gamma)$, things look even rosier if the marimba in question is {\em generic}, by which we mean that $X$ avoids a negligible subset of moduli space. 

\begin{sat}\label{sat generically unique}
    If the underlying hyperbolic surface of an orientable hyperbolic marimba $(X_0,\Gamma_0)$ is generic, then any other orientable marimba $(X',\Gamma')$ with $\chi(X')=\chi(X_0)$ which is isomelodic to $(X_0,\Gamma_0)$  is isometric to $(X_0,\Gamma_0)$.
\end{sat}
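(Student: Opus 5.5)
The approach is to translate isomelodicity into geometric invariants that are rigid for generic surfaces. By Theorem \ref{sat measure theory}, isomelodic marimbas have a full-measure, absolutely continuous, melody-preserving map $\Phi:U\to T^1X'$. Geodesics in $X$ cut into arcs by $\Gamma$ give the return map of the geodesic flow to the cross-section $T^1X|_\Gamma$ (vectors based on $\Gamma$ pointing to one side). The first quantity to recover is the set of periodic melodies together with their lengths. A closed geodesic $\gamma$ of $X$ that meets $\Gamma$ produces a periodic melody with period $\ell_X(\gamma)$. A positive-measure set of random geodesics shadows $\gamma$ for as many periods as we like, within any $\epsilon$. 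So the motif frequencies detect the periodic word (sequence of notes and intersection times) of $\gamma$ up to $\epsilon$, and hence the word exactly, since the set of such words is discrete. The first step, then, is to show that isomelodic marimbas have the same "marked melodic length spectrum": the set of cyclic words in notes, with their time gaps, realized by closed geodesics transverse to $\Gamma$. In particular they have the same set of lengths of closed geodesics meeting $\Gamma$, together with the intersection patterns. I will also record that the $\Gamma$-lengths are recovered: a random geodesic that spirals near $\eta$ for a long time produces a recognizable motif whose spacing is governed by $\ell(\eta)$.

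The second step uses the standard genericity input. For $X$ outside a countable union of proper real-analytic subvarieties of Teichmüller space, the lengths of distinct primitive closed geodesics are rationally independent, or at least pairwise distinct and satisfy no unexpected additive relations. This is the negligible set to be avoided, and I take it as the definition of generic. On $X_0$ the length of a geodesic then determines the geodesic, and relations among lengths reflect honest topology. Concretely, if a closed geodesic $\alpha$ crossing $\Gamma_0$ and the geodesics obtained from it by twisting around a component $\eta$ have lengths lying on the curve predicted by hyperbolic trigonometry, then these lengths determine the Fenchel--Nielsen coordinates along $\eta$. The same applies to curves contained in the complement of $\Gamma$.

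The third step is the main obstacle: reconstructing the topological type of $(X',\Gamma')$ and a marking compatible with $X_0$. By Theorem \ref{sat finitely many cousins} there are only finitely many candidates $(X_1,\Gamma_1),\dots,(X_k,\Gamma_k)$. For each candidate not isometric to $(X_0,\Gamma_0)$, I argue that isomelodicity forces a nontrivial real-analytic equation on the coordinates of $X_0$. The finitely many cousins arise from finitely many combinatorial gluing patterns, as in Examples \ref{example separating pairs}--\ref{example non-orientable}, where the pieces of $X\setminus\Gamma$ are reglued. For each such pattern, isomelodicity requires that the lengths of the cut-open pieces (arcs orthogonal to $\Gamma$ and their returns) coincide. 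In the non-isometric case, it also forces a length coincidence between two distinct geodesics of $X_0$: the geodesic in $X_0$ and the one corresponding to it under the regluing. Generic $X_0$ avoids all of these equations, since there are countably many patterns each cutting out a proper analytic subset. To show that each subset is proper, I would exhibit for each pattern a single surface where the coincidence fails, by pinching or twisting. That leaves only the trivial pattern, which yields an isometry preserving labels.
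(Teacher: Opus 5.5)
There is a genuine gap, and it starts with your genericity input. The property you take as the definition of generic is false on \emph{every} surface, not just unproven. By Horowitz's trace-equivalent words (Randol's theorem that the length spectrum of any closed hyperbolic surface has unbounded multiplicity), every $X$ carries distinct primitive closed geodesics of equal length. So neither pairwise distinctness nor rational independence of lengths holds at any point of Teichm\"uller space, and ``the length of a geodesic determines the geodesic'' cannot be used. Wolpert's method, which the paper adapts, avoids this entirely. It indexes by combinatorial correspondences $\iota$; in the paper these are maps $\iota:\CA_2\to\CA_2$ on $2$-step arcs, whose lengths are heard through the $k$-step orthospectrum (Proposition~\ref{prop hear orthospectrum}), not through periodic melodies. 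It then shows that each locus $\CV(\iota)$ on which $\iota$ can be length-preserving is analytic. By Noetherianity only countably many such loci occur, so a generic surface only admits $\iota$ with $\CV(\iota)$ equal to everything. The remaining work is to prove that every such $\iota$ is geometric.

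Your third step carries the whole proof but is only asserted, and several of its premises fail.

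\emph{The cousins are not regluings.} Theorem~\ref{sat finitely many cousins} does not say the cousins are regluings of $X_0\setminus\Gamma_0$. Its proof only gives that $X'\setminus\Gamma'$ has the same orthospectrum, and Example~\ref{example separating and non-separating} shows the topology can even change. The paper handles this as follows:
\begin{itemize}
\item it reduces to one note by muting;
\item it detects separating versus non-separating from the infima of even and odd gaps (Lemma~\ref{lem recognize separating});
\item it recovers area and orthospectrum of each side;
\item it applies Le Quellec's generic orthospectral rigidity, which has to be re-proved on the non-generic slice $\CT_b(Y_0)$ (Appendix, via Irmak--McCarthy).
\end{itemize}

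\emph{The gluing step is not a countable family of analytic conditions.} Regluings form a continuum $\theta'\in\BR$, not countably many patterns. The condition ``some $\theta'$ works'' is a projection of an analytic set, not itself analytic. The paper needs Lemma~\ref{lem analytic function}, which uses at least two arcs, to eliminate $\theta'$.

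\emph{Properness cannot be shown by pinching one surface.} For the $\iota$ with $\CV(\iota)$ equal to everything, showing that they are harmless is exactly what must be proved. The paper does it in three moves:
\begin{itemize}
\item such $\iota$ preserve arc types $(\alpha,\beta)$;
\item hence the sets of endpoint distances on $\partial^-Y$ agree for $\theta$ and $\theta'$;
\item positively convergent sequences of twisted arcs then force $\theta'\equiv\theta$ modulo $\ell(\Gamma)$, or only modulo $\tfrac12\ell(\Gamma)$ when a side is a one-holed torus.
\end{itemize}
So nontrivial patterns do survive, contrary to ``only the trivial pattern remains,'' though they give isometric marimbas.

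\emph{The per-note isometries must agree.} Finally, you must show the isometries obtained for the individual notes coincide on $\Gamma$. This uses that the isometry group of a generic surface is trivial, or hyperelliptic in genus $2$.
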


\begin{bem}
    Both Theorem \ref{sat finitely many cousins} and Theorem \ref{sat generically unique} fail without the proviso that $\chi(X)=\chi(X')$.
\end{bem}

Theorem \ref{sat measure theory}, Theorem \ref{sat finitely many cousins}, Theorem \ref{sat generically unique}, and Examples \ref{example separating pairs}-\ref{ex volume} are the main results of this paper. 
\medskip

Let us now briefly describe the content of the subsequent sections, profiting at the same time to give some indications on the strategy of the proofs of the results we just mentioned.

In Section \ref{sec ergodic} we recall some basic facts about the geodesic flow of hyperbolic surfaces. In Section \ref{sec definition of melodies} we formally define the {\em melody} ${\bf m}_{(X,\Gamma)}(v)$ of the geodesic associated to a vector $v\in T^1X$ and prove that the frequencies \eqref{eq frequency exists} exist and are basically independent of $v$. We also observe that we can recover from such frequencies the lengths in $X$ of the individual components of $\Gamma$. In Section \ref{sec measure theory} we prove Theorem \ref{sat measure theory}. The basic idea is that (on its image) one can find a measurable inverse for the map which sends $v\in T^1X$ to its melody ${\bf m}_{(X,\Gamma)}(v)$. We derive the existence of such a measurable inverse from classic but somewhat sophisticated results from measure theory like the disintegration theorem and the Lusin-Novikov theorem. In Section \ref{sec examples} we explain Examples \ref{example separating pairs}-\ref{ex volume} and in Section \ref{sec teichmueller theory} we recall some facts about Teichm\"uller theory. The only new result there is Lemma \ref{lem analytic function}, a result asserting that the image of certain specific analytic map is an analytic set. In Section \ref{sec hear orthospectrum} we prove that the melody ${\bf m}_{(X,\Gamma)}(v)$ determines what we call the {\em $k$-step orthospectrum} of $(X,\Gamma)$. For example, the $1$-step orthospectrum is nothing other than the classical orthospectrum of $X\setminus\Gamma$. In Section \ref{sec finitely many cousins} we will deduce Theorem \ref{sat finitely many cousins} from this observation, together with a recent finiteness result of Le Quellec \cite{Nolwenn} for the set of compact surfaces with boundary which share the same orthospectrum. In the same paper she also proved that generically this set is reduced to a point. This result, together with a small extension that we discuss in the Appendix, plays a role in the proof of Theorem \ref{sat generically unique}. However, there is still enough to do for us to distribute the argument over the two final sections of this paper. The basic idea of the proof of Theorem \ref{sat generically unique} is to adapt--as also Le Quellec does--the argument that Wolpert \cite{Wolpert} used to prove that generically the unmarked length spectrum determines the metric of a hyperbolic surface.

\subsection*{App and website}
What lead us to this paper was the idea that one could associate a melody to geodesics in a hyperbolic surface $X$ decorated with a multicurve $\Gamma$, and the desire to understand what one would actually hear out of such melodies. Being able to actually listen was from the beginning an integral part of this project. We ended up with an app able to handle extremely long geodesics in relatively short time (length 10.000.000 in less than 1 minute) and from which we were able to get statistically relevant information. In fact, the output of the app helped us understand and solidify in our minds some of the phenomena we exploit to formally prove the theorems stated above. The original app, written in Matlab, is available at \cite{github_repo}, and we plan to put a permanent version on Zenodo before the publication of the paper.

While the app does what we wanted it to do, it is maybe less user-friendly than one would wish, and it is not completely clear how one would make it more so. This is why we decided to also make public the website \href{https://ludox73.github.io/HyperMarimba/story.html}{{\em HyperMarimba}} \cite{webpage} integrating the app and some explanations about the relation between what is being displayed and what we do in this paper. Should the reader think that they want to read past this point, it might be a good idea for them to spend 5 minutes on that website to get some intuition on what is going on.

To conclude we should point out that AI, and more specifically Anthropic's Claude~\cite{AnthropicClaude}, was used to translate the app from Matlab to JavaScript, so that the processes could be run on a browser.

\subsection*{Acknowledgements:} The first author wishes to thank the Max-Planck-Institut f\"ur Mathematik for its hospitality, together with all the people that made this possible. The second author wishes to thank his nephews Cloe and Marco, and his brother in law, Christian Farroni, solist of the {\em Barcelona Symphony Orchestra - the National Orchestra of Catalonia}. This paper would have been unthinkable without their musical expertise.

\section{Preliminaries on the geodesic flow}\label{sec ergodic}
In this section we recall a few facts about dynamics of the geodesic flow on a closed connected hyperbolic surface $X$. We refer to \cite{Santalo, Paternain}, and specially to \cite{Bekka} for background on this topic.

\subsection{The kinematic measure and Santal\'o's formula}
Consistently with our notation above, let
$$\rho^X:\BR\times T^1X\to T^1X,\ (t, v) \mapsto\rho^X_v(t)$$
be the geodesic flow of the hyperbolic surface $X$. As it is the case for every Riemannian manifold, the geodesic flow $\rho^X$ preserves a preferred measure $\kinematic_X$ in the Lebesgue class. This measure, called {\em kinematic measure}, can be obtained from the canonical measure on the cotangent bundle of the smooth manifold $TX$--when one does this, the invariance under the geodesic flow is Liouville's theorem from classical (Hamiltonian) mechanics. The kinematic measure $\kinematic_X$ can also be seen as the Riemannian measure of the restriction to $T^1X$ of the Sasaki metric \cite{Sasaki} on $TX$. Anyways, from either point of view one gets that the kinematic measure is the one satisfying 
$$\int_{T^1X} f(v)\ d\kinematic_X(v)=\int_X\left(\int_{T^1_xX}f(\theta)\ d\theta\right)\ dx$$
for any measurable function $f$ on $T^1X$. Here $dx$ stands for integrating with respect to the Riemannian measure on $X$ and $d\theta$ stands for integrating over the circle $T^1_xX$ parametrized to have length $2\pi$.

If $Y$ is a hyperbolic surface with boundary then the geodesic flow $\rho^Y$ is only defined on a subset of $\BR\times T^1X$, basically because when an orbit hits the boundary it makes no sense to continue it--think of Wile E. Coyote running over the edge of a cliff. For $v\in T^1Y$ denote by $\tau^Y(v)\in[0,\infty]$ the maximal time that the orbit $t\mapsto\rho^Y_v(t)$ is defined. Note that $\tau^Y(v)=0$ if $v$ is based on a boundary point and points outside. The map $\tau^Y$ takes the value $\infty$ in a closed set of vanishing measure, and it is analytic on the open set $(\tau^Y)^{-1}(0,\infty)$.

Still assuming that $Y$ has boundary, note that we can parametrize the set 
$$T^1_+\D Y=\{v\in T^1Y\vert_\Gamma\text{ with }\tau^Y(v)\ge 0\}$$
of inward pointing vectors based on the boundary by $\D Y\times[0,\pi]$. In particular we have preferred coordinates $(x,\theta)$ on $T^1_+\D Y$. The volume form $dx\wedge d\theta$ yields thus a canonical measure on $T^1_+\D Y$. Santal\'o's\footnote{Lu\'is Santal\'o was a Spanish mathematician who, like half a million others, left Spain because of the civil war. He was welcome in Argentina. Many others went to rebuild their lives in France or elsewhere in Latin America. This is the kind of thing that should not be forgotten.} formula \cite{Santalo, Berger} relates this measure to the kinematic measure on $T^1Y$. It asserts that we have
    $$\int_{T^1Y}f(v)\ d\kinematic_X(v)=\int_{T^1_+\D Y}\left(\int_0^{\tau(x,\theta)}f(\rho^Y_{(x,\theta)}(t))\ dt\right)\vert\sin(\theta)\vert\ d\theta dx$$
for every integrable function $f$ on $T^1Y$.

In our setting we will be working with closed surfaces $X$ decorated with a simple multicurve $\Gamma$. For $v\in T^1X$ denote by 
$$\tau^{(X,\Gamma)}(v)=\inf\{t>0\text{ such that }\rho_v^X(t)\text{ is based on }\Gamma\}$$
the first positive time (if any) that the geodesic flow orbit of $v$ hits $\Gamma$ and identify the restriction $T^1X\vert_\Gamma$ to $\Gamma$ of the unit tangent bundle of $X$ with $\Gamma\times\BS^1$, where the circle is once again parametrized by $[0,2\pi]$ in such a way that $0,\pi$ are tangent to $\Gamma$. Applying Santal\'o's formula to the metric completion of $X\setminus\Gamma$ we get
\begin{equation}\label{eq practical Santalo}
    \int_{T^1X}f(v)\ d\kinematic_X(v)=\int_{\Gamma}\left[\int_0^{2\pi}\left(\int_0^{\tau^{(X,\Gamma)}(x,\theta)}f(\rho^X_{(x,\theta)}(t))\ dt\right)\vert\sin(\theta)\vert\ d\theta\right] dx
\end{equation}
for every integrable function $f$ on $T^1X$.

\subsection{Ergodicity}
Recall now that, since $X$ is a closed hyperbolic surface, the geodesic flow $\rho^X$ does not only preserve the kinematic measure $\kinematic_X$, but that it is also ergodic: every measurable $\rho^X$-invariant subset of $T^1X$ has either zero or full $\kinematic_X$-measure. Now, when invoking the ergodicity of the geodesic flow, it turns out to be more convenient to replace the kinematic measure by the associated probability measure
$$\Liouville_X=\frac 1{4\pi^2\vert\chi(X)\vert}\kinematic_X.$$
For example, the ergodic theorem \cite{Bekka} asserts now that the $\Liouville_X$-integral of an integrable function $f$ on $T^1X$ can be computed by taking the integral over the orbit of $v$ for almost every $v\in T^1X$. Now, noting that the space of (automatically compactly supported) continuous functions on $T^1X$ is separable--that is, has a countable dense set--, we get that if we restrict ourselves to continuous functions then we can exchange the order of the quantifiers in the ergodic theorem, getting thus that {\em for almost every $v\in T^1X$ we have}
\begin{equation}\label{eq random vector}
    \int_{T^1X}f(v)\ d\Liouville_X(v)=\lim_{T\to\pm\infty}\frac 1T\int_0^Tf(\rho^X_v(t))\ dt\text{ for all }f\in  C^0(T^1X).
\end{equation}
For economy of language, let us introduce some terminology:

\begin{defi}
A vector $v\in T^1X$ is a {\em random vector} if \eqref{eq random vector} holds. We denote by 
\begin{equation}
\CR(X)=\{v\in T^1X\text{ satisfying \eqref{eq random vector}}\}
\end{equation}
the measurable set of random vectors in $X$.     
\end{defi}

We stress that $\Liouville_X(\CR(X))=1$.

Rather than integrating continuous functions, one sometimes wants to calculate the measure of open sets. From the Portmanteau theorem we get that if $U\subset T^1X$ is open with $\Liouville_X(\overline U\setminus U)=0$ where $\overline U$ is the closure of $U$ then we have
$$\Liouville_X(U)=\lim_{T\to\infty}\frac 1T\int_0^T\chi_U(\rho^X_v(t))\ dt$$
for every random vector $v\in\CR(X)$. Here, $\chi_U$ is the characteristic function of $U$.

We will apply this next to a particular kind of open sets, but first we need a bit of notation. Recalling that we have identified $T^1X\vert_\Gamma\simeq\Gamma\times\BS^1\simeq\Gamma\times[0,2\pi]$, let
$$T^1_X\vert_\Gamma^*=\Gamma\times((0,\pi)\cup(\pi,2\pi))$$
be the set of vectors based on $\Gamma$ which are not tangent to $\Gamma$. Let also
\begin{equation}
    \tau^{(X,\Gamma)}_{\min}=\min\{\tau^{(X,\Gamma)}(v)\text{ with }v\in T^1X\vert_\Gamma^*\}
\end{equation}
be the length of the shortest essential geodesic arc with end points in $\Gamma$. We will say that $\epsilon>0$ is {\em small} if $\epsilon<\tau^{(X,\Gamma)}_{\min}$. The point of having this terminology in place is that we get that for every $V\subset T^1_X\vert_\Gamma^*$ open and every $\epsilon>0$ small, the map
\begin{equation}\label{eq push via geod flow}
    (0,\epsilon)\times V\to T^1X,\ (t,v)\mapsto\rho_v(t)
\end{equation}
is a homeomorphism onto an open set $U(V,\epsilon)$. 

\begin{lem}\label{lem distribution of vectors over time}
    For every $V\subset T^1X\vert_\Gamma^*$ open with $\int_{\overline V}1d\theta dx=\int_V1d\theta dx$ we have 
    $$\frac 1{4\pi^2\vert\chi(X)\vert}\int_V\vert\sin(\theta)\vert\ d\theta dx=\lim_{T\to\infty}\frac 1T\cdot\vert\{t\in[0,T]\text{ with }\rho_v(t)\in V\}\vert$$
    for every random vector $v\in\CR(X)$.
\end{lem}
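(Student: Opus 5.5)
The set $V$ has measure zero in $T^1X$, so the quantity $\vert\{t\in[0,T]\text{ with }\rho_v(t)\in V\}\vert$ is a counting of hitting times. The plan is to thicken $V$ by flowing for a short time, apply the Portmanteau form of the ergodic theorem to the thickened open set, compute its Liouville measure with Santal\'o's formula \eqref{eq practical Santalo}, and then divide by the thickness.

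\emph{Step 1: setup and counting.} Fix a small $\epsilon>0$, that is $\epsilon<\tau^{(X,\Gamma)}_{\min}$. Let $U=U(V,\epsilon)$ be the open image of the homeomorphism \eqref{eq push via geod flow}. Write $N_T(v)$ for the number of $t\in[0,T]$ with $\rho_v(t)\in V$.

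Each such hitting time $t$ contributes the full interval $(t,t+\epsilon)$ to $\{s:\rho_v(s)\in U\}$. Since $\epsilon<\tau_{\min}$, these intervals are pairwise disjoint. Conversely, every $s$ with $\rho_v(s)\in U$ lies in exactly one such interval, by injectivity of \eqref{eq push via geod flow}. Comparing with the window $[0,T]$ then gives
$$\epsilon N_T(v)-\epsilon\le\int_0^T\chi_U(\rho_v(s))\,ds\le \epsilon N_T(v)+\epsilon.$$
The error comes only from the interval straddling $T$ and the one entering at time $0$.

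\emph{Step 2: the measure of $U$.} Apply \eqref{eq practical Santalo} to $f=\chi_U$. For a starting vector $(x,\theta)\in V$, the orbit stays in $U$ exactly for $t\in(0,\epsilon)$, so the inner integral equals $\epsilon$. For a starting vector in $T^1X\vert_\Gamma\setminus V$, the orbit segment up to the next hit of $\Gamma$ never meets $U$, again by injectivity. This gives
$$\Liouville_X(U)=\frac{\epsilon}{4\pi^2\vert\chi(X)\vert}\int_V\vert\sin\theta\vert\,d\theta\,dx.$$

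\emph{Step 3: Portmanteau and conclusion.} To use the Portmanteau statement for random vectors $v\in\CR(X)$, I need $\Liouville_X(\overline U\setminus U)=0$. This is where I expect the only real work.

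The boundary of $U$ is contained in $\rho([0,\epsilon]\times\partial V)\cup\rho(\{0,\epsilon\}\times\overline V)$, where $\partial V$ and $\overline V$ are taken in $T^1X\vert_\Gamma$. The pieces $\rho(\{0,\epsilon\}\times\overline V)$ are contained in $2$-dimensional sets, hence are null in the $3$-manifold $T^1X$. For $\rho([0,\epsilon]\times\partial V)$, Santal\'o's formula bounds its measure by $\epsilon\int_{\partial V}\vert\sin\theta\vert\,d\theta\,dx$. This vanishes because the hypothesis $\int_{\overline V}1=\int_V1$ says $\partial V$ is $d\theta\,dx$-null.

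One care point: $\overline V$ may contain vectors tangent to $\Gamma$ ($\theta=0,\pi$), where \eqref{eq push via geod flow} is no longer a homeomorphism. Their flow-out for times in $[0,\epsilon]$ is contained in a $2$-dimensional set (it is a subset of $\rho([0,\epsilon]\times\Gamma\times\{0,\pi\})$), hence null. Alternatively, the factor $\vert\sin\theta\vert$ vanishes there, so they carry no weight in the Santal\'o bound.

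Granting this, the Portmanteau consequence of \eqref{eq random vector} gives
$$\frac1T\int_0^T\chi_U(\rho_v(s))\,ds\to\Liouville_X(U).$$
Dividing the sandwich from Step 1 by $\epsilon T$ yields $\lim N_T(v)/T=\Liouville_X(U)/\epsilon$, since the $\pm\epsilon/T$ errors vanish. By Step 2 this limit is exactly the claimed quantity.
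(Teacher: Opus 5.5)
Your proof is correct and follows the same route as the paper. You thicken $V$ to $U(V,\epsilon)$ via the flow, apply the Portmanteau form of the ergodic theorem for random vectors, compute $\Liouville_X(U(V,\epsilon))=\frac{\epsilon}{4\pi^2\vert\chi(X)\vert}\int_V\vert\sin\theta\vert\,d\theta\,dx$ with Santal\'o's formula, and divide by $\epsilon$, additionally spelling out the $\pm\epsilon$ sandwich and the nullity of $\overline{U}\setminus U$ that the paper only asserts.
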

\begin{proof}
    The assumption $\int_{\overline V}1d\theta dx=\int_V1d\theta dx$ implies that, for small $\epsilon>0$, the boundary of the image $U(V,\epsilon)$ of \eqref{eq push via geod flow} has vanishing measure. We get thus from the ergodic theorem that 
    \begin{align*}
        \Liouville_X(U(V,\epsilon))
        &=\lim_{T\to\infty}\frac 1T\int_0^T\chi_{U(V,\epsilon)}(\rho^X_v(t))\ dt\\
        &=\lim_{T\to\infty}\frac 1T\cdot\epsilon\cdot\vert\{t\in[0,T]\text{ with }\rho_v(t)\in V\}\vert.
    \end{align*}
    On the other hand, we get from Santal\'o's formula that 
    \begin{align*}
        \Liouville_X(U(V,\epsilon))
        &=\frac 1{4\pi^2\vert\chi(X)\vert}\kinematic_X(U(V,\epsilon))\\
        &=\frac 1{4\pi^2\vert\chi(X)\vert}\int_V\epsilon\cdot\vert\sin(\theta)\vert\ d\theta dx.
    \end{align*}
    The claim follows from this two equalities.
\end{proof}

When we apply this to the subset $T_X^1\vert_\gamma^*$ of $T^1_X\vert_\Gamma^*$ consisting of vectors based in some component $\gamma$ of $\Gamma$ we get that for every $v\in\CR(X)$ we have 
\begin{equation}\label{eq read length out of hits}
    \begin{split}
        \lim_{T\to\infty}\frac 1T\vert\{t\in[0,T]\text{ with }\rho^X_v(t)\text{ based on }\gamma\}\vert
    &=\frac 1{4\pi^2\vert\chi(X)\vert}\int_{T_X^1\vert_\gamma^*}\vert\sin(\theta)\vert\ d\theta dx\\
    &=\frac 1{\pi^2\vert\chi(X)\vert}\ell_X(\gamma)
    \end{split}
\end{equation}
where $\ell_X(\gamma)$ is the length in $X$ of the geodesic $\gamma$. In particular, when we denote by 
\begin{equation}\label{eq note counting}
  N_\Gamma(v,T)=\vert\{t\in[0,T]\text{ with }\rho^X_v(t)\text{ based on }\Gamma\}\vert  
\end{equation}
we get for every $v\in\CR(X)$ that
\begin{equation}\label{eq note counting proportional to time}
  N_\Gamma(v,T)\sim \frac 1{\pi^2\vert\chi(X)\vert}\ell_X(\Gamma)\cdot T  
\end{equation}
where $\sim$ means that the ratio between both sides tends to $1$ as $T\to\infty$.

\section{Melodies and frequencies of motifs}\label{sec definition of melodies}
In this section we formalize what we understand by the {\em melody} of a geodesic in a hyperbolic marimba, prove the frequencies \eqref{eq frequency exists} exist, and observe that we can read out of these frequencies the lengths of the individual components of $\Gamma$. All this just needs a little bit of ergodic theory. Let us however start with some terminology: under a {\em topological marimba} we understand a pair $(X,\Gamma)$ where $X$ is a closed connected surface and $\Gamma$ is a simple multicurve, that is a finite union of disjoint, simple, homotopically essential curves which are pairwise not homotopic to each other. If the surface $X$ is endowed with some extra structure, like for example being oriented or hyperbolic, then we say that $(X,\Gamma)$ is an oriented or hyperbolic marimba. If the marimba $(X,\Gamma)$ is hyperbolic, we will assume that $\Gamma$ is geodesic.

\begin{bem}
Although it is basically not used, we remind the reader that we are just considering closed marimbas $(X,\Gamma)$. We also stress that the underlying surface $X$ is assumed to be connected.
\end{bem}

Suppose that $(X,\Gamma)$ is a hyperbolic marimba and let $v\in\CR(X)$ be a random unit tangent vector--recall that this means that $v$ satisfies \eqref{eq random vector}. Noting that the geodesic $t\mapsto\rho^X_v(t)$ meets $\Gamma$ infinitely often, we get a well-defined bi-infinite sequence 
$$\dots<t_{-2}^\Gamma(v)<t_{-1}^\Gamma(v)<t_{0}^\Gamma(v)<t_1^\Gamma(v)<t_2^\Gamma(v)<\dots$$
with $0\in(t_{-1}^\Gamma(v),t_{0}^\Gamma(v)]$ and such that $\{t\in\BR,\ \rho_v(t)\in\Gamma\}=\{t_i^\Gamma(v),i\in\BZ\}$. Moreover, abusing terminology and denoting also by the same symbol the collection of curves $\Gamma$ and the set of its individual components, we also get a bi-infinite sequence $(\kappa_i^\Gamma(v))_{i\in\BZ}\in\Gamma^\BZ$ with 
$$\rho_v(t_i^\Gamma(v))\text{ based at }\kappa_i^\Gamma(v)\in\Gamma.$$
In this way we get a continuous map
\begin{equation}\label{eq partiture function}
{\bf m}_{(X,\Gamma)}:\CR(X)\to(\Gamma\times\BR)^\BZ,\ v\mapsto(\kappa_i^\Gamma(v),t_{i}^\Gamma(v))_i.
\end{equation}
In musical terms, we think of ${\bf m}_{(X,\Gamma)}(v)$ as a score that tells us that at time $t_i^\Gamma(v)$ we have to play the key $\kappa_i^\Gamma(v)$. 

\begin{defi}
Let $(X,\Gamma)$ be a hyperbolic marimba. The sequence ${\bf m}_{(X,\Gamma)}(v)$ is the {\em melody of the random vector $v\in\CR(X)$}. The projection ${\bf m}^+_{(X,\Gamma)}(v)$ of ${\bf m}_{(X,\Gamma)}(v)$ to $(\Gamma\times\BR)^\BN$ is the {\em positive melody of $v$}.
\end{defi}

\begin{bem}
    What one can listen to in \cite{webpage} is the positive melody.
\end{bem}

The following result basically asserts that, locally, $v$ is determined by its melody ${\bf m}(v)$:

\begin{lem}\label{lem discreteness of music}
    The map ${\bf m}_{(X,\Gamma)}:\CR(X)\to(\Gamma\times\BR)^\BZ$ is finite-to-one.
\end{lem}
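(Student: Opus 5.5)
The plan is to show that each fiber of ${\bf m}_{(X,\Gamma)}$ is a discrete subset of $T^1X$ in a uniform sense: two \emph{distinct} random vectors with the same melody cannot be arbitrarily close. Finiteness then follows from compactness of $T^1X$. If a fiber were infinite, we could extract a sequence $w_n$ of pairwise distinct random vectors with the same melody, converging in $T^1X$. The limit need not be random, but we never need it to be: it suffices to show that two random vectors $w,w'$ with the same melody which are close enough in $T^1X$ must coincide, and then apply this to $w_n,w_m$ for $n,m$ large.

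So fix random $w,w'$ with ${\bf m}(w)={\bf m}(w')$. A random geodesic is never tangent to $\Gamma$, since otherwise it would coincide with a component of $\Gamma$. Hence every crossing of $\Gamma$ is transverse, and the hitting data depend continuously on the vector. Lift to $\BH^2$, choosing lifts $\tilde w,\tilde w'$ that are close.

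The first step is to show that the two lifted geodesics cross exactly the same bi-infinite sequence of lifts $\tilde\kappa_i$ of components of $\Gamma$. Suppose at the $i$-th hit the two geodesics are at nearby points of the same lift $\tilde\kappa_i$ with nearby directions. The next crossing occurs at the same time $t_{i+1}-t_i$ for both and on a lift of the same component $\kappa_{i+1}$. Lifts of $\Gamma$ are uniformly separated by $\tau^{(X,\Gamma)}_{\min}$, so I want to argue that both crossings must lie on the same lift $\tilde\kappa_{i+1}$, with the new positions and directions again close. The issue is that the exponential divergence of geodesics in $\BH^2$ destroys any a priori closeness after a long time.

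To get around this I would use the equality of times at \emph{every} crossing rather than only the initial closeness. This is the step I expect to be the main obstacle. One way to set it up is to consider the function $d(t)$, the distance between the two lifted geodesics at time $t$, which is convex. Inductively, $d(t_i)$ is controlled whenever the two geodesics hit the same lift at the same time. I would try first to show that $d$ stays below $\tau^{(X,\Gamma)}_{\min}/2$ on $[t_i,t_{i+1}]$. The alternative, in which the geodesics leave the region where the lifts are forced to agree, should contradict the fact that both geodesics hit the complete geodesics $\tilde\kappa_j$ at exactly the same times. If this direct convexity argument fails, the fallback is to prove that the fiber is discrete locally near each hit, using the same-lift structure only on compact time windows, and then to run the nesting argument below.

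Once the same sequence of lifts is crossed, I conclude as follows. Each $\tilde\kappa_i$ bounds a half-plane $H_i$, and the half-planes crossed in the forward direction are nested. Since a random geodesic crosses infinitely many lifts in both directions, and these lifts are pairwise at distance at least $\tau^{(X,\Gamma)}_{\min}$, the closures of the $H_i$ at infinity shrink to single points. Both lifted geodesics therefore have the same forward and backward endpoints, so they are the same geodesic. They pass through $\tilde\kappa_0$ at the same time $t_0$, so their parametrizations agree, and hence $w=w'$. This gives the required uniform separation, and therefore finiteness of the fibers.
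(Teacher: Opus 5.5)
\textbf{There is a genuine gap, at exactly the step you flag.} Your compactness reduction is fine. So is the remark that random geodesics cross $\Gamma$ transversally. The nested half-plane conclusion is also correct: if two lifted geodesics cross the same lifts $\tilde\kappa_i$ at the same times $t_i$ for every $i\in\BZ$, they coincide. But proving that close vectors with the same melody cross the same lifts for all $i$ is not a technicality. It is the entire content of the lemma, and convexity of $d(t)$ cannot supply it.

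If $\tilde w\neq\tilde w'$, then $d(t)$ is convex and unbounded, so it will exceed $\tau^{(X,\Gamma)}_{\min}/2$. Saying that this ``should contradict'' equality of all crossing times just restates what is to be proved. An induction from one crossing to the next also cannot work. Knowing that both geodesics leave the same lift $\tilde\kappa_i$ at time $t_i$ and reach a lift of $\kappa_{i+1}$ at time $t_{i+1}$ imposes no rigidity. Geodesic segments of fixed length joining two fixed disjoint geodesics of $\BH^2$ form a one-parameter family. So the only way to show the next lift is the same is a priori closeness at time $t_{i+1}$. That bound loses a factor of order $e^{t_{i+1}-t_i}$ at each step, and the gaps $t_{i+1}-t_i$ along a random geodesic are unbounded. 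Rigidity only appears when you impose the incoming and outgoing segments at a crossing simultaneously, i.e.\ three consecutive crossings, and then you need a genuine non-degeneracy statement.

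Your fallback is circular. Local discreteness of the fiber is the lemma itself. Information on compact time windows never gives agreement of \emph{all} lifts, which is what your nesting argument needs.

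\textbf{The missing idea is the paper's key observation.} Fix disjoint geodesics $\gamma_-,\gamma_0,\gamma_+\subset\BH^2$ and $s,t>0$. The set of $u\in T^1\BH^2|_{\gamma_0}$ with $\rho_u(-s)\in\gamma_-$ and $\rho_u(t)\in\gamma_+$ is finite. It is the intersection of two distinct one-dimensional analytic subsets of the cylinder $T^1\BH^2|_{\gamma_0}$.

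With this in hand, your compactness framework closes immediately and the nesting argument becomes unnecessary. Take lifts $\tilde w_n\to\tilde w_\infty$ of pairwise distinct elements of a fiber. Continuity of the flow at the fixed times $t_{-1},t_0,t_1$, together with local finiteness of the lifts of $\Gamma$, forces $\rho_{\tilde w_n}(t_0)$ to lie, for large $n$, in one such finite set. Being convergent, the sequence is then eventually constant, which contradicts distinctness.
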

\begin{proof}
    It suffices to prove that for any three disjoint geodesics $\gamma_{-},\gamma_0,\gamma_{+}\subset\BH^2$, and for any two $s,t>0$ the set
    $$A=\{v\in T^1\BH^2\vert_{\gamma_0}\text{ with }\rho_v^{\BH^2}(-s)\in\gamma_{-}\text{ and }\rho_v(t)\in\gamma_{+}\}$$
    is finite. Here $T^1\BH^2\vert_{\gamma_0}\simeq\gamma_0\times\BS^1$ is the restriction of the unit tangent bundle of $\BH^2$ to $\gamma_0$. The key observation is that both sets $\{v\in T^1\BH^2\vert_{\gamma_0}\text{ with }\rho_v^{\BH^2}(-s)\in\gamma_{-}\}$ and $\{v\in T^1\BH^2\vert_{\gamma_0}\text{ with }\rho_v^{\BH^2}(t)\in\gamma_{+}\}$ are irreducible proper analytic subsets of the cylinder $T^1\BH^2\vert_{\gamma_0}$, and that, unless they are both empty or reduced to a point, they are distinct. It follows that their intersection is finite, as claimed. 
\end{proof}

On the space $(\Gamma\times\BR)^\BZ$ we have the flow, 
\begin{align*}
\sigma:\BR\times (\Gamma\times\BR)^\BZ&\to (\Gamma\times\BR)^\BZ\\
(s,(\kappa_i,t_i)_i)&\mapsto\sigma_s((\kappa_i,t_i)_i)=(\kappa_{i+\mu(s,(t_i)_i))},t_{i+\mu(s,(t_i)_i))}-s),
\end{align*}
where $\mu(s,(t_i)_i)=j\in\BZ$ if $s\in(t_{j-1},t_j]$. This flow, of which we think as a shift, is related to the geodesic flow. In fact, the map ${\bf m}_{(X,\Gamma)}$ is a semi-conjugacy onto its image:
$${\bf m}_{(X,\Gamma)}(\rho_s(v))=\sigma_s({\bf m}_{(X,\Gamma)}(v)).$$
In particular, it follows from the invariance of the probability measure $\Liouville_X$ under the geodesic flow that the flow $\sigma$ preserves its push-forward
$$\FM_{(X,\Gamma)}=({\bf m}_{(X,\Gamma)})_*(\Liouville_X).$$ 
The measure $\FM_{(X,\Gamma)}$ is a probability measure because $\Liouville_X(\CR(X))=1$. Moreover, the ergodicity of $\Liouville_X$ implies that the measure $\FM_{(X,\Gamma)}$ is ergodic as well. Let us record these facts.

\begin{lem}
    The measure $\FM_{(X,\Gamma)}$ on $(\Gamma\times\BR)^\BZ$ is an invariant ergodic probability measure for the flow $\sigma$.\qed
\end{lem}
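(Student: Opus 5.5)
The plan is to push forward the three properties (probability measure, invariance, ergodicity) from $\Liouville_X$ through the semi-conjugacy ${\bf m}_{(X,\Gamma)}$. Before that, I will check that $\FM_{(X,\Gamma)}$ is a well-defined Borel measure. Equip $(\Gamma\times\BR)^\BZ$ with the product topology, which makes it a Polish space. By \eqref{eq partiture function}, ${\bf m}_{(X,\Gamma)}$ is continuous on the measurable set $\CR(X)$ of full $\Liouville_X$-measure. Hence it is Borel measurable, and $\FM_{(X,\Gamma)}(B)=\Liouville_X({\bf m}_{(X,\Gamma)}^{-1}(B))$ defines a Borel measure. Its total mass is $\Liouville_X(\CR(X))=1$.

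Next, I will check that $\sigma$ is a measurable flow and that $\CR(X)$ is invariant under the geodesic flow. The latter holds because condition \eqref{eq random vector} involves Cesàro limits, which are unchanged when the starting point of the orbit is shifted by a fixed time $s$. The map $\sigma_s$ is Borel measurable: the index $\mu(s,(t_i)_i)$ is determined by countably many Borel conditions $s\in(t_{j-1},t_j]$, and on each such piece $\sigma_s$ is a coordinate shift composed with a translation. With this in place, the semi-conjugacy ${\bf m}_{(X,\Gamma)}\circ\rho_s=\sigma_s\circ{\bf m}_{(X,\Gamma)}$ on $\CR(X)$ follows from the definitions. Then
$$(\sigma_s)_*\FM_{(X,\Gamma)}=(\sigma_s\circ{\bf m}_{(X,\Gamma)})_*\Liouville_X=({\bf m}_{(X,\Gamma)}\circ\rho_s)_*\Liouville_X=({\bf m}_{(X,\Gamma)})_*\Liouville_X,$$
using the $\rho$-invariance of $\Liouville_X$ in the last step.

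For ergodicity, let $B\subset(\Gamma\times\BR)^\BZ$ be a $\sigma$-invariant Borel set. Then $A={\bf m}_{(X,\Gamma)}^{-1}(B)\subset\CR(X)$ satisfies $\rho_s^{-1}(A)=A$ for every $s$, again by the semi-conjugacy and the invariance of $\CR(X)$. The ergodicity of $\Liouville_X$ therefore gives $\Liouville_X(A)\in\{0,1\}$, which means $\FM_{(X,\Gamma)}(B)\in\{0,1\}$. If one uses the definition of ergodicity with almost-invariant sets instead, the same argument applies, since pullback preserves null symmetric differences.

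None of these steps is a real obstacle. The only point needing care is the measurability bookkeeping: $\sigma_s$ must be measurable and $\CR(X)$ must be flow-invariant, so that the pushforward identities make sense.
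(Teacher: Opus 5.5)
Your proposal is correct and takes the same route as the paper: push total mass, flow-invariance and ergodicity of $\Liouville_X$ forward through the semi-conjugacy ${\bf m}_{(X,\Gamma)}\circ\rho_s=\sigma_s\circ{\bf m}_{(X,\Gamma)}$, with your measurability checks (Borelness of $\sigma_s$, flow-invariance of $\CR(X)$) filling in details the paper leaves implicit. One small caveat is that ${\bf m}_{(X,\Gamma)}$ is not literally continuous at vectors based on $\Gamma$, since the normalization $0\in(t_{-1},t_0]$ makes the indexing jump there; it is continuous off that null set and Borel, which is all your argument uses.
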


\begin{bem}
    It would be interesting to calculate the entropy of the measure $\FM_{(X,\Gamma)}$, to evaluate how much information is lost when we pass from vectors $v\in\CR(X)$ to their melodies ${\bf m}_{(X,\Gamma)}(v)$.
\end{bem}

For every motif $[(\eta_i)_{i=0,\dots,k},(t_i)_{i=1,\dots,k},\epsilon]$ and every $\delta\in(0,\tau^{(X,\Gamma)}_{\min})$ positive and small, we have the open set 
$$U_{[(\eta_i)_i,(t_i)_i,\epsilon]}(\delta)=
\left\{v\in\CR(X)\middle\vert\begin{array}{l}\text{there are }t\in(0,\delta)\text{ and }0=s_0<s_1<s_2<\dots<s_k\\
\text{with }s_i\in(t_i,t_i+\epsilon)\text{ for }i=1,\dots,k,\\
\rho^X_v(t+s_i)\in\eta_i\text{ for }i=0,\dots,k\text{, and}\\
\{0,\dots,s_k\}=\{s\in[0,s_k]\text{ with }\rho_v^X(s+t)\in\Gamma\}.
\end{array}\right\}$$
consisting of those $v\in T^1X$ such that the geodesic $\rho^X_v$ plays the motif $[(\eta_i)_i,(t_i)_i,\epsilon]$ at some time $t\in(0,\delta)$. The image of $U_{[(\eta_i)_i,(t_i)_i,\epsilon]}(\delta)$ under ${\bf m}_{(X,\Gamma)}$ is the intersection of ${\bf m}_{(X,\Gamma)}(\CR(X))$ with the "cylinder" set 
$$V_{[(\eta_i)_i,(t_i)_i,\epsilon]}(\delta)=\left\{(\kappa_i,\hat t_i)_i\in(\Gamma\times\BR)^{\BZ}\middle\vert
\begin{array}{l}\kappa_i=\eta_i\text{ for }i=0,\dots,k\text{ and such that}\\
\text{there are }t\in(0,\delta)\text{ and }0=s_0<s_1<s_2<\dots<s_k\\
\text{with }s_i\in(t_i,t_i+\epsilon)\text{ for }i=1,\dots,k,\\
\text{and with }\hat t_i=t+s_i\text{ for }i=0,\dots,k.
\end{array}\right\}$$
Since $V_{[(\eta_i)_i,(t_i)_i,\epsilon]}(\delta)$ is the pre-image of an open set in $(\Gamma\times\BR)^{\{0,\dots,k\}}$ under the projection
$$(\Gamma\times\BR)^{\BZ}\to(\Gamma\times\BR)^{\{0,\dots,k\}},$$
it is open. Indeed, when we consider all motifs $[(\eta_i)_{i=0,\dots,k},(t_i)_{i=1,\dots,k},\epsilon]$, all $\delta>0$, and all $s\in\BR$, we get that the sets $\sigma_s(V_{[(\eta_i)_i,(t_i)_i,\epsilon]}(\delta))$ form a basis of the topology of $(\Gamma\times\BR)^{\BZ}$. It follows that the measure $\FM_{(X,\Gamma)}$ is determined by the measure 
$$\FM_{(X,\Gamma)}(\sigma_s(V_{[(\eta_i)_i,(t_i)_i,\epsilon]}(\delta)))=\FM_{(X,\Gamma)}(V_{[(\eta_i)_i,(t_i)_i,\epsilon]}(\delta))=\Liouville_X(U_{[(\eta_i)_i,(t_i)_i,\epsilon]}(\delta)).$$
of such sets. On the other hand we get from Lemma \ref{lem distribution of vectors over time} that for every $v\in\CR(X)$ we have
\begin{align*}
\Liouville_X(U_{[(\eta_i)_i,(t_i)_i,\epsilon]}(\delta))
&=\lim_{T\to\infty}\frac 1T\Leb\left(\left\{ t\in[0,T]\middle\vert
\begin{array}{l}
\text{there is }s\in(0,\delta)\text{ such that the motif}\\ 
\left[(\eta_i),(t_i),\epsilon\right]\text{ is played at time }t+s\\
\text{in the melody of }\rho_v^X\end{array}
\right\}\right)\\
&=\delta\lim_{T\to\infty}\frac 1T\#\left\{t\in[0,T]\middle\vert\begin{array}{l}
\text{ the motif }[(\eta_i),(t_i),\epsilon]\text{ is played}\\ \text{ at time }t\text{ in the melody of }\rho_v^X\end{array}\right\}.
\end{align*}
Recalling the definition \eqref{eq frequency exists} of the frequency $\FF^{(X,\Gamma)}_{[(\eta_i),(t_i),\epsilon]}$ we have thus that
$$\Liouville_X(U_{[(\eta_i)_i,(t_i)_i,\epsilon]}(\delta))=\delta\cdot \FF^{(X,\Gamma)}_{[(\eta_i),(t_i),\epsilon]}$$
for all $\delta>0$ sufficiently small. In particular, the limit \eqref{eq frequency exists} exists. Combining everything we said so far, we have proved:

\begin{prop}\label{prop fequencies tell it all}
    The frequency $\FF^{(X,\Gamma)}_{[(\eta_i),(t_i),\epsilon]}$ associated to a motif $[(\eta_i),(t_i),\epsilon]$, that is the limit \eqref{eq frequency exists}, exists for every random vector $v\in\CR(X)$ and its value is independent of the particular $v$ chosen to calculate it. Moreover, the map which sends motifs to their frequencies uniquely determines the measure $\FM_{(X,\Gamma)}$.\qed
\end{prop}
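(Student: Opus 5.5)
The statement is Proposition \ref{prop fequencies tell it all}, and the plan is to assemble it from the ergodic facts of Section \ref{sec ergodic} in three steps: compute the measure of a thickened motif set, identify that measure with a frequency, and then pass to a generating family of sets.

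\textbf{Step 1.} Fix a motif $[(\eta_i),(t_i),\epsilon]$ and a small $\delta\in(0,\tau^{(X,\Gamma)}_{\min})$. I consider the open set $U=U_{[(\eta_i)_i,(t_i)_i,\epsilon]}(\delta)$. It consists of the vectors whose orbit plays the motif at some time in $(0,\delta)$.

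\textbf{Step 2.} I check that $\partial U$ is $\Liouville_X$-null. The conditions defining $U$ say that the geodesic crosses the prescribed components in the prescribed order, with no other crossings, and with crossing times in prescribed open intervals. Each of these conditions is analytic in $v$ away from tangencies, and the vectors whose orbit is tangent to $\Gamma$ within time $s_k+\delta$ form a null set. So the boundary is contained in a finite union of analytic hypersurfaces together with a null set, hence has measure zero.

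\textbf{Step 3.} I apply the Portmanteau form of the ergodic theorem (equivalently Lemma \ref{lem distribution of vectors over time}, applied to the thickening $U(V,\delta)$) at a random vector $v\in\CR(X)$. This gives
$$\Liouville_X(U)=\lim_{T\to\infty}\frac1T\Leb\{t\in[0,T]\colon \rho_v^X(t)\in U\}.$$

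\textbf{Step 4.} I relate the Lebesgue measure above to the count of playing times. Since $\delta<\tau_{\min}$, two distinct playing times are separated by at least $\tau_{\min}>\delta$. This is because a playing time is a hitting time of $\Gamma$, and the motif pins the starting note. Hence the set $\{t:\rho^X_v(t)\in U\}$ is, up to endpoint effects, a disjoint union of intervals of length $\delta$, one for each playing time. The number of playing times in $[0,T]$ is $O(T)$ by \eqref{eq note counting proportional to time}, and the boundary error is $O(1)$. Therefore
$$\Liouville_X(U)=\delta\cdot \lim_{T\to\infty}\frac1T\#\{\text{playing times in }[0,T]\}.$$
The left side does not depend on $v$, so the limit exists and is the same for every $v\in\CR(X)$.

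\textbf{Step 5.} For the second claim, I use $\Liouville_X(U)=\FM_{(X,\Gamma)}(V_{[(\eta_i)_i,(t_i)_i,\epsilon]}(\delta))$, which holds because $U={\bf m}^{-1}(V)$. By $\sigma$-invariance, the frequencies therefore determine $\FM$ on all shifted cylinders $\sigma_s(V)$.

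These shifted cylinders form a basis of the topology of the Polish space $(\Gamma\times\BR)^\BZ$. I would also check that they form a $\pi$-system up to null sets, or at least that finite intersections of cylinders are countable unions of such cylinders. Then a $\pi$–$\lambda$ argument, or outer regularity of finite Borel measures on a Polish space, shows that two probability measures agreeing on these sets coincide.

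\textbf{Main obstacle.} I expect the last point to need the most care. A finite intersection of shifted cylinders is again describable by a longer motif with finer time windows. So the approach is to approximate open sets from inside by countable unions of such longer-motif cylinders and then apply inclusion–exclusion. The second most delicate step is the null-boundary verification in Step 2.
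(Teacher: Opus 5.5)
Your proposal is correct and follows essentially the paper's route. The paper also uses the thickened set $U_{[(\eta_i),(t_i),\epsilon]}(\delta)$ and the ergodic/Portmanteau identity $\Liouville_X(U)=\delta\cdot\FF^{(X,\Gamma)}_{[(\eta_i),(t_i),\epsilon]}$, which rests on playing times being more than $\delta$ apart; it then combines $\Liouville_X(U)=\FM_{(X,\Gamma)}(V)$ with $\sigma$-invariance and the fact that the shifted cylinders form a basis of the topology. Your Steps 2 and 5 make explicit two points the paper passes over: that the boundaries are null, and that a basis alone is not a determining class. For the second point, inclusion--exclusion is circular, because it needs the measures of intersections, and those are not themselves cylinders; it is cleaner to use the null boundaries to decompose open sets into disjoint half-open cylinders on a fixed grid.
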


To conclude this section let us observe that, if we know $\chi(X)$, we can read the lengths of the individual components $\gamma$ of $\Gamma$ out of the measure $\FM_{(X,\Gamma)}$. Note first that if a motif consists of a single note, then the parameter $\epsilon$ plays absolutely no role. We call such motifs {\em degenerate} and denote them by $[\gamma,0]=[\gamma,0,\epsilon]$. Anyways, invoking \eqref{eq read length out of hits} we get
\begin{align*}
    \FF^{(X,\Gamma)}_{[\gamma,0]}
    &=\lim_{T\to\infty}\frac 1T\#\left\{t\in[0,T]\middle\vert\begin{array}{l}
\text{ the motif }[\gamma,0]\text{ is played}\\ \text{ at time }t\text{ in the melody of }\rho_v^X\end{array}\right\}\\    
    &=\lim_{T\to\infty}\frac 1T\vert\{t\in[0,T]\text{ with }\rho^X_v(t)\text{ based on }\gamma\}\vert\\
    &=\frac 1{\pi^2\vert\chi(X)\vert}\ell_X(\gamma)
\end{align*}
From here we get:

\begin{lem}\label{lem hear length}
    We have $\ell_X(\gamma)=\pi^2\vert\chi(X)\vert\cdot\FF^{(X,\Gamma)}_{[\gamma,0]}$ for any $\gamma\in\Gamma$.\qed
\end{lem}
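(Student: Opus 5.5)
The identity is a special case of the frequency computation carried out just above, so the plan is to make that specialization explicit. First I fix $\gamma\in\Gamma$ and look at the degenerate motif $[\gamma,0]$, which has $k=0$. For this motif the second defining condition reduces to $\{t\}=\{s\in[t,t]\text{ with }\rho^X_v(s)\in\Gamma\}$, and it holds as soon as $\rho^X_v(t)\in\gamma$. Consequently, for a random vector $v\in\CR(X)$, the set of times $t\in[0,T]$ at which $[\gamma,0]$ is played is exactly the set of times at which $\rho^X_v(t)$ is based on $\gamma$. This set is finite for every $T$, since $\rho^X_v$ meets the closed geodesic $\gamma$ transversally (random vectors are not tangent to $\Gamma$) and its intersection times with $\Gamma$ form the discrete sequence $t_i^\Gamma(v)$. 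So the count in \eqref{eq frequency exists} is precisely the quantity appearing on the left of \eqref{eq read length out of hits}.

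Next I will use Proposition \ref{prop fequencies tell it all}, which says the limit defining $\FF^{(X,\Gamma)}_{[\gamma,0]}$ exists and does not depend on $v$. Its value can therefore be computed with any $v\in\CR(X)$, and \eqref{eq read length out of hits} gives $\FF^{(X,\Gamma)}_{[\gamma,0]}=\ell_X(\gamma)/(\pi^2|\chi(X)|)$. Multiplying by $\pi^2|\chi(X)|$ yields the claim.

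The only step that needs checking is \eqref{eq read length out of hits} itself, that is, Lemma \ref{lem distribution of vectors over time} applied to $V=T^1_X\vert_\gamma^*$. This $V$ is open, and removing the two tangent directions changes nothing measure-theoretically, so the hypothesis $\int_{\overline V}1=\int_V1$ holds. It then remains to evaluate the Santal\'o integral:
$$\int_\gamma\int_0^{2\pi}|\sin\theta|\,d\theta\,dx=4\,\ell_X(\gamma),$$
and $\frac{4\ell_X(\gamma)}{4\pi^2|\chi(X)|}=\frac{\ell_X(\gamma)}{\pi^2|\chi(X)|}$. I do not expect any real obstacle. The one point to state explicitly is that in \eqref{eq read length out of hits} the expression $|\cdot|$ means counting measure on the discrete set of hitting times, which matches the $\#$ in \eqref{eq frequency exists}.
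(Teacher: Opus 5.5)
Your proposal is correct and matches the paper's argument. The paper also identifies the times at which the degenerate motif $[\gamma,0]$ is played with the times at which $\rho^X_v$ is based on $\gamma$, and then reads off the frequency from \eqref{eq read length out of hits}, that is, from Lemma \ref{lem distribution of vectors over time} applied to $T_X^1\vert_\gamma^*$ together with $\int_0^{2\pi}\vert\sin\theta\vert\,d\theta=4$. Your additional checks (transversality, the counting-measure meaning of $\vert\cdot\vert$, and the constant) fill in details the paper leaves implicit.
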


\section{Proof of Theorem \ref{sat measure theory}}\label{sec measure theory}
Noting that both $T^1X$ and $(\Gamma\times\BR)^\BZ$ are Polish spaces (that is, that they have countable dense sets and are homeomorphic to complete metric spaces) we can apply the {\em disintegration theorem} \cite[Theorem 14.D.10]{Baccelli} to the map
$${\bf m}_{(X,\Gamma)}:\CR(X)\to(\Gamma\times\BR)^\BZ$$
and the measure $\Liouville_X$. It follows that there is an essentially unique Borel map
\begin{equation}\label{eq map disintegration}
\mu:(\Gamma\times\BR)^\BZ\to\CP(\CR(X)),\ \mu:(\kappa_i,t_i)_i\mapsto\mu_{(\kappa_i,t_i)_i}
\end{equation}
to the space $\CP(\CR(X))$ of Borel probability measures on $\CR(X)$ such that for all $E\subset\CR(X)$ measurable we have 
$$\mu_{(\kappa_i,t_i)_i}(E)=\mu_{(\kappa_i,t_i)_i}(E\cap{\bf m}_{(X,\Gamma)}^{-1}((\kappa_i,t_i)_i))$$ 
and such that for every measurable function $f:\CR(X)\to[0,\infty]$ we have
\begin{equation}\label{eq disintegration}
    \int_{\CR(X)}f(v)\ d\Liouville_X(v)=\int_{(\Gamma\times\BR)^{\BZ}}\left(\int_{{\bf m}_{(X,\Gamma)}^{-1}((\kappa_i,t_i)_i)}f(v)\ d\mu_{(\kappa_i,t_i)_i}\right)d\FM_{(X,\Gamma)}((\kappa_i,t_i)_i).
\end{equation}
Here, {\em essentially unique} means that any two such maps $(\kappa_i,t_i)_i\mapsto\mu_{(\kappa_i,t_i)_i}$ agree on set of full measure with respect to $\FM_{(X,\Gamma)}=({\bf m}_{(X,\Gamma)})_*(\Liouville_X)$. Uniqueness has the following consequence:

\begin{lem}\label{lem invariance of disintegration}
    For all $t\in\BR$ we have
    \begin{equation}\label{eq invariance of disintegration}
        \mu_{\sigma_t((\kappa_i,t_i)_i)}=(\rho_t)_*\mu_{(\kappa_i,t_i)_i}
    \end{equation}
    for $\FM_{(X,\Gamma)}$-almost every $(\kappa_i,t_i)_i\in(\Gamma\times\BR)^\BZ$.
\end{lem}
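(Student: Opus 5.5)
The plan is to fix $t\in\BR$ and show that the family $\nu_{(\kappa_i,t_i)_i}:=(\rho_{-t})_*\mu_{\sigma_t((\kappa_i,t_i)_i)}$ is again a disintegration of $\Liouville_X$ along ${\bf m}_{(X,\Gamma)}$. Essential uniqueness then gives $\nu=\mu$ for $\FM_{(X,\Gamma)}$-almost every point. Applying $(\rho_t)_*$ to this equality yields \eqref{eq invariance of disintegration}.

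First, the family $\nu$ is Borel in $(\kappa_i,t_i)_i$. This holds because $\sigma_t$ is Borel and push-forward by the homeomorphism $\rho_{-t}$ is continuous on $\CP(\CR(X))$. Note that $\CR(X)$ is $\rho$-invariant: the Birkhoff averages along the orbit of $\rho_t(v)$ differ from those of $v$ by $O(1/T)$. Hence $(\rho_{-t})_*$ makes sense on $\CP(\CR(X))$.

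Second, $\nu$ is supported on fibres. By the semi-conjugacy ${\bf m}_{(X,\Gamma)}\circ\rho_s=\sigma_s\circ{\bf m}_{(X,\Gamma)}$ we have $\rho_{-t}({\bf m}^{-1}(\sigma_t(x)))={\bf m}^{-1}(\sigma_{-t}\sigma_t(x))$. Here one must check that $\sigma_{-t}\circ\sigma_t=\Id$ on the relevant points. This holds on sequences with $t_i\to\pm\infty$ strictly increasing, which includes the image of ${\bf m}$ and hence is $\FM$-conull. Therefore $\nu_x$ is concentrated on ${\bf m}^{-1}(x)$ for $\FM$-a.e.\ $x$. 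We may modify $\nu$ on a null set, for instance by setting it equal to $\mu$ there, so that the fibre condition holds everywhere.

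Third, we verify \eqref{eq disintegration} for $\nu$. For measurable $f\ge0$,
\[
\int\!\Big(\int f\,d\nu_x\Big)d\FM(x)=\int\!\Big(\int f\circ\rho_{-t}\,d\mu_{\sigma_t(x)}\Big)d\FM(x)=\int\!\Big(\int f\circ\rho_{-t}\,d\mu_{y}\Big)d\FM(y),
\]
where the last step uses the $\sigma$-invariance of $\FM$. By \eqref{eq disintegration} for $\mu$, the right-hand side equals $\int f\circ\rho_{-t}\,d\Liouville_X=\int f\,d\Liouville_X$, by the invariance of $\Liouville_X$ under the geodesic flow.

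Uniqueness then gives $(\rho_{-t})_*\mu_{\sigma_t(x)}=\mu_x$ for a.e.\ $x$, which is the claim. The main obstacle is the bookkeeping in the second step: $\sigma$ is only a genuine flow on the conull set of admissible sequences, and ${\bf m}$ is defined only on $\CR(X)$. Everything else is formal.
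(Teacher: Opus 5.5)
Your proposal is correct and follows the same route as the paper. You show that $x\mapsto(\rho_{-t})_*\mu_{\sigma_t(x)}$ again satisfies \eqref{eq disintegration}, using the $\rho$-invariance of $\Liouville_X$, the $\sigma$-invariance of $\FM_{(X,\Gamma)}$ and the semi-conjugacy, and then conclude by essential uniqueness of the disintegration. Your extra bookkeeping (measurability, concentration on fibres, $\sigma_{-t}\circ\sigma_t=\Id$) is what the paper leaves implicit; note only that $\sigma_{-t}\circ\sigma_t=\Id$ also needs the normalization $0\in(t_{-1},t_0]$, which holds on the image of ${\bf m}_{(X,\Gamma)}$.
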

\begin{proof}
    Invariance of the Liouville measure under the geodesic flow $\rho=\rho^X$, invariance of $\FM_{(X,\Gamma)}$ under $\sigma$, and the fact that ${\bf m}_{(X,\Gamma)}$ is a semi-conjugacy between $\rho$ and $\sigma$, imply that 
    \begin{align*}
        \int_{\CR(X)}f(v)\ d\Liouville_X(v)
        &=\int_{\CR(X)}(f\circ\rho_t)(v)\ d\Liouville_X(v)\\
        &\stackrel{\eqref{eq disintegration}}=\int_{(\Gamma\times\BR)^{\BZ}}\left(\int_{{\bf m}_{(X,\Gamma)}^{-1}((\kappa_i,t_i)_i)}(f\circ\rho_t)(v)\ d\mu_{(\kappa_i,t_i)_i}\right)d\FM_{(X,\Gamma)}((\kappa_i,t_i)_i)\\
        &=\int_{(\Gamma\times\BR)^{\BZ}}\left(\int_{\rho_t({\bf m}_{(X,\Gamma)}^{-1}((\kappa_i,t_i)_i))}f(v)\ d(\rho_t^{-1})_*(\mu_{(\kappa_i,t_i)_i})\right)d\FM_{(X,\Gamma)}((\kappa_i,t_i)_i)\\
        &=\int_{(\Gamma\times\BR)^{\BZ}}\left(\int_{{\bf m}_{(X,\Gamma)}^{-1}(\sigma_t^{-1}((\kappa_i,t_i)_i))}f(v)\ d(\rho_t^{-1})_*(\mu_{\sigma_t(\sigma_t^{-1}((\kappa_i,t_i)_i))})\right)d\FM_{(X,\Gamma)}((\kappa_i,t_i)_i)\\
        &=\int_{(\Gamma\times\BR)^{\BZ}}\left(\int_{{\bf m}_{(X,\Gamma)}^{-1}((\kappa_i,t_i)_i)}f(v)\ d(\rho_t^{-1})_*(\mu_{\sigma_t((\kappa_i,t_i)_i)})\right)d(\sigma_t)_*\FM_{(X,\Gamma)}((\kappa_i,t_i)_i)\\
        &=\int_{(\Gamma\times\BR)^{\BZ}}\left(\int_{{\bf m}_{(X,\Gamma)}^{-1}((\kappa_i,t_i)_i)}f(v)\ d(\rho_t^{-1})_*(\mu_{\sigma_t((\kappa_i,t_i)_i)})\right)d\FM_{(X,\Gamma)}((\kappa_i,t_i)_i)\\
    \end{align*}
    It follows that the map
    $$(\Gamma\times\BR)^\BZ\to\CP(\CR(X)),\ (\kappa_i,t_i)_i\mapsto(\rho_t^{-1})_*(\mu_{\sigma_t((\kappa_i,t_i)_i)})$$
    also satisfies \eqref{eq disintegration}. The claim follows thus from the uniqueness of \eqref{eq map disintegration}.
\end{proof}

We claim next that measures $\mu_{(\kappa_i,t_i)_i}$ are atomic, with a constant number of atoms, all of the same weight:

\begin{prop}\label{lem kappa to one}
    For every hyperbolic marimba $(X,\Gamma)$ there is some $\kappa=\kappa_{(X,\Gamma)}$ such that:
    \begin{itemize}
        \item For $\Liouville_X$-almost every $v\in\CR(X)$ there are exactly $\kappa$ distinct vectors $v_1,\dots,v_\kappa$ with ${\bf m}_{(X,\Gamma)}(v_i)={\bf m}_{(X,\Gamma)}(v)$. 
        \item Moreover, the probability measures $\mu_{(\kappa_i,t_i)_i}$ in \eqref{eq disintegration} consist $\FM_{(X,\Gamma)}$-almost surely of atoms of equal weight $\frac 1\kappa$.
    \end{itemize}
\end{prop}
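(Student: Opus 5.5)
We prove the two items together by studying the fibre-cardinality function
$$n:\CR(X)\to\BN,\qquad n(v)=\#\,{\bf m}_{(X,\Gamma)}^{-1}({\bf m}_{(X,\Gamma)}(v)).$$
By Lemma \ref{lem discreteness of music} this function is finite-valued.

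\textbf{Measurability and constancy of $n$.} The fibres of ${\bf m}_{(X,\Gamma)}$ are countable; in fact they are finite. So the Lusin--Novikov theorem applies to the Borel set $\{(v,w)\,:\,{\bf m}_{(X,\Gamma)}(v)={\bf m}_{(X,\Gamma)}(w)\}$. It yields countably many Borel partial functions $g_j$ whose graphs cover this set, and hence $n$ is Borel.

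The semi-conjugacy ${\bf m}_{(X,\Gamma)}\circ\rho_s=\sigma_s\circ{\bf m}_{(X,\Gamma)}$, together with the injectivity of $\rho_s$, shows that $\rho_s$ maps the fibre of $v$ bijectively onto the fibre of $\rho_s(v)$. Here we note that $\CR(X)$ is $\rho$-invariant. Hence $n\circ\rho_s=n$. By ergodicity of $\Liouville_X$, the function $n$ is almost surely equal to a constant $\kappa$. The same reasoning applies to $\FM_{(X,\Gamma)}$-almost every melody. This proves the first item.

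\textbf{Atomicity of the conditional measures.} Since $\mu_{(\kappa_i,t_i)_i}$ is supported on a fibre of cardinality $\kappa$ for $\FM$-almost every point, it is a sum of at most $\kappa$ atoms. Write $w(v)=\mu_{{\bf m}(v)}(\{v\})$; this is Borel, again by Lusin--Novikov. We must show that $w\equiv 1/\kappa$ almost surely.

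By Lemma \ref{lem invariance of disintegration}, $w(\rho_t v)=w(v)$ for almost every $v$ and each fixed $t$. A standard Fubini argument upgrades this to a function invariant under the whole flow, so ergodicity makes $w$ almost surely constant, equal to some $c$.

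It remains to check that every point of a typical fibre is charged. The set $Z=\{w=0\}$ satisfies $\Liouville_X(Z)=\int\mu_{(\kappa_i,t_i)_i}(Z)\,d\FM=0$ by \eqref{eq disintegration}. Hence almost every $v$ has positive weight.

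However, "almost every $v$" does not immediately give "every point of almost every fibre". The key step is the following claim: if $E\subset\CR(X)$ is null, then its saturation ${\bf m}^{-1}({\bf m}(E))$ is null. This is where absolute continuity of the geodesic flow structure enters.

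\textbf{Main obstacle: the saturation claim.} I expect this to be the hardest step. To prove it, write the saturation as $\bigcup_{j,k}g_j(g_k^{-1}(E))$, using the Lusin--Novikov sections. Each $g_j$ is locally a map $v\mapsto v'$ where $v'$ is another vector with the same melody.

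Near a generic point, such a map is given by analytic data. Both $v$ and $v'$ lie on the same sequence of curves, with the same hitting times, and the local correspondence is induced by following the analytic functions of Lemma \ref{lem discreteness of music}: a pair of consecutive hits determines the vector up to finitely many analytic branches. This correspondence is a piecewise analytic local diffeomorphism; it is nonsingular because it has an analytic inverse of the same type. Therefore it preserves null sets, and in fact, by Santaló's formula \eqref{eq practical Santalo} applied to the transversal $T^1X\vert_\Gamma^*$, it preserves the measure $|\sin\theta|\,d\theta\,dx$ up to a density.

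Granting the claim, almost every fibre contains exactly $\kappa$ points, each of weight $c>0$. Since the total mass is $1$, we get $\kappa c=1$, that is $c=1/\kappa$.

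If the analytic-diffeomorphism argument proves delicate, an alternative route is available. Show first that $w$ is constant along fibres: the function on fibres $v\mapsto\max w-\min w$ is flow-invariant, hence constant, and it must vanish because the weights are constant almost everywhere. Then deduce nullity of the uncharged part directly from \eqref{eq disintegration}.
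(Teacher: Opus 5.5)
Your argument for the weights has the same core as the paper's proof. The paper also notes that $\mu_{(\kappa_i,t_i)_i}$ is purely atomic. It uses Lemma~\ref{lem invariance of disintegration} to show that $w(v)=\mu_{{\bf m}(v)}(\{v\})$ is flow-invariant, hence a.e.\ equal to a constant $c$ (the paper's $\epsilon$). It then concludes from $1=\sum_{v\in{\bf m}^{-1}(x)}\mu_x(\{v\})=c\,\vert{\bf m}^{-1}(x)\vert$ for $\FM_{(X,\Gamma)}$-a.e.\ melody $x$.

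You differ in two respects, and both are improvements. First, you get the first item directly from the flow-invariance of the fibre cardinality $n$, which is clean. Second, you notice that the last equality above tacitly assumes that \emph{every} point of a typical fibre carries weight $c$. The disintegration alone only gives $\mu_x(\{w\neq c\})=0$ for a.e.\ $x$, i.e.\ every atom has weight $c$. So $1/c$ is the number of atoms, an integer at most $\kappa$. That it equals the fibre cardinality is exactly your saturation claim, which the paper's proof passes over. This is not automatic for finite-to-one Borel maps (one can map a null set onto a set of full image measure), so some geometric input is genuinely needed.

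Your justification of the saturation claim does not work as written. At points where the hitting-time data are critical, the correspondence $v\mapsto v'$ inside a fibre is not a local diffeomorphism. Having an ``analytic inverse of the same type'' does not give nonsingularity there. The missing idea is a Jacobian argument:
\begin{itemize}
\item Let $\Omega\subset T^1X$ be the open, full-measure set of vectors not based on $\Gamma$ whose geodesic meets $\Gamma$ at least twice in the past and once in the future. Set $H(v)=(t^\Gamma_{-2}(v),t^\Gamma_{-1}(v),t^\Gamma_0(v))\in\BR^3$.
\item Since all hits are transverse, $H$ is real analytic on $\Omega$. The argument of Lemma~\ref{lem discreteness of music} shows that $H$ has finite fibres on each connected component of $\Omega$.
\item Hence $\det dH$ cannot vanish identically on a component. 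Otherwise, on the open set where the rank is maximal, the constant rank theorem would give positive-dimensional fibres. So the critical set of $H$ is null.
\item Consequently $H$ maps null sets to null sets, because it is locally Lipschitz. It also pulls null sets back to null sets, because off the critical set it is a local diffeomorphism.
\item Let $B$ be the set of vectors based on $\Gamma$; it is saturated and null. Random vectors outside $B$ lie in $\Omega$, and $H$ is constant on the fibres of ${\bf m}_{(X,\Gamma)}$. So the saturation of a null set $E\subset\CR(X)$ is contained in $B\cup H^{-1}(H(E\setminus B))$, which is null.
\end{itemize}
Applying this to $E=\{w=0\}$ finishes your proof. There is no need to pass to the transversal or to Santal\'o's formula.

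Finally, discard the ``alternative route'' in your last paragraph. The fibrewise function $\max w-\min w$ is essentially flow-invariant and hence constant. But knowing that $w=c$ almost everywhere only tells you this constant is $0$ or $c$, because the uncharged points form a null set. Ruling out the value $c$ is the saturation claim again, so that route is circular.
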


\begin{proof}
    The probability measures $\mu_{(\kappa_i,t_i)_i}$ are supported by ${\bf m}_{(X,\Gamma)}^{-1}((\kappa_i,t_i)_i)$, a discrete set by Lemma \ref{lem discreteness of music}. It follows that $\mu_{(\kappa_i,t_i)_i}$ is purely atomic. Now, we get from Lemma \ref{lem invariance of disintegration} and the fact that ${\bf m}_{(X,\Gamma)}$ is a semiconjugacy between $\rho$ and $\sigma$ that 
    \begin{align*}
        \mu_{{\bf m}_{(X,\Gamma)(\rho_t(v))}}(\{\rho_t(v)\})
        &=\mu_{\sigma_t({\bf m}_{(X,\Gamma)}(v))}(\{\rho_t(v)\})=\big((\rho_t)_*\mu_{{\bf m}_{(X,\Gamma)}(v)}\big)(\{\rho_t(v)\})\\
        &=\mu_{{\bf m}_{(X,\Gamma)}(v)}(\{v\}).    
    \end{align*}
    In other words, the function
    $$\CR(X)\to\BR,\ v\mapsto\mu_{{\bf m}_{(X,\Gamma)}(v)}(v)$$
    is invariant under the geodesic flow, and hence essentially constant. It follows that there is some $\epsilon\ge 0$ with $\mu_{{\bf m}_{(X,\Gamma)}(v)}(v)=\epsilon$ for $\Liouville$-almost every $v\in\CR(X)$. This implies that 
    $$1=\mu_{(x_i,t_i)_i}({\bf m}_{(X,\Gamma)}^{-1}((\kappa_i,t_i)_i)=\sum_{v\in {\bf m}_{(X,\Gamma)}^{-1}((\kappa_i,t_i)_i}\mu_{{\bf m}_{(X,\Gamma)}(v)}(v)=\epsilon\cdot\left\vert {\bf m}_{(X,\Gamma)}^{-1}((\kappa_i,t_i)_i\right\vert$$
    for $\FM_{(X,\Gamma)}$-almost every $(\kappa_i,t_i)_i$. The claim follows when we set $\kappa=\frac 1\epsilon$.
\end{proof}

We are now ready to prove Theorem \ref{sat measure theory}, which we restate here for the convenience of the reader:

\begin{named}{Theorem \ref{sat measure theory}}
    Two hyperbolic marimbas $(X,\Gamma)$ and $(X',\Gamma')$ are isomelodic if and only if there is a full measure set $U\subset T^1X$ and an absolutely continuous map
    $$\Phi:U\to T^1X'$$
    such that for all $v\in U$ the $\Gamma$-melody of the geodesic in $X$ corresponding to $v$ is exactly the same one that the $\Gamma'$-melody of the geodesic in $X'$ corresponding to $\Phi(v)$: exactly the same notes being played at exactly the same times.
\end{named}

In the course of the proof we will be using the {\em Lusin-Novikov Uniformization Theorem} \cite[Thm. 18.10]{Kechris}, which we recall now. This fundamental theorem says that {\em if $\CX$ and $\CY$ are Polish spaces and $\CE\subseteq\CX \times\CY$ is a Borel set such that every vertical section $\CE_x =\{y \in\CY : (x, y) \in\CE\}$ is countable, then there exists a Borel set $\CE^* \subseteq\CE$ such that $\CE^*$ intersects each nonempty section $\CE_x$ in exactly one point. Moreover, the projections $\CE^*\to\CX$ and $\CE^*\to\CY$ are Borel.} Recall that every Borel subset of a Polish space is a standard Borel space.

\begin{proof}
Suppose first that the map $\Phi$ exists and note that $\Phi(U\cap\CR(X))$ has positive measure on $T^1X'$. It follows that $\Phi(U\cap\CR(X))\cap\CR(X')\neq\emptyset$. This means that there is $v\in\CR(X)$ such that $\Phi(v)\in\CR(X')$. This means that, by Proposition \ref{prop fequencies tell it all}, we can use the melodies ${\bf m}_{(X,\Gamma)}(v)$ and ${\bf m}_{(X',\Gamma')}(\Phi(v))$ to calculate the frequencies of motifs in $(X,\Gamma)$ and $(X',\Gamma')$. Since these melodies are identical, the frequencies of every motif in $(X,\Gamma)$ and $(X',\Gamma')$ agree, meaning that both marimbas are isomelodic.

To establish the other implication we are going to apply the Lusin-Novikov Uniformization Theorem to 
$$\CY=\CR(X'),\ \CX={\bf m}_{(X',\Gamma')}(\CR(X'))\text{ and }\CE=\{({\bf m}_{(X',\Gamma')}(v),v)\text{ for }v\in\CR(X')\}\subset\CX\times\CY.$$
From Lemma \ref{lem discreteness of music} we get that for all $x\in\CX$ the vertical section $\CE_x =\{y \in\CY : (x, y) \in\CE\}$ is finite and hence countable. It follows thus from Lusin-Novikov that there is a Borel set $\CE^*\subset\CE$ such that for all $(\kappa_i,t_i)_i\in{\bf m}_{(X',\Gamma')}(\CR(X'))$ there is a unique $v_{(\kappa_i,t_i)_i}\in\CR(X')$ with $((\kappa_i,t_i)_i,v_{(\kappa_i,t_i)_i})\in\CE^*$. Since the domain and target are Polish spaces and since its graph is measurable, we get that the map
$$L_{(X',\Gamma')}:{\bf m}_{(X',\Gamma')}(\CR(X'))\to\CR(X'),\ (\kappa_i,t_i)_i\to v_{(\kappa_i,t_i)_i}$$
is measurable. Since it is a bijection onto its image, it is a Borel isomorphism. 

Moreover we get from Proposition \ref{lem kappa to one} that there is some $\kappa>0$ such that for all measurable functions $f:T^1X'\to[0,\infty]$ we have
\begin{align*}
    \int_{\CR(X)}f(v)\ d\Liouville_X(v)
    &=\int_{(\Gamma\times\BR)^{\BZ}}\left(\int_{{\bf m}_{(X,\Gamma)}^{-1}((\kappa_i,t_i)_i)}f(v)\ d\mu_{(\kappa_i,t_i)_i}\right)d\FM_{(X,\Gamma)}((\kappa_i,t_i)_i)\\
    &\ge \int_{(\Gamma\times\BR)^{\BZ}}f(L_{(X',\Gamma')}((\kappa_i,t_i)_i))\mu_{(\kappa_i,t_i)_i}(L_{(X',\Gamma')}((\kappa_i,t_i)_i))\ d\FM_{(X,\Gamma)}((\kappa_i,t_i)_i)\\
    &=\frac 1\kappa\int_{(\Gamma\times\BR)^{\BZ}}f(L_{(X',\Gamma')}((\kappa_i,t_i)_i))\ d\FM_{(X,\Gamma)}((\kappa_i,t_i)_i)\\
    &=\frac 1\kappa\int_{(\Gamma\times\BR)^{\BZ}}(f\circ L_{(X',\Gamma')})\ d\FM_{(X,\Gamma)}.
\end{align*}
It follows in particular that $L_{(X',\Gamma')}$ is absolutely continuous. 

Now, we are assuming that $(X,\Gamma)$ and $(X',\Gamma')$ are isomelodic, meaning that the frequencies of all motifs agree:
$$\FF^{(X,\Gamma)}_{[(\eta_i),(t_i),\epsilon]}=\FF^{(X',\Gamma')}_{[(\eta_i),(t_i),\epsilon]}\text{ for all motifs }[(\eta_i),(t_i),\epsilon].$$
We get thus from Proposition \ref{prop fequencies tell it all} that the measures $\FM_{(X,\Gamma)}$ and $\FM_{(X',\Gamma')}$ agree. It follows in particular that their supports are identical. In particular, there is a full measure subset $U\subset\CR(X)\subset T^1X$ with 
$${\bf m}_{(X,\Gamma)}(U)\subset{\bf m}_{(X',\Gamma')}(\CR(X')).$$
The map
$$\Phi:U\to T^1X',\ v\mapsto L_{(X',\Gamma')}({\bf m}_{(X,\Gamma)}(v))$$
is the map we were after.
\end{proof}

Example \ref{example non-orientable} shows that in general the map $\Phi$ provided by Theorem \ref{sat measure theory} cannot be chosen to be measure preserving. However, this might be possibly true if one further assumes that $\chi(X)=\chi(X')$.

\begin{quest}\label{quest measure preserving}
    Suppose that $(X,\Gamma)$ and $(X',\Gamma)$ are isomelodic and satisfy $\chi(X)=\chi(X')$. Are there a full measure set $U\subset T^1X$ and a measure preserving map $\Phi: U\to T^1X'$ such that for all $v\in U$ the $\Gamma$-melody of the geodesic in $X$ corresponding to $v$ is exactly the same one that the $\Gamma'$-melody of the geodesic in $X'$ corresponding to $\Phi(v)$?
\end{quest}

In some sense Question \ref{quest measure preserving} amounts to asking whether generic fibers of the maps 
$${\bf m}_{(X,\Gamma)}:\CR(X)\to(\Gamma\times\BR)^\BZ\text{ and }{\bf m}_{(X',\Gamma')}:\CR(X')\to(\Gamma'\times\BR)^\BZ$$ 
have the same cardinality $\kappa=\kappa'$ whenever $(X,\Gamma)$ and $(X',\Gamma')$ are isomelodic and satisfy $\chi(X)=\chi(X')$. A positive answer to Question \ref{question2} in the next section would imply a positive answer to Question \ref{quest measure preserving}.

\section{Examples}\label{sec examples}
In this section we discuss Examples \ref{example separating pairs}-\ref{ex volume} from the introduction. We present them in a different order, prioritizing some similarities in the constructions.

\subsection{Genus 2, non-separating pair}
\begin{named}{Example \ref{example separating pairs}}
    There are two isomelodic hyperbolic marimbas $(X,\Gamma)$ and $(X,\Gamma')$ where $X,X'$ are non-isometric surfaces of genus $2$, and both $\Gamma\subset X$ and $\Gamma'\subset X'$ are non-separating pairs.
\end{named}

\begin{figure}[h]
\leavevmode \SetLabels
\endSetLabels
\begin{center}
\AffixLabels{\centerline{\includegraphics[width=0.4\textwidth]{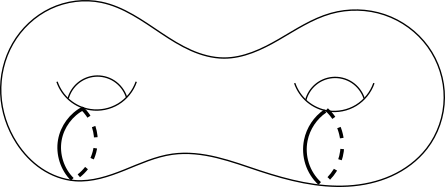}}\hspace{1cm}}
\end{center}
\caption{A non-separating pair.}
\end{figure}

Later on we will choose the surfaces in a specific $4$-dimensional family $\CZ$, but let us start with an arbitrary closed hyperbolic surface $X$ of genus $2$ and let $\Gamma$ be a non-separating multicurve consisting of two components $\alpha$ and $\beta$. Cutting $X$ along $\Gamma$ we obtain a four-holed sphere $W$. To recover $X$ from $W$, two components $\D_\alpha W$ and $\D_\alpha' W$ of $\D W$ get isometrically identified to produce $\alpha$, and another two $\D_\beta W$ and $\D_\beta' W$ to produce $\beta$. This evidently means that both $\D_\alpha W$ and $\D_\alpha' W$ (resp. $\D_\beta W$ and $\D_\beta' W$) have the same length $\ell_\alpha$ (resp. $\ell_\beta$). Conversely, when two positive numbers $\ell_\alpha$ and $\ell_\beta$ are fixed, the following holds:
\begin{itemize}
    \item[(*)] There is a unique hyperbolic metric on $W$ with totally geodesic boundary of length $\ell(\D_\alpha W)=\ell(\D_\alpha' W)=\ell_\alpha$ and $\ell(\D_\beta W)=\ell(\D_\beta' W)=\ell_\beta$, and with the property that there is a fixed-point free isometry $\tau: W \to W$, of order $2$, which fixes both $\D_\alpha W$ and $\D_\alpha' W$ as sets, and which interchanges $\D_\beta W$ and $\D_\beta' W$.
\end{itemize}
Indeed, this is the metric such that $W/\tau$ is the pair of pants with boundary lengths $\frac 12\ell_\alpha,\frac 12\ell_\alpha,\ell_\beta$. Note that $\tau$ preserves $\D_\alpha W$ (resp. $\D_\alpha' W$) as a set, but that it acts on it as a rotation by half of the length.

Let $\CW$ be the set of hyperbolic metrics on the 4-holed sphere $W$ satisfying (*). Let also $\CZ$ be the set of hyperbolic metrics $X$ on $\Sigma$ such that when we cut $X$ along the geodesic representatives of $\Gamma=\alpha\cup\beta$ then we obtain a 4-holed sphere $W_X\in\CW$. 

The points of $\CW$ are determined by the two lengths $\ell_\alpha$ and $\ell_\beta$. To determine points in $\CZ$ we need to furthermore specify the twists along $\alpha$ and $\beta$. It follows that $\CZ$ is the image of an analytic map from $\BR_{>0}^2\times\BR^2$ into the moduli space of hyperbolic structures on $\Sigma$. When we refer to a "generic" point in $\CZ$ we mean that it avoids the image of some lower-dimensional subset of $\BR_{>0}^2\times\BR^2$. 

For any generic $X\in\CZ$ we are going to construct a second hyperbolic structure $X'\in\CZ$ such that the pair $X,X'$ satisfies the conclusion of Example \ref{example separating pairs} for $\Gamma'$ still being the multicurve $\alpha\cup\beta$. Indeed, let $X'$ be the surface obtained from $X$ by cutting along $\alpha$ (not along $\Gamma$) and re-gluing with a half-length twist. In other words, if originally $x\in\D_\alpha W$ was glued to $y\in\D_\alpha' W$, now it is glued to its antipodal point $y'=\tau(y)\in\D_\alpha' W$. We stress that points in $\D_\beta W$ are identified in $X$ and $X'$ with the same points in $\D_\beta'W$.

\begin{lem}
    If $X$ is generic in $\CZ$, then $X$ and $X'$ are not isometric.
\end{lem}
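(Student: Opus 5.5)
Our plan is to compare length spectra: for generic $X\in\CZ$ we exhibit a closed geodesic length of $X$ that is not a closed geodesic length of $X'$. Isometric surfaces have the same unmarked length spectrum, so this suffices.

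Both $X$ and $X'$ are obtained from a single point of $\CZ$, i.e.\ from parameters $(\ell_\alpha,\ell_\beta,t_\alpha,t_\beta)\in\BR_{>0}^2\times\BR^2$. Here $X'$ corresponds to $(\ell_\alpha,\ell_\beta,t_\alpha+\tfrac12\ell_\alpha,t_\beta)$. Fix a marking, so that every homotopy class $c$ of closed curves on $\Sigma$ gives a real-analytic length function $\ell_c$ on this parameter space. The key steps are as follows.

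\textbf{Step 1.} We show that an isometry $X\to X'$ induces a mapping class $\phi$ with $\ell_{c}(X)=\ell_{\phi(c)}(X')$ for all $c$. Since the mapping class group is countable, it suffices to show that for each fixed mapping class $\phi$ the analytic identity
$$\ell_c(p)=\ell_{\phi(c)}(p+(\tfrac12\ell_\alpha,0))\quad\text{for all }c$$
fails on an open dense set of parameters $p$. The set of parameters where some $\phi$ works is then a countable union of proper analytic subsets. Such a union is the "lower-dimensional subset" allowed by the definition of generic.

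\textbf{Step 2.} Because everything is analytic, for each $\phi$ it is enough to find a single curve $c$ for which the identity fails at one point, or in some limiting regime. We first use the short curves. Let $\ell_\alpha,\ell_\beta\to0$ with the twists fixed. By the collar lemma, $\alpha$ and $\beta$ are then the only short geodesics on both surfaces. Hence $\phi$ must preserve $\{\alpha,\beta\}$, and if $\ell_\alpha\ne\ell_\beta$ it must fix each of them. This reduces the problem to $\phi$ in the stabilizer of $\alpha\cup\beta$, which is generated by the Dehn twists $T_\alpha,T_\beta$ and finitely many other classes.

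\textbf{Step 3.} For such $\phi$, we test curves $c$ crossing $\alpha$ once, and curves crossing $\alpha$ and $\beta$. Their lengths depend on the twist parameters through explicit hyperbolic trigonometry, of the form $\cosh(\ell_c/2)=A\cosh(t_\alpha)+B$. Dehn twists shift $t_\alpha$ by integer multiples of $\ell_\alpha$. Matching all of these functions would force the shift $\tfrac12\ell_\alpha$ to be an integer multiple of $\ell_\alpha$, possibly after composing with the finitely many symmetries coming from the involution $\tau$ and from reflections. We expect to reach a contradiction at this point.

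The main obstacle is Step 3. The symmetry $\tau$ could make $X$ and $X'$ genuinely isometric for every parameter, via an isometry of $W$ that rotates $\D_\alpha' W$ by half its length while fixing $\D_\alpha W$. We must show that no such isometry is compatible with the $\beta$-gluing. Our first approach is to use a curve crossing $\alpha$ once and $\beta$ once. Its length depends jointly on $t_\alpha$ and $t_\beta$, and the $\beta$-gluing is unchanged between $X$ and $X'$. We then compute its length asymptotically as $t_\beta\to\infty$ and check that the leading terms differ.
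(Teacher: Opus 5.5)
\paragraph{There is a genuine gap: the decisive step is missing.} Your Step 1 is the same reduction the paper uses, and Step 2 correctly forces $\phi$ to fix $\alpha$ and $\beta$. But the argument stops exactly where the content of the lemma lies. Step 3 ends with \emph{we expect to reach a contradiction}, and the proposed asymptotic computation as $t_\beta\to\infty$ is never carried out. So nothing you wrote rules out $X\cong X'$.

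Moreover, what remains after Step 2 is not $\langle T_\alpha,T_\beta\rangle$ up to finitely many symmetries. Take $\delta\subset W$ separating $\partial_\alpha W\cup\partial'_\alpha W$ from $\partial_\beta W\cup\partial'_\beta W$. Then $T_\delta$ fixes $\alpha$ and $\beta$, and since $\alpha,\beta,\delta$ form a pants decomposition, $\langle T_\alpha,T_\beta,T_\delta\rangle\cong\BZ^3$. More generally, every mapping class of $W$ fixing each boundary component extends to $X$. Your Step 3 reasoning only tracks shifts of $t_\alpha$ and $t_\beta$, so it does not address such $\phi$. You would first have to use curves contained in $W$ to force $\phi|_W$ to be an isometry of $W$; their lengths are twist-independent and the same in $X$ and $X'$. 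Only then could you check the finitely many remaining cases against the $\alpha$-gluing.

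\paragraph{The obstacle you single out is not the right one.} An isometry of $W$ that is the identity on $\partial_\alpha W$ is the identity, so the map you describe does not exist. The symmetry actually present is $\tau$ itself. Write $\sigma_\beta:\partial_\beta W\to\partial'_\beta W$ for the $\beta$-gluing.
\begin{itemize}
\item $\tau\circ\sigma_\beta$ is an orientation reversing isometry of a circle, hence an involution, so $\tau$ is compatible with every $\beta$-gluing.
\item $\tau$ is compatible with every $\alpha$-gluing, because the antipodal map commutes with isometries between circles.
\end{itemize}
Thus $\tau$ induces isometries of both $X$ and $X'$. But it rotates both $\partial_\alpha W$ and $\partial'_\alpha W$ by half, so it leaves the $\alpha$-gluing unchanged and cannot produce the half twist.

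\paragraph{How the paper avoids the classification.} It never enumerates mapping classes. By the same analyticity and countability reduction, it suffices to exhibit one $X\in\CZ$ with $X\not\cong X'$.
\begin{itemize}
\item Take $\alpha$ very short, so that it is the unique shortest geodesic of both $X$ and $X'$. Then any isometry $X\to X'$ sends $\alpha$ to $\alpha$.
\item Choose the $\alpha$-twist so that the unique shortest orthogeodesics $a,\tau(a)$ from $\partial_\alpha W$ to $\partial'_\alpha W$ close up in $X$ to closed geodesics of length $\ell(a)$.
\item In $X'$, the endpoint of $a$ on $\partial_\alpha W$ is glued to the antipode of its endpoint on $\partial'_\alpha W$. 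Hence every closed geodesic of $X'$ meeting $\alpha$ is strictly longer than $\ell(a)$.
\end{itemize}
The isometry invariant \emph{length of the shortest closed geodesic meeting the systole} therefore distinguishes $X$ from $X'$.
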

\begin{proof}
    Since the mapping class group $\Map(X)$ is countable, and since $X\to X'$ is an analytic map with connected domain $\CZ$, it suffices to prove that there is some  $X\in\CZ$ for which $X'\notin\Map(X)\cdot X$. Now, if $X$ is such that $\alpha$ and $\tau(\alpha)$ is not only very short but also the unique shortest curve, and if there is a further simple closed geodesic $\gamma$ in $X$ such that $\gamma\cap W$ and $\tau(\gamma\cap W)$ are the unique shortest arcs from $\D_\alpha W$ to $\D_{\alpha'} W$, then $X$ is not isometric to $X'$ because any isometry $X\to X'$ would have to send $\alpha\subset X$ to $\alpha\subset X'$ but every curve in $X'$ which meets $\alpha$ is strictly longer than $\gamma\subset X$.
\end{proof}

Let from now on 
$$U=\CR(X)\cap T^1(W\setminus\D W)$$
be the set of random vectors $v\in\CR(X)\subset T^1X$ based at points in the interior $W=W_X$. For any such $v\in U$ and let ${\bf m}_{(X,\Gamma)}(v)=(\kappa_i^\Gamma(v),t_i^\Gamma(v))_{i\in\BZ}$ be its melody. Denote by 
$$I_i(v)=\rho_v^X\vert_{[t_i^\Gamma(v),t_{i+1}^\Gamma(v)]}$$ 
the restriction of the geodesics $\rho_v^X$ to the interval between the $i$-th and the $(i+1)$-st times it hits $\Gamma$. We think of the $I_i(v)$'s as a decomposition
$$\rho^X_v=\dots\cup I_{-2}(v)\cup I_{-1}(v)\cup I_0(v)\cup I_1(v)\cup I_2(v)\cup \dots$$
of $\rho_v^X$. We are going to use this decomposition to find a geodesic $\rho_{v'}^{X'}$ in $X'$ such that 
$${\bf m}_{(X,\Gamma)}(v)={\bf m}_{(X',\Gamma')}(v')$$
To do that consider 
$$\Delta(i)=\left\{
\begin{array}{ll}
\vert\{j\in\{0,\dots,i-1\}\text{ with }\kappa_i^\Gamma(v)=\alpha\}\vert\mod(2) & \text{ for }i\ge 0 \\
\vert\{j\in\{i,\dots,-1\}\text{ with }\kappa_i^\Gamma(v)=\alpha\}\vert\mod(2) & \text{ for }i< 0
\end{array}
\right.$$
for $i\in\BZ$. Let also $\dots,J_{-2}(v),J_{-1}(v),J_0(v),J_1(v),J_2(v),\dots$ be the bi-infinite sequence of parametrized geodesic segments in $W$ given by 
$$J_i(v)=\tau^{\Delta(i)}\circ I_i(v)\text{ for all }i.$$
The key observation is that when considered in $X$ the following holds: whenever one of those segments ends $J_i(v)$ in $\alpha$, then the subsequent segment $J_{i+1}(v)$ starts half-a-full-twist along $\alpha$, but when we consider them in $X'$ then their juxtaposition is a bi-infinite geodesic
$$\eta_v=\dots\cup J_{-2}(v)\cup J_{-1}(v)\cup J_0(v)\cup J_1(v)\cup J_2(v)\cup \dots$$
Now, since the segments $I_i(v)$ and $J_i(v)$ differ by the isometry $\tau$ of $W$, and since $\tau$ individually preserves the pairs $\{\D_\alpha W,\D_\alpha' W\}$ and $\{\D_\beta W,\D_\beta' W\}$, we get that the melody of $\eta_v$ is identical to that of $\rho^{X}_{v}$. It follows that
$$\Phi:U\to T^1X',\ v\mapsto\dot\eta_v(0)$$
sends vector to vectors with identical melodies. In more concrete terms, the map $\Phi$ can be described as follows: by construction both $X$ and $X'$ contain isometric copies of the interior of $W$. The map $\Phi$ is induced by the identification between $T^1(W\setminus\D W)\subset T^1X$ with $T^1(W\setminus\D W)\subset T^1X'$. From this last description we get that $\Phi$ is measurable, defined on a full-measure set, and not only absolutely continuous but actually measure preserving.

\subsection{Pants decompositions}
\begin{named}{Example \ref{example non-orientable}}
        For every $k\ge 2$ there are $3k-2$ pairwise non-isometric connected hyperbolic surfaces $X_0,\dots,X_{3k-3}$ with $\chi(X_i)=2-2k$, and for all $i=0,\dots,3k-3$ a pants decomposition $\Gamma_i\subset X_i$ such that the hyperbolic marimbas
    $$(X_0,\Gamma_0),\dots,(X_{3k-3},\Gamma_{3k-3})$$ 
    are isomelodic.
\end{named}

The construction is pretty similar to that in Example \ref{example separating pairs}. We start with a closed orientable hyperbolic surface $X_0$ of genus $k\ge 2$ and a pants decomposition $\Gamma_0$ with the following properties:
\begin{itemize}
    \item No component of $\Gamma_0$ separates $X_0$, and
    \item there is an orientation reversing isometry $\tau:X_0\to X_0$ which preserves each component $\gamma$ of $\Gamma_0$ individually ($\tau$ has exactly 2-fixed points on each $\gamma$).
\end{itemize}
Labeling the components of $\Gamma_0$ by $\gamma_1,\dots,\gamma_{3k-3}$ let $X_i$ be the (non-orientable) surface obtained by cutting $X_0$ along $\gamma_i$ and re-gluing via $\tau$ and letting $\Gamma_i=\{\hat\gamma_1,\dots,\hat\gamma_{3k-3}\}$ be the multicurve with 
\begin{itemize}
    \item for $j\neq i$, $\hat\gamma_j$ is the image of $\gamma_j$ under the embedding $X_0\setminus\gamma_i\subset X_i$, and
    \item $\hat\gamma_i\subset X_i$ is curve obtained from $\gamma_i\subset X_0$ but cutting and re-gluing.
\end{itemize}
Note that for all $i,j$ we have an identification $X_i\setminus\Gamma_i\simeq X_j\setminus\Gamma_j$. Let
$$\Phi^i_j:\CR(X_i)\cap T^1(X_i\setminus\Gamma_i)\to T^1(X_j\setminus\Gamma_j)$$
be the induced map at the level of unit tangent bundles. A similar argument as Example \ref{example separating pairs} shows that for all $i,j$ and all $v\in \CR(X_i,\Gamma_i)\cap T^1(X_i\setminus\Gamma_i)$ we have 
$${\bf m}_{(X_i,\Gamma_i)}(v)={\bf m}_{(X_j,\Gamma_j)}(\Phi^i_j(v)).$$
It is moreover easy to see that if the components of $\Gamma_0$ in $X_0$ are all short and of distinct lengths, then the surfaces $X_i$ and $X_j$ are not pair-wise isometric.

\begin{bem}
    In Example \ref{example non-orientable} we explained why we can get sets of cardinality $3k-2$ consisting of pairwise isomelodic elements. It is maybe worth noting that the same idea can be used to build much larger sets. In fact, instead of just cutting along a single component of $\Gamma_0$ and then re-gluing, one can cut along any subcollection in $\Gamma_0$ which fails to disconnect $X_0$. In this way one can build sets of exponentially growing cardinality consisting of pairwise isomelodic hyperbolic marimbas where the underlying surfaces have always $\chi(X_i)=2-2k$.
\end{bem}

Note that all but one of the surfaces in Example \ref{example non-orientable} are non-orientable. We do not know if there are two pants decompositions $\Gamma_1$ and $\Gamma_2$ of closed orientable hyperbolic surfaces $X_1$ and $X_2$ such that the marimbas $(X_1,\Gamma_1)$ and $(X_2,\Gamma_2)$ are isomelodic but not isometric. In other words, we do not know the answer to the following question:

\begin{quest}
    Suppose that $(X,\Gamma)$ and $(X',\Gamma')$ are orientable isomelodic non-isometric marimbas. Can we enlarge both $\Gamma$ and $\Gamma'$ to $\hat\Gamma$ and $\hat\Gamma'$, each by a single component, so that $(X,\hat\Gamma)$ and $(X',\hat\Gamma')$ are not isomelodic?
\end{quest}

\begin{quote}
    From now on, all surfaces are supposed to be orientable.
\end{quote}

\subsection{Different Euler characteristic}

\begin{named}{Example \ref{ex volume}}
    There are two isomelodic marimbas $(X,\Gamma)$ and $(X',\Gamma')$ with $\chi(X)\neq\chi(X')$.
\end{named}

Let $(X,\Gamma)$ be a hyperbolic marimba for which there is a homomorphism
$$\omega_n:\pi_1(X)\to \BZ/n\BZ$$
such that $\omega_n(\gamma)$ has order $n$ for every component $\gamma$ of $\Gamma$. Let $X'=\widetilde X/\Ker(\omega_n)$ be the cover of $X$ corresponding to the kernel of $\omega_n$, and note that the preimage $\pi^{-1}(\gamma)$ under the covering map $\pi:X'\to X$ of each component $\gamma$ of $\Gamma$ is connected. It follows that $\gamma\mapsto\pi^{-1}(\gamma)$ is a bijection between the sets of connected components of $\Gamma$ and of $\Gamma'=\pi^{-1}(\Gamma)$. Transporting to $\Gamma'$ the labeling of the components of $\Gamma$ via this bijection, we can think of $(X',\Gamma')$ as a hyperbolic marimba. 

Now, the basic, and basically trivial observation is that for any $v\in\CR(X')$ we have that
$${\bf m}_{(X,\Gamma)}(d\pi(v))={\bf m}_{(X',\Gamma')}(v),$$
meaning that two marimbas $(X,\Gamma)$ and $(X',\Gamma')$ are isomelodic. In fact, in one direction the map $\Phi$ provided by Theorem \ref{sat measure theory} is nothing other than the restriction to $\CR(X')$ of the differential $d\pi$ of the covering map. In the opposite direction, let $V\subset X$ be a simply connected subset of full measure and pick a smooth map $\sigma:V\to X'$ with $\pi\circ\sigma=\Id_U$. The restriction to $T^1V\cap\CR(X)$ of the differential $d\sigma$ of $\sigma$ is the map $\Phi$ provided by Theorem \ref{sat measure theory}.

\begin{bem}
    Let $\omega: \pi_1(X)\to \BZ$ be such that $\omega(\gamma)=\pm1$ for every component $\gamma$ of $\Gamma$, and define $\omega_n$ as the composition of $\omega$ and $p_n : \BZ \to \BZ/n\BZ$. Varying $n$, one obtains infinitely many isomelodic hyperbolic marimbas with pairwise distinct Euler characteristic.
\end{bem}

\subsection{Genus 2, separating curve and non-separating curve}

\begin{named}{Example \ref{example separating and non-separating}}
    There are two isomelodic hyperbolic marimbas $(X,\Gamma)$ and $(X,\Gamma')$ where $X,X'$ are non-isometric surfaces of genus $2$, where $\Gamma\subset X$ is a separating simple curve and $\Gamma'\subset X'$ is a non-separating one.
\end{named}

This example, like the previous one, is also based on taking covers. In this case let $\CO$ be a hyperbolic orbifold whose underlying surface is a torus and which has $2$ cone points, both with cone angle $\pi$. Let $\Gamma_\CO$ be a simple geodesic arc joining both endpoints. Now, the two surfaces $X$ and $X'$ will arise as the domains of two degree $2$ orbifold covers
$$\pi:X\to\CO\text{ and }\pi':X'\to\CO$$
and we will have $\Gamma=\pi^{-1}(\Gamma_\CO)$ and $\Gamma'=(\pi')^{-1}(\Gamma_\CO)$. Since the covers have degree $2$ and since the domains are actual surfaces and not merely orbifolds, we get that in that setting both $\Gamma$ and $\Gamma'$ will consist of a single component. Moreover, for any two $v\in\CR(X)$ and $v'\in\CR(X')$ with $d\pi(v)=d\pi'(v')$ we have
$${\bf m}_{(X,\Gamma)}(v)={\bf m}_{(X',\Gamma')}(v'),$$
meaning that the two hyperbolic marimbas $(X,\Gamma)$ and $(X',\Gamma')$ are isomelodic. It follows that what we have to do is to build the two degree 2 covers $\pi:X\to\CO$ and $\pi':X'\to\CO$ so that $\Gamma$ is separating and $\Gamma'$ isn't. 

Since thinking of orbifolds is a bit complicated let us denote by $\CO^*$ the topological surface obtained by deleting the two cone points of $\CO$. We can thus think of $\Gamma_\CO$ as a properly embedded arc $\Gamma^*\subset\CO^*$. Consider now the cover $X^*\to\CO^*$ corresponding to the kernel of the cohomology class
$$\pi_1(\CO^*)\to\BZ/2\BZ,\ \alpha\mapsto\iota(\alpha,\Gamma^*)\mod(2).$$
When we denote by $X$ the surface obtained by closing up both cusps of $X^*$, then the cover $\pi:X^*\to\CO^*$ extends to an orbifold cover $\pi:X\to\CO$ such that $\Gamma=\pi^{-1}(\Gamma_\CO)$ is a separating curve. 

To construct the second cover, let $\eta\subset\CO^*$ be another properly embedded arc going from a puncture to the other, disjoint and not properly homotopic to $\Gamma_*$. The cohomology class 
$$\pi_1(\CO^*)\to\BZ/2\BZ,\ \alpha\mapsto\iota(\alpha,\eta)\mod(2)$$
determines again a degree $2$ cover $(X')^*\to\CO^*$, which again extends to a degree $2$ orbifold cover $\pi':X'\to\CO$. This time the preimage $\Gamma'=(\pi')^{-1}(\Gamma_\CO)$ is a non-separating curve. 

\subsection{A remark and a question} Although this is not how we presented all of them, all those examples can be obtained as follows. Let $\CO=G_\CO\bs\BH^2$ be a hyperbolic orbifold whose singular points, if any, have cone angle $\pi$. Let $\Gamma_\CO\subset\CO$ be a labeled collection of simple disjoint geodesic arcs and curves, where arcs means that their endpoints are distinct cone-points, and where disjoint means that they are disjoint in the underlying topological surface. Every $\BS^1$ component $\gamma$ of $\Gamma_\CO$ determines a maximal cyclic subgroup $\langle\gamma\rangle$ of $G_\CO$. If $\gamma\subset\Gamma_\CO$ is an arc, then it determines an infinite dihedral group which we again denote by $\langle\gamma\rangle$.

We will say that a finite index subgroup $G\subset G_\CO$ satisfies condition (M) if for every $\gamma\subset\Gamma_\CO$ we have
\begin{equation}\tag{M}
    \vert\langle\gamma\rangle/(\langle\gamma\rangle\cap G)\vert=\vert G_\CO/G\vert
\end{equation}
Now, if a torsion-free $G$ subgroup satisfies condition (M), then the preimage of every component of $\Gamma_\CO$ under the orbifold cover 
$$X=G\bs\BH^2\to G_\CO\bs\BH^2=\CO$$
is connected. It follows that the labeling of $\Gamma_\CO$ induces a labeling of $\Gamma=\pi^{-1}(\Gamma_\CO)$. We can thus think of $(X,\Gamma)$ as a hyperbolic marimba, the {\em hyperbolic marimba associated to $G$}. 

Now, the basic observation is that {\em if $G, G' \subset G_\CO$ satisfy both condition (M), then the associated hyperbolic marimbas $(X,\Gamma)$ and $(X',\Gamma')$ are isomelodic.}

As we mentioned, all examples above can be recovered in this way. In fact, we have no idea if other examples are possible or not:

\begin{quest}\label{question2}
    Are there two isomelodic hyperbolic marimbas $(X,\Gamma)$ and $(X',\Gamma')$ with the property that the two surfaces $X$ and $X'$ are not commensurable?
\end{quest}

The construction we described above is eerily reminiscent of the Sunada construction of isospectral hyperbolic surfaces \cite{Sunada}. In that setting, it seems to be unknown if there are non-commensurable examples (that would be non-arithmetic by work of Reid~\cite{Reid_commesurable_arithmetic_surfaces}).

\section{A little bit of Teichm\"uller theory}\label{sec teichmueller theory}
In the sequel we will need some basic facts about Teichm\"uller space. We discuss this now. We will be assuming that the reader knows Teichm\"uller space, Fenchel-Nielsen coordinates, and that these coordinates yield a real analytic structure on Teichm\"uller space. We refer to \cite{Buser} for details on all those matters.

\subsection*{Teichm\"uller space of a marimba}
We suppose throughout this section that $(X_0,\Gamma_0)$ is an oriented topological marimba and choose once and forever an orientation of the components of $\Gamma_0$.

Under the {\em Teichm\"uller space} $\CT(X_0,\Gamma_0)$ of the marimba $(X_0,\Gamma_0)$ we understand the set equivalence classes of marked hyperbolic marimbas $\phi:(X_0,\Gamma_0)\to(X,\Gamma)$ with $\phi$ an orientation preserving homeomorphism, and where, as always, two such $\phi:(X_0,\Gamma_0)\to(X,\Gamma)$ and $\phi':(X_0,\Gamma_0)\to(X',\Gamma')$ are equivalent if there is an (label preserving) isometry $\psi:(X,\Gamma)\to(X',\Gamma')$ such that $\phi'$ and $\psi\circ\phi$ are homotopic. Note that there is not much difference between the Teichm\"uller spaces $\CT(X_0,\Gamma_0)$ and $\CT(X_0)$ of the marimba $(X_0,\Gamma_0)$ and of the surface $X_0$ itself. Indeed, the map $\CT(X_0)\to\CT(X_0,\Gamma_0)$ sending the class of $\phi:X_0\to X$ to that of $\phi:(X_0,\Gamma_0)\to(X,\phi(\Gamma_0))$ is a well-defined bijection. 

\subsection*{Arcs and their length functions}
Under an {\em arc} in a topological marimba $(X_0,\Gamma_0)$ we will understand a map $\alpha:([0,1],\{0,1\})\to(X_0,\Gamma_0)$. Two arcs are homotopic if they are homotopic as maps of pairs. An arc is {\em essential} if it is not homotopic to a constant map. Arcs in $(X_0,\Gamma_0)$ will always be assumed to be essential and transversal to $\Gamma_0$, with a minimal number of intersections in their homotopy classes. 

\begin{defi}
    A {\em $k$-step arc} in a marimba $(X_0,\Gamma_0)$ is an arc $\alpha:([0,1],\{0,1\})\to(X_0,\Gamma_0)$ with $k+1=\vert[0,1]\cap\alpha^{-1}(\Gamma)\vert$.
\end{defi}

For example, an arc $\alpha\subset(X_0,\Gamma_0)$ whose interior does not meet $\Gamma_0$ is a $1$-step arc. We evidently have a bijection between $1$-step arcs in $(X_0,\Gamma_0)$ and proper arcs in the metric completion $Y_0$ of $X_0\setminus\Gamma_0$.
\smallskip

Suppose now that $\alpha$ is an arc in $(X_0,\Gamma_0)$. For $X=(X,\Gamma)\in\CT(X_0,\Gamma_0)$ we can identify via the marking the homotopy class of $\alpha$ in $(X_0,\Gamma_0)$ with the homotopy class of an arc, which we still denote by $\alpha$, in $X$. This homotopy class has a preferred geodesic representative $\alpha_X$, namely the one which at the beginning and the end is orthogonal to $\Gamma$--we say that such arcs are {\em orthogeodesic} and we refer to the length of the orthogeodesic representative as the {\em length} $\ell_X(\alpha)$ of the homotopy class. In this way we get a map
$$\CT(X_0,\Gamma_0)\to\BR_{>0},\ X\mapsto\ell_X(\alpha),$$
the {\em length function of the arc $\alpha$}.
\smallskip

If $\alpha$ is an arc in $(X_0,\Gamma_0)$ is the juxtaposition of two arcs $\eta^1$ and $\eta^2$ with endpoints on $\Gamma_0$, then for any  $X=(X,\Gamma)\in\CT(X_0,\Gamma_0)$ there is some uniquely determined $d_{\alpha,\eta^1,\eta^2}(X)\in\BR$ such that the homotopy class $\alpha$ has a unique representative in $(X,\Gamma)$ of the following form: first run the orthogeodesic representative $\eta^1_X$, then run along $\Gamma$ for (signed) length $d_{\alpha,\eta^1,\eta^2}(X)$, and finally run the orthogeodesic segment $\eta^2_X$. We can calculate $\ell_X(\alpha)$ from $\ell_X(\eta^1_X)$, $\ell_X(\eta^2_X)$ and $d_{X}(\alpha)$ via the formula:
\begin{equation}\label{eq thm 244 buser}
\begin{split}
\cosh(\ell_X(\alpha))=&\sinh(\ell_X(\eta^1))\cdot\sinh(\ell_X(\eta^2))\cdot\cosh\big(d_{\alpha,\eta^1,\eta^2}(X)\big)\\
&+\cosh(\ell_X(\eta^1))\cdot\cosh(\ell_X(\eta_2)).   
\end{split}
\end{equation}
See \cite[Thm. 2.4.4]{Buser}.

\begin{figure}[h]
\leavevmode \SetLabels
\L(.49*.92) $\alpha$\\%
\L(.33*1.02) $\eta^1$\\%
\L(.50*-.1) $\eta^2$\\%
\L(.35*.10) $I$\\%
\endSetLabels
\begin{center}
\AffixLabels{\centerline{\includegraphics[width=0.4\textwidth]{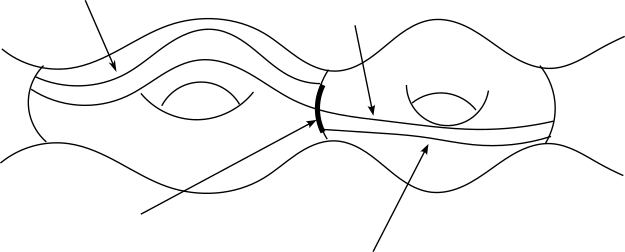}}\hspace{1cm}}
\end{center}
\caption{The bold printed segment $I$ has length $d_{\alpha,\eta^1,\eta^2}(X)$.}
\end{figure}

\subsection*{Analytic structure}
As we already did before, let $Y_0$ be the metric completion of $X_0\setminus\Gamma_0$. Abusing notation we will often not distinguish between $Y_0$ and $X_0\setminus\Gamma_0$ itself. Anyways, to recover $X_0$ from $Y_0$ we glue together pairs of boundary components of $Y_0$ via orientation reversing homeomorphisms. Each such pair $\D^-_\gamma Y_0,\D^+_\gamma Y_0$ corresponds to a component $\gamma$ of $\Gamma$. For the sake of concreteness we assume that the identification $\D^-_\gamma Y\simeq\gamma$ is orientation preserving while $\D^+_\gamma Y\simeq\gamma$ is orientation reversing. Here we have endowed $Y_0$ with the orientation it inherits from $X_0$, and oriented $\D Y_0$ in such a way that when we march in positive direction then the interior is on the right.

Denote by $\CT_b(Y_0)$ the subset of the Teichm\"uller space $\CT(Y_0)$ consisting of those points $Y$ for which the boundary components $\D^+_\gamma Y_0$ and $\D^-_\gamma Y_0$ have the same length for every component $\gamma$ of $\Gamma_0$:
$$\CT_b(Y_0)=\{Y\in\CT(Y_0)\text{ with }\ell_Y(\D_\gamma^+Y_0)=\ell_Y(\D_\gamma^-Y_0)\text{ for all }\gamma\in\Gamma_0\}$$
Extending $\Gamma_0$ to a pants decomposition $P$ of $X_0$ we get also one of $Y_0$. Fenchel-Nielsen coordinates yield then a homeomorphism
$$\Phi:\CT_b(Y_0)\times\BR^{\Gamma_0}\to\CT(X_0,\Gamma_0).$$
There are plenty of choices for what one could call "Fenchel-Nielsen coordinates" \cite{BBFS}, but we will choose them as in \cite{Buser}.

Teichm\"uller space has a canonical real analytic structure with respect to which it is analytically equivalent to Euclidean space:
$$\CT(X_0,\Gamma_0)\simeq\BR^{3\vert\chi(X_0)\vert}\text{ and }\CT_b(Y_0)\simeq\BR^{3\vert\chi(X_0)\vert-\vert\Gamma_0\vert}.$$
Moreover, both the map $\Phi$ and its inverse are analytic. It follows also easily from the construction of Fenchel-Nielsen coordinates and \eqref{eq thm 244 buser} that the functions $X\mapsto\ell_\alpha(X)$ and $X\mapsto d_{\alpha,\eta^1,\eta^2}(X)$ are analytic on $\CT(X_0,\gamma)$.

We summarize in the following proposition a few facts we will need later on.

\begin{prop}\label{prop FN}
    Let $(X_0,\Gamma_0)$ be a closed oriented topological marimba:
    \begin{itemize}
        \item The functions $X\mapsto\ell_\alpha(X)$ and $X\mapsto d_{\alpha,\eta^1,\eta^2}(X)$ are analytic on $\CT(X_0,\gamma)$.
        \item If we denote by $Y_0$ be the metric completion of $X_0\setminus\Gamma_0$, then there is an analytic isomorphism
    $$\Phi:\CT_b(Y_0)\times\BR^{\Gamma_0}\to\CT(X_0,\Gamma_0)$$
    such that $\Phi(Y,(\theta_\gamma)_{\gamma\in\Gamma_0})$ is the hyperbolic marimba obtained by identifying, for all $\gamma\in\Gamma_0$, the boundary components $\D_\gamma^-Y$ and $\D_\gamma^+Y$ of $Y$ via an orientation reversing isometry $\sigma_{Y,\gamma}^{\theta_\gamma}:\D_\gamma^-Y\to\D_\gamma^+Y$. 
    \item Moreover, for different choices $(\theta_\gamma),(\theta'_\gamma)\in\BR^{\Gamma_0}$, the isometries $\sigma_{Y,\gamma}^{\theta_\gamma}$ and $\sigma_{Y,\gamma}^{\theta'_\gamma}$ are related by
        $$\sigma_{Y,\gamma}^{\theta_\gamma}(x)=\sigma_{Y,\gamma}^{\theta'_\gamma}(x+\theta_\gamma-\theta'_\gamma)$$
    where $x+\theta_\gamma-\theta_\gamma'$ is the point in $\D^-_\gamma Y$ that one reaches from $x$ by running for time one with signed speed $\theta_\gamma-\theta'_\gamma$.
\qed
    \end{itemize}
\end{prop}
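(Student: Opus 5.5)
Proposition \ref{prop FN} bundles three standard facts, and I would prove them in the order stated, using Fenchel--Nielsen coordinates throughout. First I fix a pants decomposition $P$ of $X_0$ that contains $\Gamma_0$. Every point of $\CT(X_0,\Gamma_0)\simeq\CT(X_0)$ then has coordinates $(\ell_p,\theta_p)_{p\in P}\in(\BR_{>0}\times\BR)^P$, and these coordinates define the real analytic structure (see \cite{Buser}). The pants curves of $Y_0$ are the curves of $P\setminus\Gamma_0$ together with the two boundary copies $\D^\pm_\gamma Y_0$ of each $\gamma\in\Gamma_0$. Hence $\CT(Y_0)$ has coordinates $(\ell_p,\theta_p)_{p\in P\setminus\Gamma_0}$ together with the boundary lengths $\ell(\D^\pm_\gamma)$. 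The subset $\CT_b(Y_0)$ is cut out by the linear conditions $\ell(\D^+_\gamma)=\ell(\D^-_\gamma)$, so it is an analytic submanifold diffeomorphic to $\BR^{3|\chi(X_0)|-|\Gamma_0|}$.

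For the second and third bullets I define $\Phi(Y,(\theta_\gamma))$ by gluing $\D^-_\gamma Y$ to $\D^+_\gamma Y$. The gluing map $\sigma^{\theta_\gamma}_{Y,\gamma}$ is the orientation reversing isometry that sends the foot of the Buser reference perpendicular on $\D^-_\gamma$, shifted by $\theta_\gamma$ along $\D^-_\gamma$, to the foot of the reference perpendicular on $\D^+_\gamma$. The third bullet then holds by construction. In coordinates, $\Phi$ is the identity on the $(\ell_p,\theta_p)_{p\in P\setminus\Gamma_0}$ entries and sends $(\ell_\gamma,\theta_\gamma)$ to $(\ell_\gamma,\theta_\gamma)$. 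Two points of care are needed here. One must check that Buser's twist parameter for $\gamma$ agrees with $\theta_\gamma$, possibly after an affine renormalisation by $\ell_\gamma$. One must also check that varying $\theta_\gamma$ over all of $\BR$, rather than modulo $\ell_\gamma$, produces distinct marked surfaces. Both follow from the fact that the Fenchel--Nielsen twist is a global coordinate. Consequently $\Phi$ and $\Phi^{-1}$ are analytic.

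For the first bullet, length functions of closed curves are analytic in Fenchel--Nielsen coordinates, and the same holds for orthogeodesic arcs. The reason is that the developing map is built from the elementary right-angled hexagons of the pants. Those hexagons depend analytically on the $\ell_p$, and they are glued by translations along the $\gamma$ whose lengths depend analytically on the $\theta_p$. Writing the arc as a product of these matrices in $\PSL_2(\BR)$ shows that $\cosh\ell_X(\alpha)$ is an analytic entry-combination. Then $d_{\alpha,\eta^1,\eta^2}$ is recovered from \eqref{eq thm 244 buser} by solving for $\cosh d$.

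The main obstacle is the last step: $d$ is determined by $\cosh d$ only up to sign, and arccosh fails to be analytic at $1$. To get around this, I would instead write $d$ directly as the signed translation distance along the lift of $\gamma$ between the feet of the lifted perpendiculars of $\eta^1$ and $\eta^2$. These feet are fixed points of analytic families of reflections, so $d$ is analytic without any sign ambiguity.
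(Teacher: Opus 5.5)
Your proposal is correct and follows the paper's route: extend $\Gamma_0$ to a pants decomposition and read $\Phi$ off Fenchel--Nielsen coordinates, after rescaling Buser's twist parameter by $\ell_\gamma$, so that $\Phi$ is essentially the identity in coordinates. Your treatment of $d_{\alpha,\eta^1,\eta^2}$ as a signed distance between feet of perpendiculars along a lift of $\gamma$ fills in a point the paper leaves implicit, since \eqref{eq thm 244 buser} alone only determines $\cosh d$; the only cosmetic issue is that shifting the reference foot on $\partial^-_\gamma Y$ by $+\theta_\gamma$ gives the third bullet with $\theta_\gamma-\theta'_\gamma$ replaced by $\theta'_\gamma-\theta_\gamma$, so you should shift by $-\theta_\gamma$ instead.
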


For the sake of concreteness, suppose now that $(X_0,\Gamma_0)$ is a single-note marimba, meaning that $\Gamma_0$ consists of a single curve. The homeomorphism $\Phi$ provided by Proposition \ref{prop FN}, being obtained from via Fenchel-Nielsen coordinates, has the property that the points $\Phi(Y,\theta)$ and $\Phi(Y,\theta+k\cdot\ell_Y(\D^-Y))$ differ just in their marking. Said differently, they represent isometric hyperbolic marimbas. Suppose now that the curve $\Gamma$ separates $X$ and let $Y',Y''$ be the two components of $Y=X\setminus\Gamma$. If there is an orientation preserving isometry $\phi:Y'\to Y'$ which acts on the boundary as an isometry of order $p$, then the points $\Phi(Y,\theta)$ and $\Phi(Y,\theta+\frac 1p\ell_Y(\D^-Y))$ also represent isometric hyperbolic marimbas. This is particularly important when $Y'$ is a torus: every one-holed torus has a non-trivial isometry, namely the hyper-elliptic involution. We record these facts:

\begin{named}{Addendum to Proposition \ref{prop FN}}
    Suppose that $\Gamma_0$ consists of a single curve. The following holds for all $(Y,\theta)\in\CT_b(Y_0)\times\BR$:
    \begin{itemize}
        \item The hyperbolic marimbas $\Phi(Y,\theta),\Phi(Y,\theta+k\cdot\ell_Y(\D^-Y))\in\CT(X_0,\Gamma_0)$ are isometric for all $k\in\BZ$.
        \item If $\Gamma_0$ separates $X_0$ and (at least) one of the components of $Y_0$ is a torus, then the hyperbolic marimbas $\Phi(Y,\theta),\Phi(Y,\theta+\frac 12\ell_Y(\D^-Y))$ are isometric.\qed
    \end{itemize}
\end{named}

\subsection*{A technical lemma}
Later on we will be interested in maps of the form
$$\CT_b(Y_0)\times\BR^\Gamma\to\BR^k,\ \ (Y,\theta)\mapsto(\ell_{\Phi(Y,\theta)}(\tau_i))_{i=1,\dots,k}$$
where $\tau_1,\dots,\tau_k$ are arcs in $(X,\Gamma)$, and where we $\Phi$ is as in Proposition \ref{prop FN}. Being the composition of analytics maps, such maps are analytic. However, the images of analytic maps are in general not analytic sets. This is the reason why we have to work a bit to prove the following lemma:

\begin{lem}\label{lem analytic function}
    With notation as above, suppose that $\Gamma_0$ consists of a single curve and for some $r\ge 2$ let $\tau_1,\dots,\tau_r$ geodesic 2-step arcs in $(X_0,\Gamma_0)$, that is that they only meet $\Gamma_0$ once in their interior. As above set $Y_0=X_0\setminus\Gamma_0$. There is an analytic map 
    $$P:\CT_b(Y_0)\times\BR_{>0}^{r}\to\BR^{r-1}$$
    such that 
    $$\left\{(Y,\cosh(\ell_{\Phi(Y,\theta)}(\tau_1)),\dots,\cosh(\ell_{\Phi(Y,\theta)}(\tau_r)))\in\CT_b(Y_0)\times\BR_{>0}^r\ \middle\vert\  \theta\in\BR
    \right\}=\left\{P=0\right\}$$
    where $\Phi$ is as in Proposition \ref{prop FN}.
\end{lem}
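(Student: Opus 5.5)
Each $2$-step arc $\tau_i$ crosses $\Gamma_0$ exactly once in its interior, so it is the juxtaposition of two $1$-step arcs $\eta_i^1,\eta_i^2$, that is, two proper arcs in $Y_0$. By \eqref{eq thm 244 buser},
$$\cosh(\ell_{\Phi(Y,\theta)}(\tau_i))=A_i(Y)\cosh\big(d_i(Y,\theta)\big)+B_i(Y),$$
where $A_i(Y)=\sinh(\ell_Y(\eta^1_i))\sinh(\ell_Y(\eta^2_i))>0$ and $B_i(Y)=\cosh(\ell_Y(\eta^1_i))\cosh(\ell_Y(\eta^2_i))$. Both depend only on $Y\in\CT_b(Y_0)$ and are analytic there, because the orthogeodesics $\eta^j_i$ live in $Y$. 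By the third item of Proposition \ref{prop FN}, changing the twist $\theta$ moves the gluing of $\D^-_\gamma Y$ to $\D^+_\gamma Y$ by a translation. Hence the signed displacement is affine in $\theta$:
$$d_i(Y,\theta)=\epsilon_i\theta+c_i(Y),\qquad \epsilon_i\in\{\pm1\},$$
with $c_i$ analytic on $\CT_b(Y_0)$. The sign $\epsilon_i$ records on which side of $\Gamma_0$ the arc starts, and it can be absorbed by replacing $c_i$ with $-c_i$, since $\cosh$ is even. Thus the set in question is the image of
$$(Y,\theta)\mapsto\big(Y,\,A_i(Y)\cosh(\theta+c_i(Y))+B_i(Y)\big)_{i=1,\dots,r},$$
a curve in each fibre $\{Y\}\times\BR^r_{>0}$, parametrized by $\theta\in\BR$.

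The key step is to eliminate $\theta$ explicitly. Write $x_i$ for the coordinates on $\BR^r_{>0}$. Set $u=\cosh\theta$ and $w=\sinh\theta$, so that $u^2-w^2=1$ and $u\ge1$. Then
$$x_i=B_i+A_i\cosh c_i\cdot u+A_i\sinh c_i\cdot w.$$
This is the affine map $\BR^2\to\BR^r$, $(u,w)\mapsto(x_i)$, restricted to the branch $u\geq 1$ of the hyperbola. Using $\theta\in\BR$ exactly parametrizes this branch. The $2\times r$ linear part has rows $(A_i\cosh c_i,\ A_i\sinh c_i)$.

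If rows $1$ and $2$ are independent, that is,
$$D_{12}(Y)=A_1A_2\sinh(c_2-c_1)\neq0,$$
then Cramer's rule on the first two equations gives $u,w$ as affine-analytic expressions in $x_1-B_1$ and $x_2-B_2$ with coefficients analytic in $Y$, divided by $D_{12}$. The image is then cut out by
$$P_1=D_{12}^2\,(u^2-w^2-1)=0$$
(cleared of denominators), together with the $r-2$ equations
$$P_j=D_{12}\,x_j-(\text{the affine expression predicted by }u,w)=0,\qquad j=3,\dots,r.$$
Together these give $P=(P_1,P_3,\dots,P_r)$ with values in $\BR^{r-1}$, analytic in $(Y,x)$. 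Including $Y$ as a coordinate in the set on the left of the lemma is exactly what makes this work: the fibrewise description with analytic coefficients directly gives an analytic zero set in the product.

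The main obstacle is making sure that $\{P=0\}$ contains nothing extra. There are two potential sources of spurious zeros. First, the hyperbola has a second branch $u\le-1$. I would exclude it using the positivity constraint $x\in\BR^r_{>0}$, or better, show that on that branch some $x_i=B_i-A_i\cosh(\ldots)$ would fall below $1$, and hence out of range of a $\cosh$. If this fails, I would replace $u^2-w^2=1$ by a condition encoding $u=\sqrt{1+w^2}$, which is analytic since $1+w^2>0$; that gives exactly the right branch. Second, the degenerate locus $D_{12}=0$ must be handled. I would argue that $c_1\neq c_2$ identically by reordering the $\tau_i$, choosing two arcs whose second segments leave $\Gamma_0$ at different points. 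If this is not automatic, I would instead state and use the lemma for a pair with $c_1\not\equiv c_2$.
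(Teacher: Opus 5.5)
Your elimination is correct wherever it applies. Keep the $u=\sqrt{1+w^2}$ form: positivity of $x$ does not rule out the branch $u\le-1$, whose points $x_i=B_i-A_i\cosh(t+c_i)$ can lie in $\BR^r_{>0}$. The genuine gap is the degenerate locus $\{D_{12}=0\}$, and neither of your proposed fixes closes it. Your $P$ needs $D_{12}(Y)\neq0$ for \emph{every} $Y\in\CT_b(Y_0)$, not merely $c_1\not\equiv c_2$. Where $c_1(Y)=c_2(Y)$, the first two rows are proportional and both Cramer numerators are multiples of $L=\frac{A_2}{A_1}(x_1-B_1)-(x_2-B_2)$. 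Hence $P_1=-A_1^2L^2$ and every $P_j$ is a multiple of $L$, so over that $Y$ the set $\{P=0\}$ is the whole hyperplane $\{L=0\}$ rather than a curve. Reordering cannot be relied on either. Your criterion (second segments leaving $\Gamma_0$ at different points) does not control $c_1-c_2$, which depends on both feet. For example, let $\tau_2=\bar\tau_1$ be $\tau_1$ run backwards; arcs are maps of $[0,1]$ here, so this is a different arc. Then $A_2=A_1$, $B_2=B_1$ and $\cosh(\theta+c_2)=\cosh(\theta+c_1)$ for all $\theta$, so $c_2\equiv c_1$ even though the second segments differ.

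In fact, at a point where \emph{all} $c_i$ coincide no analytic $P$ can work, so the statement itself needs an extra hypothesis. Suppose $c_1(Y_*)=\dots=c_r(Y_*)$. Then the fibre over $Y_*$ is the half-line $\{B+Ay:\ y\ge1\}$, with $A=(A_i(Y_*))_i$ and $B=(B_i(Y_*))_i$. If $P$ is analytic and vanishes on the set, then $y\mapsto P(Y_*,B+Ay)$ is real analytic on the interval $\{y>-\min_iB_i/A_i\}$. This interval contains $0$, because $B_i-A_i=\cosh(\ell_{Y_*}(\eta^1_i)-\ell_{Y_*}(\eta^2_i))>0$. The function vanishes on $[1,\infty)$, hence also at $y=0$, which is not in the set. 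So for $\{\tau_1,\bar\tau_1\}$ the lemma fails exactly as in the paper's remark on $r=1$.

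What your method does prove is the version in which the $r\times2$ matrix $M_Y$ with rows $A_i(\cosh c_i,\sinh c_i)$ has rank $2$ at every $Y$, i.e.\ the $c_i(Y)$ are never all equal. Even then you should not fix a pair. Instead, impose $x-B\in\mathrm{im}\,M_Y$ through the $3\times3$ minors of $(M_Y\,|\,x-B)$, set $(u,w)=(M_Y^TM_Y)^{-1}M_Y^T(x-B)$, and require $u=\sqrt{1+w^2}$; summing squares, at most $r-1$ equations suffice. You would then still have to check that Lemma \ref{lem worlpert analytic} only meets such configurations, since there the arcs are $\iota(\tau_0),\dots,\iota(\tau_k)$ for an arbitrary $\iota$.

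For comparison, the paper normalizes by $a_1$ and $a_i\cosh\delta_i$ and imposes one quadric $\tanh^2\delta_i\,y_1^2-(y_i-y_1)^2-\tanh^2\delta_i$ per arc. This never divides by zero. However, for $r\ge3$ its zero set allows independent signs in $y_i-y_1=\pm\tanh\delta_i\sqrt{y_1^2-1}$, so your linear equations for $j\ge3$ cut out the curve more accurately.
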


\begin{bem}
It is worth mentioning that Lemma \ref{lem analytic function} fails for $r=1$ because the set 
$$\{(Y,\cosh(\ell_{\Phi(Y,\theta)}(\tau)))\in\CT_b(Y_0)\times\BR_{>0}\ \vert\  \theta\in\BR\}$$ 
is a proper closed subset of $\CT_b(Y_0)\times\BR_{>0}$ with non-empty interior.    
\end{bem}

\begin{proof}
    For $i=1,\dots,r$ let $\alpha_i$ and $\beta_i$ be the two arcs we get from $\tau_i$ by cutting at the middle intersection with $\Gamma_0$, and set $d_i(Y)=d_{\tau_i,\alpha_i,\beta_i}(\Phi(Y,0))$ as in Proposition \ref{prop FN}. Since $\tau_i$ is a 2-step arc, we can think of $\alpha_i,\beta_i$ as proper arcs in $Y$. We get from Proposition \ref{prop FN} that the functions
    $$Y\mapsto\ell_Y(\alpha_i),\ Y\mapsto\ell_Y(\beta_i),\text{ and }Y\mapsto d_i(Y)$$
    on $\CT_b(Y_o)$ are all analytic. Moreover, we get from \eqref{eq thm 244 buser} that 
    $$\cosh(\ell_{\Phi(Y,\theta)}(\tau_i))=\sinh(\ell_Y(\alpha_i))\sinh(\ell_Y(\beta_i))\cosh( d_i(Y)+\theta)+\cosh(\ell_Y(\alpha_i))\cosh(\ell_Y(\beta_i))$$
    To relax notation write $(Y,\theta)$ instead of $\Phi(Y,\theta)$ and set 
    $$a_i(Y)=\sinh(\ell_Y(\alpha_i))\sinh(\ell_Y(\beta_i))\text{ and }b_i(Y)=\cosh(\ell_Y(\alpha_i))\cosh(\ell_Y(\beta_i)).$$
    With this notation the formula above reads
    $$\cosh(\ell_{(Y,\theta)}(\tau_i))=a_i(Y)\cosh(d_i(Y)+\theta)+b_i(Y)$$
    Invoking the addition theorem for cosh and writing
    $$\delta_i(Y)=d_i(Y)-d_1(Y)$$
    the previous formula becomes
    $$\cosh(\ell_{(Y,\theta)}(\tau_i))=a_i(Y)\big(\cosh(\delta_i(Y))\cosh(d_1(Y)+\theta)+\sinh(\delta_i(Y))\sinh( d_1(Y)+\theta)\big)+b_i(Y)$$
    When we write them all out for $i=1,\dots,r$ we get
\begin{align*}
    \cosh(\ell_{(Y,\theta)}(\tau_1))&=a_1(Y)\cosh(d_1(Y)+\theta)+b_1(Y)\\
    \cosh(\ell_{(Y,\theta)}(\tau_2))&=a_2(Y)\big(\cosh(\delta_2(Y))\cosh(d_1(Y)+\theta)+\sinh(\delta_2(Y))\sinh( d_1(Y)+\theta)\big)+b_2(Y)\\
    \dots &=\dots\\
    \cosh(\ell_{(Y,\theta)}(\tau_r))&=a_r(Y)\big(\cosh(\delta_r(Y))\cosh(d_1(Y)+\theta)+\sinh(\delta_r(Y))\sinh( d_1(Y)+\theta)\big)+b_r(Y)
\end{align*}
Observing that $a_i(Y)>0$, consider the affine map
$$A_Y:\BR^r\to\BR^r,\ \left(\begin{array}{c}x_1\\ x_2\\ \ldots \\ x_r \end{array}\right)\mapsto\left(\begin{array}{c}\frac 1{a_1(Y)}(x_1-b_1(Y)) \\ \frac 1{a_2(Y)\cosh(\delta_2(Y))}(x_2-b_2(Y))\\ \dots \\ \frac 1{ a_r(Y)\cosh(\delta_r(Y))}(x_r-b_r(Y)) \end{array}\right).$$
Applying $A_Y$ to the column vector with entries $\cosh(\ell_{(Y,\theta)}(\tau_i))$ for $i=1,\dots,r$ one gets the following:
$$A_Y\left(\begin{array}{c}\cosh(\ell_{(Y,\theta)}(\tau_1))\\ \cosh(\ell_{(Y,\theta)}(\tau_2))\\ \ldots \\ \cosh(\ell_{(Y,\theta)}(\tau_r)) \end{array}\right)=\left(\begin{array}{c}\cosh(d_1(Y)+\theta)\\ \cosh( d_1(Y)+\theta)+\tanh(\delta_2(Y))\sinh(d_1(Y)+\theta)\\ \ldots \\ \cosh( d_1(Y)+\theta)+\tanh(\delta_r(Y))\sinh(d_1(Y)+\theta) \end{array}\right)
$$
Now, when we vary $\theta$, the set of vectors on the right is nothing other than the vanishing set of the function
$$F_Y:\BR_{>0}\times\BR^r\to\BR^{r-1},\ F_Y:\left(\begin{array}{c}x_1 \\ x_2 \\ \vdots \\ x_r \end{array}\right)\mapsto\left(\begin{array}{c}\tanh(\delta_2(Y))^2\cdot x_1^2-(x_2-x_1)^2-\tanh(\delta_2(Y))^2 \\ \vdots \\ \tanh(\delta_r(Y))^2\cdot x_1^2-(x_r-x_1)^2-\tanh(\delta_r(Y))^2 \end{array}\right)$$
In other words, we have proved that 
\begin{multline*}
    \left\{(Y,(\cosh(\ell_{(Y,\theta)}(\tau_i)))_{i=1,\dots,k})\middle\vert Y\in\CT_b(Y_0),\ \theta\in\BR\right\}=\\
    =\left\{(Y,(x_i)_i)\in\CT_b(Y_0)\times\BR_{>0}^r\middle\vert F_Y\left(A_Y\left((x_i)_i\right)\right)=0\right\}  
\end{multline*}
Since entries of $A_Y$ and $F_Y$ depend analytically on $Y$, the claim follows when we set
$$P(Y,x_1,\dots,x_r)=F_Y\left(A_Y(x_i)_{i=1,\dots,r}\right).$$        
\end{proof}

\section{$k$-step orthospectrum of $(X,\Gamma)$}\label{sec hear orthospectrum}

We remind the reader that we always assume that arcs $\alpha$ in a hyperbolic marimba $(X,\Gamma)$ are essential and meet $\Gamma$ as few times as possible in their homotopy classes. Keep also in mind that the length $\ell_X(\alpha)$ of the homotopy class of $\alpha$ is the length of its unique orthogeodesic representative $\alpha_X$. Finally, recall that a $k$-step arc $\alpha:([0,1],\{0,1\})\to(X,\Gamma)$ is an arc with
$$k+1=\vert[0,1]\cap\alpha^{-1}(\Gamma)\vert.$$
In this section we will be interested in the multiset $\CG_k(X,\Gamma)$ of lengths of all homotopy classes of $k$-step arcs. Here "multiset" refers to the fact that if there are $r$ arcs of length $l$, then $l$ appears $r$ times in $\CG_k(X,\Gamma)$. Anyways, we refer to $\CG_k(X,\Gamma)$ as the {\em $k$-step orthospectrum} of the hyperbolic marimba $(X,\Gamma)$. 

Our goal is to show that the melody ${\bf m}_{(X,\Gamma)}(v)$ of a random $v\in T^1X$ determines the $k$-step orthospectrum of $(X,\gamma)$ for all $k$. In light of the examples in Section \ref{sec examples} we have to be slightly careful with the statement:

\begin{prop}\label{prop hear orthospectrum}
    Let ${\bf m}\in(\Gamma\times\BR)^\BZ$ be such that there are a hyperbolic marimba $(X,\Gamma)$ and a random vector $v\in\CR(X)$ with ${\bf m}={\bf m}_{(X,\Gamma)}(v)$. If we know $\chi(X)$, then ${\bf m}$ determines the $k$-step orthospectrum $\CG_k(X,\gamma)$ for all $k\ge 1$.
\end{prop}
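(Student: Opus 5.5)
Since ${\bf m}$ comes from a random vector, Proposition \ref{prop fequencies tell it all} lets us read off from ${\bf m}$ the frequency of every motif, and hence the measure $\FM_{(X,\Gamma)}$. The plan is to detect each $k$-step orthogeodesic arc as a singular concentration of this measure near a specific $k$-step motif. Fix $k\ge1$, a sequence of notes $(\eta_0,\dots,\eta_k)$ and a time $L>0$. The starting object is the function
$$F_{k,(\eta_i)}(L,\epsilon)=\sum_{t_1<\dots<t_{k}=\,\cdot\,}\FF^{(X,\Gamma)}_{[(\eta_i),(t_i),\epsilon]},$$
i.e.\ the frequency with which the melody plays notes $\eta_0,\dots,\eta_k$ consecutively with total duration $s_k\in[L,L+\epsilon]$ and no restriction on the intermediate times. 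This is obtained from motif frequencies by a countable union and limit over partitions of the intermediate times. Via Lemma \ref{lem distribution of vectors over time} and Santal\'o's formula \eqref{eq practical Santalo}, knowing $\chi(X)$ converts frequencies into honest measures: $\pi^2|\chi(X)|$ times frequency equals the $|\sin\theta|\,d\theta\,dx$-measure of the set of $(x,\theta)\in T^1X\vert_{\eta_0}^*$ whose geodesic has its $k$-th hit on $\eta_k$ at time in $[L,L+\epsilon]$, after hitting $\eta_1,\dots,\eta_{k-1}$.

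Next I would analyse this set homotopy class by homotopy class. Geodesic segments starting on $\Gamma$ and making exactly $k$ hits are partitioned by the homotopy class of $k$-step arc they represent, which is determined by lifting to $\BH^2$ and the resulting sequence of lifts of $\Gamma$. For a fixed class $\alpha$, the set $S_\alpha$ of $(x,\theta)$ realising it is an open subset of $\Gamma\times(0,\pi)$. On it the length function $\lambda(x,\theta)$ of the segment is analytic and attains its minimum $\ell_X(\alpha)$ exactly at the orthogeodesic $(x_\alpha,\pi/2)$. Indeed, the distance between two disjoint geodesics in $\BH^2$ is realised uniquely by the common perpendicular. Near that point the Hessian is nondegenerate, since the function is $\cosh\lambda=$ a positive-definite expression in Fermi-type coordinates, as in \eqref{eq thm 244 buser}. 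So the measure of $\{\lambda\in[\ell,\ell+\epsilon]\}\cap S_\alpha$ behaves like $c_\alpha\epsilon$ for $\ell$ slightly above $\ell_X(\alpha)$, and vanishes for $\ell<\ell_X(\alpha)$. More precisely, the distribution function $G_\alpha(\ell)=\int_{S_\alpha\cap\{\lambda\le \ell\}}|\sin\theta|$ is $0$ for $\ell<\ell_X(\alpha)$ and grows like $c\,(\ell-\ell_X(\alpha))$ with a universal constant. I expect $c$ to depend on $\ell_X(\alpha)$ only through an explicit formula, computed from the quadratic approximation of $\cosh\lambda$ in hyperbolic geometry.

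Summing over classes, the function $G(\ell)=\sum_\alpha G_\alpha(\ell)$, which is determined by ${\bf m}$ through the frequencies, is a sum of functions each having a "kink" at $\ell_X(\alpha)$. Discreteness of the $k$-step orthospectrum (finitely many classes below any length, by compactness) means that on each bounded interval only finitely many $G_\alpha$ are nonzero. The orthospectrum, with multiplicities, is then recovered inductively. Start from the smallest $\ell$ where $G>0$; its right derivative there is $m\cdot c(\ell)$, which recovers the multiplicity $m$. Subtract the contribution of those classes and continue. For this, $G_\alpha$ must be determined by $\ell_X(\alpha)$ alone, not just near the kink. That holds because the set of $(x,\theta)$ starting on one lift of $\Gamma$ and hitting another lift at distance $d$ has $\lambda$-distribution computable in $\BH^2$ depending only on $d$, up to the constraint of not hitting other lifts in between. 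If that constraint spoils exact global determination, I would instead use only the local behaviour: jumps in the second derivative, or the leading-order onset $c(\ell-\ell_X(\alpha))$ at each kink, with multiplicity read from the jump in slope.

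The main obstacle is this last step. One must justify that the contributions of different classes can be separated, i.e.\ that the onset of $G_\alpha$ at $\ell_X(\alpha)$ is a genuine singularity (a jump in $G'$) not cancelled by smooth contributions from other classes. This also needs the onset to be strictly of order $\ell-\ell_X(\alpha)$ with a known positive coefficient, which follows from the Morse nondegeneracy of the orthogeodesic minimum in dimension $2$.
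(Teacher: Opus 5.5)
Your strategy is the paper's: read off from ${\bf m}$ the distribution of the $k$-step durations $t_{i+k}-t_i$, split it by homotopy class of $k$-step arcs, and peel off the classes in increasing length. The paper reads this distribution directly, via the equidistribution of hitting vectors in Lemma~\ref{lem hit distribution notes}, rather than through motif frequencies. Also, the factor converting frequency into $\vert\sin\theta\vert\,d\theta\,dx$-measure is $4\pi^2\vert\chi(X)\vert$, not $\pi^2\vert\chi(X)\vert$.

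There is, however, a genuine gap. The step you flag as the main obstacle is left open, and the whole argument rests on it. You do not establish that $G_\alpha$ is a function of $\ell_X(\alpha)$ alone. Your fallback via slope jumps is not justified either. Without control on the shape of $S_\beta$, you cannot exclude further non-smooth behaviour of some $G_\beta$ above $\ell_X(\beta)$, for instance from boundary points of $S_\beta$ where the length stays finite. A downward slope jump of that kind could mask or mimic the onset of another class. As it stands, neither the multiplicities nor the lengths beyond the first are determined.

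The missing idea is that the constraint of ``not hitting other lifts in between'' is vacuous. Lift the orthogeodesic of $\alpha$ to a segment from a lift $\tilde\gamma_0$ to a lift $\tilde\gamma_1$ of $\Gamma$. Two geodesics in $\BH^2$ meet at most once. Hence a geodesic segment from a point of $\tilde\gamma_0$ to a point of $\tilde\gamma_1$ crosses a lift $L$ of $\Gamma$ if and only if $L$ separates $\tilde\gamma_0$ from $\tilde\gamma_1$. So it crosses exactly the $k-1$ lifts crossed by the orthogeodesic, and no others.

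Two consequences follow. Every unit vector over $\tilde\gamma_0$ whose geodesic reaches $\tilde\gamma_1$ projects to a vector of $S_\alpha$ whose $k$-th hit happens exactly there. Conversely, every vector of $S_\alpha$ has exactly one lift over $\tilde\gamma_0$ that flows into $\tilde\gamma_1$, because nontrivial elements of the stabilizer of $\tilde\gamma_0$ move $\tilde\gamma_1$. This identifies $S_\alpha$, preserving the measure, with the set of all vectors over $\tilde\gamma_0$ hitting $\tilde\gamma_1$. Therefore $G_\alpha=\Psi(\ell_X(\alpha),\cdot)$ for a universal function $\Psi$ that depends only on $d(\tilde\gamma_0,\tilde\gamma_1)=\ell_X(\alpha)$. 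This is the content of Lemma~\ref{lem cumulative distribution function of arc}.

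Once you have this, no Hessian or slope analysis is needed. $\Psi(L,\ell)$ vanishes for $\ell\le L$ and is positive for $\ell>L$. So the $(r+1)$-st length is $\inf\{\ell:\ G(\ell)-\sum_{i\le r}\Psi(L_i,\ell)>0\}$. Multiplicities take care of themselves, because you subtract one known copy at a time.
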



Before launching the proof of Proposition \ref{prop hear orthospectrum}, let us comment briefly on the etymology of the word {\em $k$-step orthospectrum} of a hyperbolic marimba $(X,\Gamma)$. The reason for this name is namely that when $Y$ is a compact surface with totally geodesic boundary, then the set (with multiplicity) of lengths of orthogeodesic arcs in $Y$ is the {\em orthospectrum $\CG(Y)$ of $Y$}. Formally, these two notions are related as follows. Let $Y$ be the metric completion of $X\setminus\Gamma$. Every $1$-step orthogeodesic arc in $(X,\Gamma)$ determines uniquely an orthogeodesic arc in $Y$, and vice versa, meaning that we have
$$\CG(Y)=\CG_1(X,\Gamma).$$
In particular we get directly from Proposition \ref{prop hear orthospectrum} the following fact that we state here for further reference:

\begin{kor}\label{kor hear orthospectrum}
    Let ${\bf m}\in(\Gamma\times\BR)^\BZ$ be such that there are a hyperbolic marimba $(X,\Gamma)$ and a random vector $v\in\CR(X)$ with ${\bf m}={\bf m}_{(X,\Gamma)}(v)$. If we know $\chi(X)$, then ${\bf m}$ determines the orthospectrum $\CG(Y)$ of the metric completion $Y$ of $X\setminus\Gamma$.\qed
\end{kor}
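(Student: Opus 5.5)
We will prove Proposition \ref{prop hear orthospectrum} by reading the $k$-step orthospectrum off the frequencies of motifs of length $k$. The melody ${\bf m}$ determines all frequencies by Proposition \ref{prop fequencies tell it all}, and hence the measure $\FM_{(X,\Gamma)}$. Together with $\chi(X)$ it also determines the normalization $4\pi^2\vert\chi(X)\vert$ relating $\Liouville_X$ to $\kinematic_X$.

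\textbf{Step 1: counting function as a measure.} For $L>0$, consider the set of vectors $(x,\theta)\in T^1X\vert_\Gamma^*$ whose geodesic hits $\Gamma$ for the $k$-th time (after time $0$) at a time $\le L$. We encode this through the function
$$F_k(L)=\int_{T^1X\vert_\Gamma^*}\chi_{\{t_k(x,\theta)\le L\}}\,\vert\sin\theta\vert\, d\theta\, dx ,$$
where $t_k(x,\theta)$ is the time of the $k$-th subsequent hit of $\Gamma$. This set is a finite union of motif sets: it consists of all motifs of length $k$ with $t_k\le L$, partitioned into small time-windows. By Lemma \ref{lem distribution of vectors over time} and the computation preceding Proposition \ref{prop fequencies tell it all}, $F_k(L)$ equals $4\pi^2\vert\chi(X)\vert$ times a sum of frequencies. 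It is therefore determined by ${\bf m}$ and $\chi(X)$. Since motifs also record the notes $\eta_i$, we may further split $F_k$ according to the sequence of components visited, though this is not needed.

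\textbf{Step 2: decomposition over homotopy classes.} Each $(x,\theta)\in T^1X\vert_\Gamma^*$ determines a geodesic segment up to its $k$-th hit. This segment is a $k$-step arc in a unique homotopy class $\alpha$ (it is geodesic, so it meets $\Gamma$ minimally). Hence
$$F_k(L)=\sum_{\alpha}m_\alpha(L),$$
where the sum runs over homotopy classes of $k$-step arcs and $m_\alpha(L)$ is the $\vert\sin\theta\vert d\theta dx$-measure of the vectors in class $\alpha$ whose arc has length $\le L$. Lifting to $\BH^2$, the class $\alpha$ corresponds to a pair of disjoint geodesics $\tilde\gamma,\tilde\gamma'$ (the first and last lifted components) at distance $\ell_X(\alpha)$. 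The vectors in class $\alpha$ are exactly those based on $\tilde\gamma$ pointing towards $\tilde\gamma'$ whose geodesic crosses the intermediate lifts in the prescribed way. Crossing $\tilde\gamma'$ forces crossing the intermediate lifts, since these separate $\tilde\gamma$ from $\tilde\gamma'$.

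So $m_\alpha(L)=h(\ell_X(\alpha),L)$, where $h(d,L)$ is the measure of the set of pairs $(x,\theta)$, with $x\in\tilde\gamma$, whose geodesic hits $\tilde\gamma'$ within time $L$. This is a universal function, computable in $\BH^2$. It vanishes for $L<d$, behaves like $c\,(L-d)$ for $L$ slightly above $d$ (up to an explicit leading term), and is strictly increasing in $L$ beyond $d$. Thus
$$F_k(L)=\sum_{\ell\in\CG_k(X,\Gamma)}h(\ell,L).$$
Since $\CG_k$ is discrete with finitely many elements below any bound, we recover $\CG_k$ inductively. The smallest length is where $F_k$ first becomes positive. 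Its multiplicity is read off the leading coefficient of $F_k$ just past this point. We then subtract the corresponding multiple of $h(\ell_1,\cdot)$ and iterate.

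\textbf{Main obstacle.} The delicate step is justifying that the function $L\mapsto h(d,L)$ has singular behavior exactly at $L=d$ with a computable, nonzero leading coefficient, independent of everything else. This is what makes the inductive peeling well defined. A second obstacle is that distinct classes with equal length must be counted with the correct multiplicity. This relies on the fact that vectors in different homotopy classes are disjoint, which holds by uniqueness of geodesics.

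I would first compute $h$ explicitly using the hyperbolic trigonometry of \eqref{eq thm 244 buser}: for $x$ at signed distance $s$ from the foot of the common perpendicular, the condition to hit $\tilde\gamma'$ at time $\le L$ becomes an explicit inequality in $(s,\theta)$. Alternatively, it may be cleaner to differentiate $F_k$ in $L$: the derivative is a locally finite sum of functions supported on $[\ell,\infty)$, each with a known jump or singularity at $\ell$. From this, $\CG_k(X,\Gamma)$ is recovered as the set of points where the derivative fails to be analytic, weighted by the size of the jump.
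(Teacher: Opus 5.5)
Your proposal is correct and follows essentially the paper's route. You recover from ${\bf m}$ and $\chi(X)$ the distribution of the $k$-th return time to $\Gamma$; the paper does this through the empirical measures $\nu_n$ and the audible gaps $t_{i+k}-t_i$ rather than through motif frequencies. You then split this distribution over homotopy classes of $k$-step arcs, whose contributions depend only on their lengths by lifting to $\BH^2$, and peel off the lengths inductively, with the corollary being the case $k=1$ via $\CG(Y)=\CG_1(X,\Gamma)$.

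The step you flag as the main obstacle is not needed. As in the paper, you can subtract one copy of $h(\ell,\cdot)$ at a time and take the infimum of where the remainder becomes positive; this handles multiplicities using only that $h(d,L)=0$ for $L\le d$ and $h(d,L)>0$ for $L>d$.
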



Starting with the proof of Proposition \ref{prop hear orthospectrum}, fix $k\ge 1$, and suppose from now on that $(X,\Gamma)$ is a hyperbolic marimba of which we know $\chi(X)$, and that $v\in\CR(X)$ is such that
$${\bf m}={\bf m}_{(X,\Gamma)}(v)=(\kappa_i^\Gamma,t_i^\Gamma)_i.$$
To simplify notation, we will from now on write $(t_i)_i=(t_i^\Gamma)_i$. We will prove that $\chi(X)$ and the sequence $(t_i)_i$ determine $\CG_k(X,\Gamma)$. 

The first step is to prove that the probability measures
$$\nu_n=\frac 1n\sum_{i=1}^n\delta_{\rho_v(t_i)}$$
on $T^1X\vert_\Gamma$ converge as $n\to\infty$ to the probability measure $\lambda$ satisfying
    $$\int_{T^1X\vert_\Gamma}f(v)\ d\lambda(v)=\frac 1{4\cdot \ell_X(\Gamma)}\int_\Gamma\int_{\BS^1}f(x,\theta)\vert\sin(\theta)\vert\ d\theta dx$$
for every measurable function $f$ on $T^1X\vert_\Gamma$. Here, $\delta_z$ is the Dirac measure centered at $z$, and convergence means that
$$\lim_n\int_{T^1X\vert_\Gamma} f\ d\nu_n=\int_{T^1X\vert_\Gamma}f\ d\lambda$$
for every continuous function $f\in C^0(T^1X\vert_\Gamma)$.

\begin{lem}\label{lem hit distribution notes}
    As $n\to\infty$, the measures $\nu_n$ converge to the probability measure $\lambda$.
\end{lem}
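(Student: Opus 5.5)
The plan is to derive the statement from Lemma \ref{lem distribution of vectors over time} together with the asymptotic \eqref{eq note counting proportional to time}, by converting counts up to a time $T$ into counts up to a number of notes $n$. Work in the coordinates $T^1X\vert_\Gamma\simeq\Gamma\times[0,2\pi]$. Let $\mathcal B$ be the class of \emph{boxes} $V=I\times J$, where $I$ is an open subarc of a component of $\Gamma$ and $J$ is an open interval with $\overline J\subset(0,\pi)$ or $\overline J\subset(\pi,2\pi)$. Such a $V$ lies in $T^1X\vert_\Gamma^*$ and its boundary has zero $d\theta\, dx$-measure, so Lemma \ref{lem distribution of vectors over time} applies to it.

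First step: the convergence for boxes. For $V\in\mathcal B$ set $N_V(T)=\#\{t\in[0,T]:\rho_v(t)\in V\}$. Lemma \ref{lem distribution of vectors over time} gives $N_V(T)/T\to\frac{1}{4\pi^2\vert\chi(X)\vert}\int_V\vert\sin\theta\vert\,d\theta dx$, and \eqref{eq note counting proportional to time} gives $N_\Gamma(v,T)/T\to\frac{\ell_X(\Gamma)}{\pi^2\vert\chi(X)\vert}$. The hits $t_1,\dots,t_n$ are exactly the hits in $(0,t_n]$, so
$$n\,\nu_n(V)=N_V(t_n)+O(1)\qquad\text{and}\qquad n=N_\Gamma(v,t_n)+O(1),$$
where the $O(1)$ accounts for the hit at time $t_0\le 0$ and $t_n\to\infty$. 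Dividing the two limits evaluated along $T=t_n$ gives
$$\nu_n(V)\to\frac{1}{4\ell_X(\Gamma)}\int_V\vert\sin\theta\vert\,d\theta dx=\lambda(V).$$
The same argument applies to finite disjoint unions of boxes.

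Second step: from boxes to continuous test functions. Fix $f\in C^0(T^1X\vert_\Gamma)$ and $\varepsilon>0$. Let $K_\delta=\Gamma\times\big([\delta,\pi-\delta]\cup[\pi+\delta,2\pi-\delta]\big)$. I choose $\delta$ so that $\lambda$ of the complement of $K_\delta$ is less than $\varepsilon$. This is possible because $\vert\sin\theta\vert$ vanishes at $0$ and $\pi$, so $\lambda$ gives small mass to neighbourhoods of the tangent directions.

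To control $\nu_n$ on the same neighbourhood, I cover $K_\delta$ up to a $\lambda$-null grid by finitely many disjoint boxes $V_1,\dots,V_m\in\mathcal B$ on each of which $f$ oscillates by less than $\varepsilon$, using uniform continuity of $f$. The points of $\rho_v$ on $\Gamma$ are never tangent to $\Gamma$, since $\Gamma$ is geodesic and $\rho_v$ is not a component of $\Gamma$, and they never lie on the finitely many grid lines for $n$ large. The last point needs care; it is handled by choosing the grid so that its $\lambda$-mass is zero and then noting that thin boxes around the grid lines have asymptotically small $\nu_n$-mass, by the first step. Since $\nu_n(\bigcup V_j)\to\lambda(\bigcup V_j)>1-\varepsilon$, we get $\nu_n(\text{complement})<2\varepsilon$ for $n$ large. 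Approximating $f$ by the step function $\sum_j f(p_j)\chi_{V_j}$, with $p_j\in V_j$, then gives $\limsup_n\vert\int f\,d\nu_n-\int f\,d\lambda\vert\le C\varepsilon(1+\Vert f\Vert_\infty)$. Letting $\varepsilon\to0$ proves the lemma.

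The main obstacle is exactly this passage from boxes to continuous functions near the tangent directions. Lemma \ref{lem distribution of vectors over time} is only stated on $T^1X\vert_\Gamma^*$, so the mass of $\nu_n$ near $\theta\in\{0,\pi\}$ cannot be estimated directly. I would handle it as above, by complementation: good convergence on a large compact family of boxes, combined with total mass $1$, controls the leftover region. The reindexing from time $T$ to the number $n$ of notes in the first step is routine.
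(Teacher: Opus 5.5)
Your proof is correct and follows essentially the paper's route: Lemma \ref{lem distribution of vectors over time} on boxes away from the tangent directions, \eqref{eq note counting proportional to time} to pass from time $T$ to the number $n$ of notes, and a Portmanteau-type passage to continuous test functions, which you carry out by hand, with the complementation bound making explicit the control near $\theta\in\{0,\pi\}$ that the paper's appeal to Portmanteau leaves implicit. One blemish: the claim that the hits avoid the grid lines for large $n$ is not justified for a random vector, but you do not need it, because the grid lies in the complement of $\bigcup_j V_j$, whose $\nu_n$-mass you already bound by $1-\nu_n(\bigcup_j V_j)$ (and with the paper's convention $t_0\ge 0$ rather than $t_0\le 0$, which only affects your $O(1)$ term).
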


\begin{proof}
    Noting that $\lambda$ and the standard Lebesgue measure on $T^1X\vert_\Gamma$ are in the same measure class, we get from Lemma \ref{lem distribution of vectors over time} that for every $V\subset T^1X\vert_\Gamma^*$ open with $\lambda(\overline V\setminus V)=0$ we have 
    \begin{align*}
    \frac 1{4\pi^2\vert\chi(X)\vert}\int_V\vert\sin(\theta)\vert\ d\theta dx
    &=\lim_{T\to\infty}\frac 1T\vert\{t\in[0,T]\text{ with }\rho_v(t)\in V\}\vert\\
    &=\lim_{T\to\infty}\frac 1T\left(\sum_{1\le i,\ t_i\le T}\delta_{\rho_v(t_i)}\right)(V).  \end{align*}
    The classic Portmanteau theorem implies thus that
    \begin{equation}\label{eq I am drinking a beer}
        \lim_{T\to\infty}\frac 1T\sum_{1\le i,\ t_i\le T}\delta_{\rho_v(t_i)}=\frac 1{4\pi^2\vert\chi(X)\vert}\cdot\int\vert\sin(\theta)\vert\ d\theta dx.
    \end{equation}
    Note now that $N_\Gamma(v,T)$ as in \eqref{eq note counting} is nothing other than the number of notes played by time $T$, that is the number of summands in the sum within the limit in \eqref{eq I am drinking a beer}. Recall also that by \eqref{eq note counting proportional to time} we have
    $$N_\Gamma(v,T)\sim \frac 1{\pi^2\vert\chi(X)\vert}\ell_X(\Gamma)\cdot T$$  
    where $\sim$ means that the ratio between both sides tends to $1$ as $T\to\infty$. In particular, solving for $T\sim \frac{\pi^2\vert\chi(X)\vert}{\ell_X(\Gamma)}\cdot N_\Gamma(v,T)$ we can rewrite the previous equality as follows:
    $$\lim_{T\to\infty}\frac{\ell_X(\Gamma)}{\pi^2\vert\chi(X)\vert\cdot N_\Gamma(v,T)}
    \sum_{1\le i\le N_\Gamma(v,T)}\delta_{\rho_v(t_i)}=\frac 1{4\pi^2\vert\chi(X)\vert}\cdot \int\vert\sin(\theta)\vert\ d\theta dx.$$
    This can be finally rewritten as 
    $$\lim_{T\to\infty}\frac{1}{N_\Gamma(v,T)}
    \sum_{1\le i\le N_\Gamma(v,T)}\delta_{\rho_v(t_i)}=\frac 1{4\cdot\ell_X(\Gamma)}\cdot \int \vert\sin(\theta)\vert\ d\theta dx.$$
    The claim follows when we notice that we can write this as 
    $$\lim_{n\to\infty}\frac{1}{n}
    \sum_{1\le i\le n}\delta_{\rho_v(t_i)}=\lambda.$$
    We have proved Lemma \ref{lem hit distribution notes}.
\end{proof}

Let's consider now the function
\begin{equation}\label{eq random variable tau}
    \begin{split}
    \tau&:T^1X\vert_\Gamma\to[0,\infty],\\
    \tau&:v\mapsto\inf\left\{t>0\ \middle\vert\ \begin{array}{l}
    \text{there are }0=t_0<t_1<\dots<t_k<t_{t+1}=t\\ \text{with }\rho_v^X(t_i)\in T^1X\vert_\Gamma\text{ for }i=0,\dots,k+1\end{array}\right\}
    \end{split}
\end{equation}
which we will think of as a random variable on the probability spaces $(T^1X\vert_\Gamma,\lambda)$ and $(T^1X\vert_\Gamma,\nu_n)$. We denote the corresponding cumulative distribution functions by 
$$\phi_\lambda(T)=\lambda(\tau^{-1}(0,T])\text{ and }\phi_n(T)=\nu_n(\tau^{-1}(0,T]).$$
More generally we will also consider the cumulative distribution functions
$$\phi_\lambda^U(T)=\lambda(U\cap \tau^{-1}(0,T])\text{ and }\phi_n^U(T)=\nu_n(U\cap \tau^{-1}(0,T])$$
of the restriction of $\tau$ to suitable subsets $U\subset T^1X$.

\begin{lem}\label{lem cumulative distribution functions converge}
    Let $U\subset T^1X\vert_\Gamma$ be an open set with $\lambda(\D U)=0$. The cumulative distribution function $t\mapsto\phi^U_\lambda(t)$ is continuous and we have
    $$\lim_{n\to\infty}\phi^U_n(t)=\phi^U_\lambda(t)$$
    uniformly on compacta in $\BR_{\ge 0}$.
\end{lem}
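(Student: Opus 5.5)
We plan to derive this from Lemma \ref{lem hit distribution notes} via the Portmanteau theorem applied to the sets $U_T=U\cap\tau^{-1}(0,T]$, together with a monotonicity argument that upgrades pointwise convergence to local uniform convergence.

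\textbf{Continuity of $\phi^U_\lambda$.} This is equivalent to $\lambda(U\cap\tau^{-1}(T))=0$ for every $T$. On the open set where $\tau$ is finite and the geodesic is transverse to $\Gamma$ at all intermediate hits, $\tau$ is real analytic. Indeed, it is locally the time at which the geodesic arc from $v$ meets a fixed lift of a component of $\Gamma$, after exactly $k$ intermediate transverse crossings, and this is analytic by the implicit function theorem. The function $\tau$ is not locally constant: varying the angle $\theta$ at a fixed base point changes the hitting time strictly. Hence each level set $\tau^{-1}(T)$ is a proper analytic subset, so it has Lebesgue measure zero. The remaining vectors have $\lambda$-measure zero:
\begin{itemize}
\item those tangent to $\Gamma$, where the weight $\vert\sin\theta\vert$ vanishes;
\item those whose geodesic becomes tangent to $\Gamma$ at an intermediate time, a codimension-one condition;
\item those with $\tau=\infty$.
\end{itemize}
Since $\lambda$ is absolutely continuous with respect to Lebesgue measure, $\lambda(\tau^{-1}(T))=0$ follows.

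\textbf{Pointwise convergence.} Fix $T>0$ and take $A=U\cap\tau^{-1}(0,T]$. Its boundary is contained in $\D U\cup\tau^{-1}(T)\cup D$, where $D$ is the $\lambda$-null discontinuity set of $\tau$ described above. The main point is that, away from $D$, $\tau$ is continuous, so $\tau^{-1}(0,T)$ is open and $\tau^{-1}[0,T]$ is closed there.

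Two refinements are needed:
\begin{itemize}
\item \emph{Discontinuity from tangency.} Near a vector whose orbit is tangent to $\Gamma$ at an intermediate time, the number of hits in a given time window can jump. These vectors form a null set, so they can be absorbed into $D$.
\item \emph{The value $\tau=0$.} The set $\tau^{-1}(0,T]$ excludes $\tau=0$. Since $\tau\ge$ (k+1 times) $\tau_{\min}$ on non-tangent vectors, $\tau$ is bounded away from $0$ there, so this causes no problem.
\end{itemize}

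Hence $\lambda(\D A)=0$. Lemma \ref{lem hit distribution notes} gives weak convergence $\nu_n\to\lambda$, and the Portmanteau theorem then yields $\nu_n(A)\to\lambda(A)$, that is, $\phi_n^U(T)\to\phi^U_\lambda(T)$.

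\textbf{Uniformity on compacta.} This follows from the standard Pólya-type argument. The functions $\phi^U_n$ and $\phi^U_\lambda$ are nondecreasing in $T$, and $\phi^U_\lambda$ is continuous, hence uniformly continuous on a compact interval $[0,T_0]$. Given $\varepsilon>0$, choose a finite grid $0=s_0<\dots<s_m=T_0$ on which $\phi^U_\lambda$ varies by less than $\varepsilon$ between consecutive points. Pointwise convergence holds at all grid points simultaneously for large $n$. Monotonicity then sandwiches $\phi^U_n(T)$ within $2\varepsilon$ of $\phi^U_\lambda(T)$ for every $T\in[0,T_0]$.

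\textbf{Main obstacle.} The delicate step is the measure-theoretic control of $\tau$. One must show that the discontinuity set $D$, which comes from tangencies at intermediate hits, is $\lambda$-null, and that every level set of $\tau$ is $\lambda$-null. I would handle both through the analytic local description of $\tau$, argued as in Lemma \ref{lem discreteness of music}.
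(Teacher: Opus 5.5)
Your proposal is correct and is essentially the paper's proof. Analyticity of $\tau$ on $\tau^{-1}(0,\infty)$, where it is nowhere locally constant, makes its level sets $\lambda$-null, which gives continuity of $\phi^U_\lambda$. The Portmanteau theorem with Lemma~\ref{lem hit distribution notes} then gives convergence on sets whose boundary lies in $\D U\cup\tau^{-1}(\{a,b\})$. Monotonicity plus continuity upgrades this to locally uniform convergence. The paper uses open intervals $(a,b)$ together with the vanishing of all the distribution functions near $0$, where you treat $(0,T]$ directly; the two are equivalent.

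The tangency issue you flag as the main obstacle is vacuous. A geodesic tangent to the closed geodesic $\Gamma$ at some point coincides with $\Gamma$, so an orbit starting transversally meets $\Gamma$ transversally every time. The intermediate-tangency part of your set $D$ is therefore empty.
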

\begin{proof}
    The set $\tau^{-1}(0,\infty)$ is open and has full Lebesgue measure in $T^1X\vert_\Gamma$. It has hence full $\lambda$-measure. Moreover, restricted on that set $\tau$ is analytic and nowhere locally constant. It follows that the point preimages $\tau^{-1}(t)$ have vanishing Lebesgue measure, and hence vanishing $\lambda$-measure. This means that the push-forward measure $(\tau\vert_U)_*\lambda$ is not atomic and hence that the cumulative distribution function $\phi^U_\lambda$ is continuous. For the same reason we have that 
    $$\lambda(\D((\tau\vert_U)^{-1}(a,b)))\le\lambda(\D U)+\lambda(\tau^{-1}(\{a,b\})=0\text{ for all }(a,b)\subset\BR_{>0}.$$
    Since the measures $\nu_n$ converge to $\lambda$ by Lemma \ref{lem hit distribution notes}, we get from the Portmanteau theorem that
    $$\lim_n\nu_n((\tau\vert_U)^{-1}(a,b))=\lambda((\tau\vert_U)^{-1}(a,b))$$
    In terms of the cumulative distribution functions we can rewrite this as
    $$\lim_n(\phi^U_n(b)-\phi^U_n(a))=\phi^U_\lambda(b)-\phi^U_\lambda(a)$$
    Since $\phi^U_\lambda$ is continuous, since there is some $\epsilon>0$ with $\phi^U_\lambda(t)=\phi^U_n(t)=0$ for all $t\in[0,\epsilon)$ and for all $n$, and since the functions $\phi^U_\lambda$ and $\phi^U_n$ are all monotonic, it follows that $\phi^U_n$ converges to $\phi^U_\lambda$ uniformly on compacta, as claimed.
\end{proof}

To every $v\in T^1X\vert_\Gamma$ with $0<\tau(v)<\infty$ we can associate the $k$-step geodesic arc
$$I_v:[0,1]\to X,\ t\mapsto\rho_v(t\cdot\tau(v))$$
with end points in $\Gamma$. Now, if $w\in T^1X\vert_\Gamma$ is sufficiently close to $v$, then not only do we have $\tau(w)<\infty$ but also that $I_w$ is homotopic to $I_v$. In other words, the homotopy class of the $k$-step arc $I_v$ is locally constant on the open set $\tau^{-1}(0,\infty)$. Moreover, since every arc $I_v$ can be homotoped to an orthogeodesic arc, and since this arc is unique in its homotopy class, we get that indeed the connected components of $\tau^{-1}(0,\infty)$ are in one-to-one correspondence to the homotopy classes of $k$-step arcs in $X\setminus\Gamma$--note that our arcs are oriented.

We need some notation. If $I$ is a $k$-step orthogeodesic arc in $(X,\Gamma)$ then we denote by  
$$A_I=\{v\in T^1X\vert_\Gamma\ \vert\ 0<\tau(v)<\infty\text{ and }I_v\text{ homotopic to }I\}$$
be the connected component of $\tau^{-1}(0,\infty)$ associated to $I$.

\begin{lem}\label{lem cumulative distribution function of arc}
    Let $I$ be a $k$-step orthogeodesic arc in $(X,\Gamma)$. 
    The set $A_I$ is open and satisfies $\lambda(\D A_I)<\infty$. Moreover, the cumulative distribution function $\phi_\lambda^{A_I}$ depends only on the lengths $\ell_X(\Gamma)$ and $\ell_X(I)$ of $\Gamma$ and $I$.
\end{lem}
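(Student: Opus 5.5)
Openness of $A_I$ is immediate: the homotopy class of $I_v$ is locally constant on the open set $\tau^{-1}(0,\infty)$. For $\lambda(\D A_I)$ I would note that $\lambda$ is a finite measure (indeed a probability measure), so the stated bound $\lambda(\D A_I)<\infty$ is trivial. What is actually needed later, to invoke Lemma \ref{lem cumulative distribution functions converge}, is $\lambda(\D A_I)=0$, and I would prove that too. The boundary of $A_I$ in $T^1X\vert_\Gamma$ consists of vectors $w$ such that either $w$ is tangent to $\Gamma$, or the geodesic $\rho_w$ becomes tangent to $\Gamma$ before the $(k+1)$-st hit, or it fails to reach $\Gamma$ in the required number of steps, i.e. $\tau(w)=\infty$. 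All of these sets are contained in countable unions of analytic curves in the cylinder $T^1X\vert_\Gamma\simeq\Gamma\times\BS^1$, together with the closed null set $\tau^{-1}(\infty)$. Hence they have zero Lebesgue measure, and so zero $\lambda$-measure.

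The main point is the computation of $\phi_\lambda^{A_I}$. I would lift to $\BH^2$. Fix a lift $\tilde I$ of the orthogeodesic $I$, with endpoints on lifts $\tilde\gamma_0$ and $\tilde\gamma_{k+1}$ of components of $\Gamma$. A vector $w\in A_I$ based at $x\in\gamma_0$ lifts to a vector based on $\tilde\gamma_0$. By minimality of intersections and the homotopy class condition, the geodesic it determines goes directly from $\tilde\gamma_0$ to $\tilde\gamma_{k+1}$, crossing exactly the $k$ intermediate lifts crossed by $\tilde I$. Every geodesic from $\tilde\gamma_0$ to $\tilde\gamma_{k+1}$ crosses those separating lifts anyway, so the intermediate curves play no role. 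Since $x$ ranges over all of $\gamma_0$, i.e. over a fundamental domain of length $\ell_X(\gamma_0)$ for the stabilizer of $\tilde\gamma_0$, and since the translates of $\tilde\gamma_{k+1}$ under that stabilizer correspond to distinct homotopy classes, $A_I$ is identified with the set $\{(x,\theta)\in\tilde\gamma_0\times(0,\pi)\}$ of vectors whose geodesics hit $\tilde\gamma_{k+1}$. Here $x$ now ranges over the whole line $\tilde\gamma_0$, and the $\lambda$-measure is $\frac{1}{4\ell_X(\Gamma)}\vert\sin\theta\vert\,d\theta\,dx$. The random variable $\tau$ is the hitting distance to $\tilde\gamma_{k+1}$.

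The configuration of two ultraparallel geodesics in $\BH^2$ is determined up to isometry by their distance $\ell_X(I)$. Therefore
$$\phi_\lambda^{A_I}(T)=\frac{1}{4\ell_X(\Gamma)}\int\!\!\int_{\{(x,\theta):\ \text{hit within }T\}}\vert\sin\theta\vert\,d\theta\,dx,$$
and the integral is a function $F(\ell_X(I),T)$ computed in the model configuration. So $\phi^{A_I}_\lambda$ depends only on $\ell_X(\Gamma)$ and $\ell_X(I)$. I would still check that the integral is finite: the total measure is finite by Santal\'o/Crofton, being $\frac12$ of the suitable Liouville measure of geodesics joining the two lines. I do not need an explicit formula.

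The main obstacle is the bijection step. One must verify that distinct $w\in A_I$ correspond to distinct points of the lifted set, and conversely that each geodesic from $\tilde\gamma_0$ to $\tilde\gamma_{k+1}$ projects to a vector with exactly $k+1$ hits in the right class and no tangencies. I would argue this by convexity: such a geodesic segment and $\tilde I$ cross the same lifts of $\Gamma$, which are exactly those separating $\tilde\gamma_0$ from $\tilde\gamma_{k+1}$, because disjoint geodesics in $\BH^2$ are crossed at most once. This gives the homotopy between the two arcs.
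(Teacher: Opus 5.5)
Your proposal is correct and follows the paper's route. You lift $I$ to $\BH^2$ and identify $A_I$, measure-preservingly, with the vectors over the whole lift $\tilde\gamma_0$ whose geodesics reach the other lift. You then conclude because two ultraparallel geodesics are determined up to isometry by their distance $\ell_X(I)$, while the normalisation of $\lambda$ involves only $\ell_X(\Gamma)$. You also rightly note that $\lambda(\D A_I)<\infty$ is vacuous, and your argument for the intended $\lambda(\D A_I)=0$ (via $\D A_I\subset\{\text{tangent vectors}\}\cup\tau^{-1}(\infty)$) is something the paper's proof omits.

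One slip needs fixing. The translates $g^n\tilde\gamma_{k+1}$ under the stabilizer $\langle g\rangle$ of $\tilde\gamma_0$ all correspond to the same homotopy class $I$, not distinct ones; that is exactly why a fundamental domain on $\gamma_0$ unfolds to the whole line. What you need is that these translates are pairwise distinct lifts, which holds because $\tilde\gamma_0$ is the unique axis of $g^n$ for $n\neq 0$; together with your separation count, this makes the unfolding a bijection.
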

\begin{proof}
    Lift $I:[0,1]\to X$ to a map $\tilde I:[0,1]\to\widetilde X$ to the universal cover. Up to isometry we might identify $\tilde X$ with the upper half place $\BH^2$ in such a way that $\tilde I(t)=i\cdot e^{\ell_X(I)\cdot t}$. When we do that, the unit half-circle $\tilde\gamma_0$ centered at $0$ is a lift of the component $\gamma_0$ of $\Gamma$ with $I(0)\in\gamma_0$ and the half-circle $\tilde\gamma_1$, again centered at $0$ and this time of radius $e^{\ell_X(I)}$, is a lift of the component $\gamma_1$ of $\Gamma$ with $I(1)\in\gamma_1$--possibly $\gamma_0=\gamma_1$, but that plays no role. 

    Now, for every $v\in A_I$ there is a proper homotopy between $I$ and the geodesic arc $I_v$. We can lift this homotopy uniquely to a homotopy between $\tilde I$ and a lift $\tilde I_v$ of $I_v$. The vector $\frac 1{\tau(v)}\frac d{dt}\tilde I_v(0)$ is a vector $\tilde v$ based at $\tilde\gamma_0$ with the property that when we flow it for time $\tau(v)$ under the geodesic flow we get a vector $\rho^{\BH^2}_{\tilde v}(\tau(v))$ over $\tilde\gamma_1$. Conversely, if we take a vector $\tilde w$ over $\tilde\gamma_0$ such that when we flow it for time $t$ we get a vector $\rho^{\BH^2}_{\tilde w}(t)$ over $\tilde\gamma_1$, when we we project $\tilde w$ down to $X$ using the covering map $\BH^2\simeq\widetilde X\to X$ then we get a vector $w\in A_I$ with $\tau(w)=t$. 

    It follows that we have an identification between $A_I$ and the set
    $$\tilde A_{\tilde I}=\{w\in T^1\BH^2\vert_{\tilde\gamma_0}\ \vert\text{ there is }t\in\BR_{>0}\text{ with }\rho^{\BH^2}_{w}(t)\in  T^1\BH^2\vert_{\tilde\gamma_1}\}$$
    As we did earlier we can identify $T^1\BH^2\vert_{\tilde\gamma_0}$ with $\tilde\gamma_0\times\BS^1$ and when we do that we can consider the measure $\tilde\lambda$ on $T^1\BH^2\vert_{\tilde\gamma_0}$ given by
    $$\int_{T^1\BH^2\vert_{\tilde\gamma_0}}f\ d\tilde\lambda=\frac 1{4\cdot \ell_X(\Gamma)}\int_\gamma\int_{\BS^1}f(x,\theta)\cdot\vert\sin(\theta)\vert\ d\theta dx$$
    for every compactly supported function $f\in C^0(T^1\BH^2\vert_{\tilde\gamma_0})$. 

    By construction, the identification between $A_I$ and $\tilde A_{\tilde I}$ is measure preserving when we endow the domain with the measure $\lambda$ and the target with the measure $\tilde\lambda$. It follows that the cumulative distribution function $\phi_\lambda^{A_I}$ agrees with the cumulative distribution function of the random variable
    $$\psi_{\ell_X(I)}:(\tilde A_{\tilde I},\tilde\lambda)\to\BR_{\ge 0}$$
    sending $w$ to $t$ if and only if $\rho^{\BH^2}_{w}(t)\in  T^1\BH^2\vert_{\tilde\gamma_1}$. In a formula this means that 
    $$\phi_\lambda^{A_I}(T)=\tilde\lambda(\psi_{\ell_X(I)}^{-1}(0,T]).$$
    Here the right side depends only on $\ell_X(\Gamma)$ and $\ell_X(I)$, as we needed to argue.
\end{proof}

We are now ready to prove Proposition \ref{prop hear orthospectrum}.

\begin{proof}[Proof of Proposition \ref{prop hear orthospectrum}]
    We will be hearing out of the melody ${\bf m}_{(X,\Gamma)}(v)=(\kappa_i,t_i)_i$ the $k$-step orthospectrum $\CG_k(X,\Gamma)$ of $(X,\Gamma)$ in an ordered way, first the length of the shortest arc, then the length of the second shortest, and so on. However, before moving on note that, since we know $\chi(X)$, we get from Lemma \ref{lem hear length} that we can hear from ${\bf m}_{(X,\Gamma)}(v)$ the total length $\ell_X(\Gamma)$.

    The key observation is that, while we cannot directly hear the measures $\nu_n$, we can hear their push forward under the random variable $\tau$ from \eqref{eq random variable tau}:
    $$\tau_*(\nu_n)=\frac 1n\sum_{i=1}^n\delta_{t_{i+k}-t_{i}}.$$
    It follows thus from Lemma \ref{lem cumulative distribution functions converge} that we can recover from ${\bf m}_{(X,\Gamma)}(v)$ the cumulative distribution function 
    $$\phi_\lambda(T)=\lim_{n\to\infty}\phi_{\nu_n}(T)=\lim_{n\to\infty}\frac 1n\vert\{i\le n\text{ with }t_{i+k}-t_{i}\le T\}.$$
    For small values of $T$ we have $\phi_\lambda(T)=0$. Indeed, density of the orbit of $v$ implies that
    $$T_1=\inf\{T\ \vert\ \phi_\lambda(T)>0\}$$
    is the length of the shortest possible essential $k$-step arc in $(X,\Gamma)$. We have found the smallest element in the orthospectrum: $T_1\in\CG_k(X,\Gamma)$.

    Suppose now by induction that we have found $T_1\le\dots\le T_r\in\CG_k(X\setminus\Gamma)$ such that there are distinct orthogeodesic oriented arcs $I_1,\dots,I_r$ in $(X,\Gamma)$ with $\ell_X(I_i)=T_i$ and that $T\ge T_r$ for all $T\in\CG_k(X\setminus\Gamma)\setminus\{T_1,\dots,T_r\}$. We do not know what are the arcs $I_1,\dots,I_r$, but this does not matter because by Lemma \ref{lem cumulative distribution function of arc} the cumulative distribution function $\phi_\lambda^{A(I_i)}$ only depends on the lengths $\ell_X(\Gamma)$ and $T_i=\ell_X(I_i)$. It follows that the cumulative distribution function of $\tau$ restricted to 
    $$U_r=T^1X\vert_\Gamma\setminus(\cup_{i=1,\dots,r}A(I_i))$$
    is given by
    $$\phi_\lambda^{U_r}=\phi_\lambda-\sum_i\phi_\lambda^{A(I_i)}$$
    and, since we can hear $\phi_\lambda$ and the $\phi_\lambda^{A(I_i)}$'s (because they only depend on $\ell_X(\Gamma)$ which as we point out earlier we can hear, and on $T_i$, which by induction we can hear), we can also hear $\phi_\lambda^{U_r}$. 

    As above, for small $T>0$, we have 
    $$\phi_\lambda^{U_r}(T)=0,$$
    and the first unaccounted for arc has length
    $$T_{r+1}=\inf\{T\ \vert\ \phi_\lambda^{U_r}(T)>0\}.$$
    We just have heard the $r+1$-th element in the $k$-step orthospectrum $\CG_k(X\setminus\Gamma)$. This concludes the proof of Proposition \ref{prop hear orthospectrum}. 
\end{proof}

Before moving on, let us add a few observations that will come in handy later on. 

\subsection*{Single note marimbas}
We suppose that $(X,\Gamma)$ is a {\em single note hyperbolic marimba}, meaning that $\Gamma$ consists just of a single curve. In fact, we will focus on the case that $(X,\Gamma)$ is a {\em separating} single note hyperbolic marimba, meaning that the curve $\Gamma$ disconnects $X$. 

\begin{prop}\label{prop single note}
    Let ${\bf m}\in(\Gamma\times\BR)^\Gamma$ such that there are a separating single note hyperbolic marimba $(X,\Gamma)$ and a random vector $v\in\CR(X)$ with ${\bf m}={\bf m}_{(X,\Gamma)}(v)$. If we denote the two components of $X\setminus\Gamma$ by $Y$ and $Y'$, and we know $\chi(X)$, then we can hear out of ${\bf m}$ the following data:
    \begin{itemize}
        \item The area and orthospectrum of $Y$.
        \item The area and orthospectrum of $Y'$.
    \end{itemize}
\end{prop}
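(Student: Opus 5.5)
We need to separate the data of $Y$ and $Y'$ from the melody. Since $\Gamma$ is a single separating curve, each crossing switches sides: if the geodesic at time $t_i$ crosses from $Y$ into $Y'$, then the segment $[t_i,t_{i+1}]$ lies in $Y'$, the next one in $Y$, and so on. So the segments alternate between the two sides. The melody ${\bf m}=(\kappa_i,t_i)_i$ thus splits into two subsequences of gaps: the even gaps $t_{2j+1}-t_{2j}$ and the odd gaps $t_{2j+2}-t_{2j+1}$. One of these records the lengths of consecutive $1$-step arcs in $Y$, the other those in $Y'$. We do not know which is which, but the statement only asks for the two unordered pieces of data, so we simply call $Y$ the side containing the even segments.

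First I would recover the areas. By Lemma \ref{lem hear length}, knowing $\chi(X)$ we hear $\ell_X(\Gamma)$. Next, the average of the even gaps converges to the mean return time into $\Gamma$ from the $Y$ side. To see this, apply Santal\'o's formula \eqref{eq practical Santalo} to the metric completion of $Y$ with $f\equiv 1$. The vectors on $\Gamma$ pointing into $Y$ form one half $\Gamma\times(0,\pi)$ of $T^1X\vert_\Gamma$. Hence
$$\kinematic(T^1Y)=2\pi\,\area(Y)=\int_\Gamma\int_0^\pi\tau\,\vert\sin\theta\vert\,d\theta\,dx.$$
Lemma \ref{lem hit distribution notes} says the hitting vectors $\rho_v(t_i)$ equidistribute to $\lambda$. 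The vectors with even index are exactly those pointing into $Y$, and they carry $\lambda$-mass $1/2$. Therefore $\frac 1n\sum_{j\le n}(t_{2j+1}-t_{2j})$ converges to $2\int_{\{\theta\in(0,\pi)\}}\tau\,d\lambda=\frac{2\pi\area(Y)}{2\ell_X(\Gamma)}$. This gives $\area(Y)$, and likewise $\area(Y')$.

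Alternatively, without the mean computation: the ergodic theorem applied to the indicator of $T^1Y$ says the proportion of time spent in $Y$ is $\area(Y)/\area(X)$, and $\area(X)=2\pi\vert\chi(X)\vert$ is known. The time spent in $Y$ up to $t_{2n}$ is exactly the sum of the even gaps. This route is cleaner.

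The main obstacle is justifying the time-average claim. The return time $\tau$ is unbounded, so weak convergence of $\nu_n$ does not directly give convergence of the means. The indicator-function route through the ergodic theorem for $\chi_{T^1Y}$ avoids this issue. It works because $T^1Y$ is open with boundary $T^1X\vert_\Gamma$ of measure zero, so the Portmanteau-type statement from Section \ref{sec ergodic} applies to every $v\in\CR(X)$.

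For the orthospectra, I would run the proof of Proposition \ref{prop hear orthospectrum} with $k=1$, restricted to the set $U_Y=\Gamma\times(0,\pi)$ of vectors pointing into $Y$. This set is open with $\lambda(\D U_Y)=0$. The restricted empirical measures $\frac 1n\sum_{j\le n}\delta_{\rho_v(t_{2j})}$ converge to $2\lambda\vert_{U_Y}$, by Lemma \ref{lem hit distribution notes} together with the alternation. Their push-forwards under $\tau$ are $\frac 1n\sum_j\delta_{t_{2j+1}-t_{2j}}$, which we hear. By Lemma \ref{lem cumulative distribution functions converge} we therefore hear $\phi^{U_Y}_\lambda$. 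Every $1$-step arc starting into $Y$ is an orthogeodesic of $Y$, so $U_Y$ is, up to measure zero, the disjoint union of the sets $A_I$ with $I$ ranging over orthogeodesics of $Y$. Lemma \ref{lem cumulative distribution function of arc} says each $\phi^{A_I}_\lambda$ depends only on $\ell_X(\Gamma)$ and $\ell(I)$. The inductive peeling argument from the proof of Proposition \ref{prop hear orthospectrum} then recovers $\CG(Y)$ in increasing order, and the same argument with odd indices recovers $\CG(Y')$.
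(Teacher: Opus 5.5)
Your proposal is correct and follows essentially the same route as the paper. Both use the alternation of segments across the separating curve, and the indicator argument you prefer for $\chi_{T^1Y}$ is exactly how the paper recovers $\area(Y)$ from $\area(X)=2\pi\vert\chi(X)\vert$. For the orthospectrum, both restrict the cumulative distribution function of $\tau$ to the vectors pointing into $Y$ and then run the peeling argument of Proposition \ref{prop hear orthospectrum}. Your remark that the indicator route needs the Portmanteau statement from Section \ref{sec ergodic} (random vectors are defined only through continuous test functions) is a point the paper passes over; your Santal\'o mean-return-time computation gives the same value but, as you note, is not needed.
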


\begin{proof}
    Note that, since $\Gamma$ consists of a single component, the melody 
    $${\bf m}={\bf m}_{(X,\Gamma)}(v)=(\kappa_i^\Gamma(v),t_i^\Gamma(v))_{i\in\BZ}$$ 
    is uniquely determined by the sequence $(t_i)_i=(t_i^\Gamma(v))_{i\in\BZ}$. Now, up to renaming the components of $X\setminus\Gamma$, we might assume that $v\in T^1Y$. This implies that
    $$\rho_v^X[t_{2i},t_{2i+1}]\subset T^1Y'\text{ and }\rho_v^X[t_{2i+1},t_{2i+2}]\subset T^1Y$$
    for all $i\ge 0$. It thus follows from the ergodicity of the geodesic flow that
    \begin{align*}
    \area(Y)
    &=\frac 1{2\pi}\area(X)\cdot\int_{T^1X}\chi_{T^1Y}\ d\Liouville\\
    &=\frac 1{2\pi}\area(X)\cdot\lim_{k\to\infty}\frac 1{t_{2k+2}}\int_0^{2t_k+2}\chi_{T^1Y}\big(\rho_v(t)\big)\ dt\\
    &=\frac 1{2\pi}\area(X)\cdot\lim_{k\to\infty}\frac 1{t_{2k+2}}\sum_{i=1}^k(t_{2i+2}-t_{2i+1}),
    \end{align*}
    where $\chi_{T^1Y}$ is the characteristic function of $T^1Y\subset T^1X$. Since we are assuming we know  $\chi(X)$, we know $\area(X)$. We have proved that ${\bf m}_{(X,\Gamma)}(v)$ determines $\area(Y)$ and hence also $\area(Y')=\area(X)-\area(Y)$.

    Now let's see how ${\bf m}_{(X,\Gamma)}(v)$ determines the orthospectrum $\CG(Y)$ of $Y$. Lacking better notation, let $T^1X\vert_\Gamma^Y$ and $T^1X\vert_\Gamma^{Y'}$ be the sets of unit tangent vectors based at $\Gamma$ which point into $Y$ and $Y'$ respectively. Both sets $T^1X\vert_\Gamma^Y$ and $T^1X\vert_\Gamma^{Y'}$ are open and their union has full $\lambda$ measure. We thus get from Lemma \ref{lem cumulative distribution functions converge} that 
    $$\lim_{n\to\infty}\phi^{T^1X\vert_\Gamma^Y}_n(t)=\phi^{T^1X\vert_\Gamma^Y}_\lambda(t)$$
    uniformly on compacta in $\BR_{\ge 0}$ where $\phi^{T^1X\vert_\Gamma^Y}_n$ and $\phi^{T^1X\vert_\Gamma^Y}_\lambda$ are the cumulative distribution functions of the random variables $\tau:(T^1X\vert_\Gamma^Y,\nu_n\vert_{T^1X\vert_\Gamma^Y})\to\BR$ and $\tau:(T^1X\vert_\Gamma^Y,\lambda\vert_{T^1X\vert_\Gamma^Y})\to\BR$, where 
    \begin{equation}
    \begin{split}
    \tau&:T^1X\vert_\Gamma\to\BR_{\ge 0},\\
    \tau&:v\mapsto\inf\left\{t>0\ \middle\vert\ \rho_v^X(t)\in T^1X\vert_\Gamma\right\}.
    \end{split}
    \end{equation}
    Noting that 
    $$\phi^{T^1X\vert_\Gamma^Y}_{2n}(T)=\frac 1{2n}\vert\{i\le n\text{ with }t_{2i}-t_{2i-1}\le T\},$$
    we get that ${\bf m}_{(X,\Gamma)}(v)$ determines the cumulative distribution function $\phi^{T^1X\vert_\Gamma^Y}_\lambda$. We can now run the exact same argument as in the proof of Proposition \ref{prop hear orthospectrum}, getting that ${\bf m}_{(X,\Gamma)}(v)$, together with the assumption that we know $\chi(X)$, determines the orthospectrum of $Y$. The same argument applies to $Y'$.
\end{proof}

\section{Proof of Theorem \ref{sat finitely many cousins}}\label{sec finitely many cousins}
In this section we prove Theorem \ref{sat finitely many cousins}, that we restate here for the convenience of the reader:

\begin{named}{Theorem \ref{sat finitely many cousins}}
    For any orientable hyperbolic marimba $(X_0,\Gamma_0)$ there are finitely many hyperbolic marimbas $(X_1,\Gamma_1),\dots,(X_k,\Gamma_k)$ such that any other orientable hyperbolic marimba $(X',\Gamma')$ with $\chi(X')=\chi(X)$ which is isomelodic to $(X_0,\Gamma_0)$ is isometric to one of the $(X_i,\Gamma_i)$'s.  
\end{named}

Let $Y_0$ be the metric completion of $X_0\setminus\Gamma_0$. The reason why we stated as Corollary \ref{kor hear orthospectrum} the fact that the melody ${\bf m}_{(X_0,\Gamma_0)}$ of a random vector $v\in\CR(X_0)$ determines the orthospectrum $\CG(Y_0)$ is that recently Le Quellec has proved in \cite[Theorem 3.1]{Nolwenn} that {\em the orthospectrum of a compact orientable hyperbolic surface $Y$ with geodesic boundary determines the metric on $Y$ up to finite indeterminacy}. Hence we get:

\begin{kor}\label{kor nolwenn}
    For any orientable hyperbolic marimba $(X_0,\Gamma_0)$ there are finitely many hyperbolic surfaces with boundary $Y_1,\dots,Y_r$ such that for any other orientable hyperbolic marimba $(X',\Gamma')$ with $\chi(X')=\chi(X)$ which is isomelodic to $(X_0,\Gamma_0)$ we have that $X'\setminus\Gamma'$ is isometric to one of the $Y_i$'s.\qed
\end{kor}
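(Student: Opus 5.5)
The plan is to combine Corollary \ref{kor hear orthospectrum} with Le Quellec's finiteness theorem \cite[Theorem 3.1]{Nolwenn}. The conclusion concerns only the cut-open surfaces $X'\setminus\Gamma'$, so the gluing twists along $\Gamma'$ play no role. Fix $(X_0,\Gamma_0)$ and let $Y_0$ be the metric completion of $X_0\setminus\Gamma_0$. Take any orientable hyperbolic marimba $(X',\Gamma')$ with $\chi(X')=\chi(X_0)$ that is isomelodic to $(X_0,\Gamma_0)$, and let $Y'$ be the metric completion of $X'\setminus\Gamma'$.

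\emph{Step 1.} I would show that $\CG(Y')=\CG(Y_0)$. By Proposition \ref{prop fequencies tell it all}, isomelodic means $\FM_{(X_0,\Gamma_0)}=\FM_{(X',\Gamma')}$. The pushforwards of the full-measure sets $\CR(X_0)$ and $\CR(X')$ under the melody maps then have full measure for this common measure, so they intersect. Hence there are $v\in\CR(X_0)$ and $v'\in\CR(X')$ with ${\bf m}_{(X_0,\Gamma_0)}(v)={\bf m}_{(X',\Gamma')}(v')$. Alternatively, one can use the map $\Phi$ from Theorem \ref{sat measure theory} together with absolute continuity, exactly as in the first half of its proof. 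Since $\chi(X')=\chi(X_0)$ is known, Corollary \ref{kor hear orthospectrum} says that this single melody determines both $\CG(Y_0)$ and $\CG(Y')$, so they coincide.

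\emph{Step 2.} Each $Y'$ is a compact orientable hyperbolic surface with totally geodesic boundary. Its topological type ranges over a finite list: the Euler characteristic of $Y'$ equals $\chi(X')=\chi(X_0)$, and its number of boundary components is $2|\Gamma'|=2|\Gamma_0|$, the note sets being the same. Le Quellec's theorem states that the orthospectrum determines the metric up to finite indeterminacy. Applied to the common orthospectrum $\CG(Y_0)$, it gives finitely many surfaces $Y_1,\dots,Y_r$ such that every $Y'$ is isometric to one of them.

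The main point to check is the hypothesis of Le Quellec's theorem. Her theorem may be stated for a fixed topological type, or the components of $Y'$ may be disconnected. In the disconnected case I would either apply her result componentwise, or note that the orthospectrum of a disjoint union is the union of the orthospectra and that only finitely many splittings are possible. Since there are finitely many topological types, taking the union of the finite lists still gives a finite list, which proves Corollary \ref{kor nolwenn}.
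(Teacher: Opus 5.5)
Your argument follows the paper's route. There, Corollary \ref{kor nolwenn} is obtained by feeding Corollary \ref{kor hear orthospectrum} into Le Quellec's finiteness theorem \cite[Theorem 3.1]{Nolwenn}. Your Step 1 correctly spells out what the paper leaves implicit: you use $\FM_{(X_0,\Gamma_0)}=\FM_{(X',\Gamma')}$ to find one melody produced by random vectors in both marimbas, so that $\CG(Y_0)=\CG(Y')$. Your handling of the finitely many topological types is also fine.

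Your fix for disconnected $Y'$ does not work as written. From $\CG(Y')=\CG(Y'_1)\sqcup\CG(Y'_2)$ you cannot read off $\CG(Y'_1)$ and $\CG(Y'_2)$. An infinite multiset has infinitely many splittings. Knowing that there are only finitely many topological splittings of $Y'$ does not tell you how the multiset divides. So a finiteness theorem for connected surfaces cannot simply be applied componentwise.

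To close this you have two options:
\begin{itemize}
\item check that Le Quellec's theorem, or its proof, already covers finite disjoint unions; or
\item hear the orthospectra of the individual components directly from the melody. The paper does this for a single separating curve in Proposition \ref{prop single note}. For a general multicurve you can run over the finitely many combinatorial configurations of which components lie on which side of which curve.
\end{itemize}
The paper itself cites her theorem for $X\setminus\Gamma$ without addressing this point. So your proposal is no less complete than the paper's proof, but the extra justification you offer for this case should not be relied on.
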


In light of Corollary \ref{kor nolwenn}, what is left to do is to prove that for any $Y\in\{Y_1,\dots,Y_r\}$ there are finitely many ways to isometrically identify boundary components, reversing orientation, to get a hyperbolic marimba $(X',\Gamma')$ which is isomelodic of $(X_0,\Gamma_0)$. 

Suppose that the boundary components of $Y$ can be isometrically paired (in an orientation reversing way) to get a hyperbolic marimba which is isomelodic of $(X_0,\Gamma_0)$. Any time we do so, we are associating a component of $\Gamma=(\gamma_1,\dots,\gamma_k)$ to every two curves which are paired, and vice versa. Since there are only finitely many ways to do this, we might assume that the boundary components of $Y$ have been labeled,
$$\D Y=\D_{\gamma_1}^-Y\cup\D_{\gamma_1}^+Y\cup\dots\cup\D_{\gamma_k}^-Y\cup\D_{\gamma_k}^+Y,$$
that the isometric gluings are such that $\D_{\gamma_i}^-Y$ is identified with $\D_{\gamma_i}^+Y$, and that the labeling has been chosen in such a way that the curve obtained by gluing those two is labeled as $\gamma_i\in\Gamma_0$. Now for any collection $\sigma=(\sigma_1,\dots,\sigma_k)$ of orientation reversing isometries
$$\sigma_i:\D_{\gamma_i}^-Y\to\D_{\gamma_i}^+Y$$
we obtain the hyperbolic marimba $(X_\sigma,\Gamma_\sigma)$ where $X_\sigma$ is the surface obtained from $Y$ by identifying $x\in\D_{\gamma_i}^-Y$ with $\sigma_i(x)\in\D_{\gamma_i}^+Y$ for $i=1,\dots,k$, and where $\Gamma_\sigma$ is the image of $\D Y$ under the map $Y\to X_\sigma$, labeled via the bijection 
$$\Gamma_0\sim\{\gamma_1^+,\dots,\gamma_k^+\}\sim\Gamma_\sigma.$$

To conclude the proof of Theorem \ref{sat finitely many cousins}, it suffices to prove that for any fixed $Y$ there are finite many choices for $\sigma=(\sigma_1,\dots,\sigma_k)$ such that the marimbas $(X_0,\Gamma_0)$ and $(X_\sigma,\Gamma_\sigma)$ are isomelodic. Note that to do that it suffices to show that there are finite sets of points 
$$F_1^-\subset\D_{\gamma_1}^-Y,F_1^+\subset\D_{\gamma_1}^+Y,\dots,F_k^-\subset\D_{\gamma_k}^-Y,F_k^+\subset\D_{\gamma_k}^+Y$$
such that if $(X_0,\Gamma_0)$ and $(X_\sigma,\Gamma_\sigma)$ are isomelodic, then 
$$\sigma_i(F_i^-)\cap F_i^+\neq\emptyset\text{ for all }i=1,\dots,k.$$

Since the argument is the same for all $i$, let us focus for the sake of concreteness on $i=1$, setting $\gamma=\gamma_i$. Let us start by picking $\beta,\delta\in\Gamma_0$ such that the three, non necessarily different, notes $\beta,\gamma,\delta$ are played in that order in the melody ${\bf m}_{(X_0,\Gamma_0)}(v)=(\kappa_i^{\Gamma_0},t_i^{\Gamma_0})_i$ of random vectors in $X$. More specifically there is some $i$ with
\begin{equation}\label{eq chord at time i}
    \kappa_i^{\Gamma_0}=\beta,\ \kappa_{i+1}^{\Gamma_0}=\gamma,\text{ and }\kappa_{i+2}^{\Gamma_0}=\delta.
\end{equation}
Whenever $i$ is such that \eqref{eq chord at time i} holds, we will say that {\em the chord $(\beta,\gamma,\delta)$ is being played at the $i$-th place}, or simply {\em at $i$}.

Consider now the quantity
$$L=\inf\{t_{i+2}^{\Gamma_0}-t_{i}^{\Gamma_0},\text{ the chord }(\beta,\gamma,\delta)\text{ is being played at }i\}.$$
Since the orbit of $v$ under the geodesic flow is dense in $T^1X$, we get that $L$ is the length of a shortest geodesic segment which 
\begin{itemize}
    \item starts at $\beta\in\Gamma_0$,
    \item ends in $\delta\in\Gamma_0$, and
    \item whose interior only meets $\Gamma_0$ at $\gamma$. 
\end{itemize}  
Now, there can be several such geodesic segments $I_1,\dots,I_r$, each one of them consisting of two geodesic segments $I_j=J_i^-\cup J_j^+$ in $X_0\setminus\Gamma_0$, the first going from $\beta$ to $\gamma$ and the second from $\gamma$ to $\delta$. For economy of notation, set $I=I_1$ and $J^\pm=J_1^\pm$.

Now, since we are assuming that $(X_\sigma,\Gamma_\sigma)$ is isomelodic of $(X_0,\Gamma_0)$, we get that in $(X_\sigma,\Gamma_\sigma)$ there are some, at least one, geodesic arcs $\hat I_1(\sigma),\dots,\hat I_s(\sigma)$ with the following properties:
\begin{itemize}
    \item $\hat I_n(\sigma)$ starts at $\beta\in\Gamma'$, ends in $\delta\in\Gamma'$, and its interior only meets $\Gamma'$ at $\gamma$.
    \item $\hat I_n(\sigma)=\hat J_n^-(\sigma)\cup\hat J_n^+(\sigma)$ where $\hat J_n^\pm(\sigma)$ are geodesic segments in $X_\sigma\setminus\Gamma_\sigma$, the first going from $\beta$ to $\gamma$ and the second from $\gamma$ to $\delta$.
    \item $\ell_{X_\sigma}(\hat J_n^\pm(\sigma))=\ell_X(J^\pm)$.
\end{itemize}
By construction, we can identify $X_\sigma\setminus\Gamma_\sigma$ and $Y$, and hence we can think of the segments $\hat J_n^\pm(\sigma)$ in there. When we do that, they have the following properties:
\begin{enumerate}
    \item $\hat J_n^-(\sigma)$ starts at $\D_\beta^- Y\cup\D_\beta^+Y$ and ends in $\D_\gamma^- Y\cup\D_\gamma^+Y$. 
    \item $\hat J_n^+(\sigma)$ starts at $\D_\gamma^- Y\cup\D_\gamma^+Y$ and ends in $\D_\delta^- Y\cup\D_\delta^+Y$.
    \item $d\sigma_\gamma$ (or $d\sigma_\gamma^{-1}$) maps the terminal vector of $\hat J_n^-(\sigma)$ to the initial vector of $J_n^+(\sigma)$.
    \item $\ell_{Y}(\hat J_n^\pm(\sigma))=\ell_{X_\sigma}(\hat J_n^\pm(\sigma))=\ell_X(J^\pm)$.
\end{enumerate}
The third property is what ensures that $\hat I_n(\sigma)=\hat J_n^-(\sigma)\cup\hat J_n^+(\sigma)$ is a smooth geodesic segment in $X_\sigma$. 

Now, the segments $\hat J_n^\pm(\sigma)$ are properly homotopic in $Y$ to geodesic segments $\tilde J_n^\pm(\sigma)$ orthogonal to the boundary. Noting that
$$\ell_{Y}(\tilde J_n^\pm(\sigma))\le \ell_{Y}(\hat J_n^\pm(\sigma))=\ell_{X_0}(J^\pm)$$
we get that, over all choices of $\sigma$ and $n$, the segments $\tilde J_n^\pm(\sigma)$ belong to a finite set of segments in $Y$.

It follows that to prove that there are finitely many choices for $\sigma_\gamma$ it suffices to prove the following claim:

\begin{named}{Claim}
    For any two geodesic segments $\tilde J^-,\tilde J^+$ in $Y$, orthogonal to $\D Y$, and with 
\begin{enumerate}
    \item $\tilde J^-$ starting at $\D_\beta^- Y\cup\D_\beta^+Y$ and ending in $\D_\gamma^- Y$, and 
    \item $\tilde J^+$ starting at $\D_\gamma^+Y$ and ending in $\D_\delta^- Y\cup\D_\delta^+Y$,
\end{enumerate}
    there are finitely many isometries $\sigma_\gamma:\D_\gamma^- Y\to \D_\gamma^+ Y$ such that there are geodesic segments $\hat J^-,\hat J^+$ in $Y$, with
    \begin{itemize}
        \item $\hat J^\pm$ is properly homotopic to $\tilde J^\pm$, 
        \item $\hat J^-$ is orthogonal to the boundary at its initial point and $\hat J^+$ at its final point,
        \item $\ell_Y(\hat J^\pm)=\ell_X(J^+)$, and
        \item $\sigma_\gamma$ maps the terminal point of $\hat J_n^-$ to the initial point of $\hat J_n^+$
    \end{itemize}
\end{named}
\begin{proof}
    Since $\hat J^{\pm}(\sigma_\gamma)$ is orthogonal to the boundary at its initial point, is proper homotopic to $\tilde J^{\pm}$, and has length $\ell_Y(\hat J^\pm(\sigma_\gamma))=\ell_X(J^+)$, we have at most two possible choices for $\hat J^-(\sigma_\gamma)$ and another two for $\hat J^+(\sigma_\gamma)$. Let $\CJ^-\subset\D_\gamma^-Y$ (resp. $\CJ^+\subset\D_\gamma^+Y$) be the endpoints of the (at most two) possible choices for $\hat J^-(\sigma_\gamma)$ (res. $\hat J^+(\sigma_\gamma)$). The map $\sigma_\gamma$ has to map some point in $\CJ^-$ to some point in $\CJ^+$. Since $\vert\CJ^-\vert,\vert\CJ^+\vert\le 2$, and since there is only an orientation reversing isometry of $\BS^1$ fixing any given point, we get that there are at most $4$ choices for $\sigma_\gamma$. 
\end{proof}

Having proved the claim, we have also proved Theorem \ref{sat finitely many cousins}.\qed

\section{Generic uniqueness}
In this section we prove Theorem \ref{sat generically unique}, or rather we reduce the proof to that of a proposition we will discuss in the next section.

\begin{named}{Theorem \ref{sat generically unique}}
    If the underlying hyperbolic surface of an orientable hyperbolic marimba $(X_0,\Gamma_0)$ is generic, then any other orientable marimba $(X',\Gamma')$ with $\chi(X')=\chi(X_0)$ which is isomelodic to $(X_0,\Gamma_0)$  is isometric to $(X_0,\Gamma_0)$.
\end{named}

As we already mentioned in the introduction, here "generic" means that $X_0$ belongs to a full measure subset of moduli space, or equivalently Teichm\"uller space. 
\medskip

Starting with the proof of Theorem \ref{sat generically unique}, we consider first that case that $(X_0,\Gamma_0)$ is a single note marimba, meaning that that $\Gamma$ consists of a single component. Our first goal is to recognize if $\Gamma$ is separating or not. 

\begin{lem}\label{lem recognize separating}
    We can recognize from the melody ${\bf m}_{(X,\Gamma)}(v)$ of a random vector in a generic single note marimba $(X,\Gamma)$ whether the curve $\Gamma$ is separating or not. 
\end{lem}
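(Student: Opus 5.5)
The plan is to look at the parity of the index $i$ in the melody ${\bf m}_{(X,\Gamma)}(v)=(\kappa_i,t_i)_i$. Since $\Gamma$ has a single component, the melody is just the sequence $(t_i)_i$, and the gaps $t_{i+1}-t_i$ are the lengths of the successive arcs of $\rho^X_v$ in $X\setminus\Gamma$. I will separate the gaps into those with $i$ even and those with $i$ odd. For each parity class I consider the empirical distribution
$$\frac 1n\sum_{i\le n}\delta_{t_{2i+1}-t_{2i}}\quad\text{and}\quad\frac 1n\sum_{i\le n}\delta_{t_{2i}-t_{2i-1}}.$$
I also consider the fraction of time $\lim_k\frac 1{t_{2k}}\sum_{i\le k}(t_{2i}-t_{2i-1})$ spent in odd gaps. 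The claim to prove is that these two empirical distributions have the same limit if $\Gamma$ is non-separating, and generically different limits if $\Gamma$ is separating. Both limits exist and can be heard from the melody.

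\emph{Separating case.} Write $X\setminus\Gamma=Y\cup Y'$. The even and odd gaps then alternate between arcs in $Y$ and arcs in $Y'$, exactly as in the proof of Proposition \ref{prop single note}. By that proof, the time fractions are $\area(Y)/\area(X)$ and $\area(Y')/\area(X)$. The gap distributions converge, by Lemma \ref{lem cumulative distribution functions converge} applied to $U=T^1X\vert_\Gamma^Y$ and $U=T^1X\vert_\Gamma^{Y'}$ and normalized, to the return-time distributions of $\tau$ on vectors pointing into $Y$ and into $Y'$ respectively. By the inductive argument of Proposition \ref{prop hear orthospectrum}, these distributions determine the orthospectra $\CG(Y)$ and $\CG(Y')$. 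If $Y$ and $Y'$ have different genus, the areas already differ. If they have the same genus, I use genericity: the metrics on $Y$ and $Y'$ vary independently in $\CT_b(Y_0)$, and the length of the shortest orthogeodesic is a nonconstant analytic function on each factor (I would check nonconstancy by pinching $\Gamma$ on one side only). Hence off a proper analytic subset, of measure zero, the shortest orthogeodesics of $Y$ and $Y'$ have different lengths. Then the two parity distributions have different infima of support, so they differ.

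\emph{Non-separating case.} The class $\alpha\mapsto\iota(\alpha,\Gamma)\bmod 2$ is nontrivial in $H^1(X;\BZ/2\BZ)$, so it defines a connected double cover $\pi:\hat X\to X$. The preimage $\pi^{-1}(\Gamma)$ cuts $\hat X$ into two copies $Y_1,Y_2$ of the connected surface $Y=X\setminus\Gamma$, and these copies are exchanged by the deck involution $\iota$. A lift $\hat\rho$ of $\rho^X_v$ lies alternately in $Y_1$ and $Y_2$ on even and odd gaps. Since $\pi$ is a local isometry, $d\pi$ maps Liouville-null sets to null sets; hence for almost every $v$ the lifts are random vectors in $\CR(\hat X)$. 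The geodesic flow on the connected closed surface $\hat X$ is ergodic. By Lemma \ref{lem hit distribution notes} on $\hat X$, the hits equidistribute with respect to the $\vert\sin\theta\vert$-measure on $T^1\hat X\vert_{\pi^{-1}\Gamma}$, and this measure is $\iota$-invariant. Therefore the vectors pointing into $Y_1$ and those pointing into $Y_2$ have the same return-time distribution, so the even and odd gap distributions coincide, and the time fraction equals $\frac 12$.

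Combining the two cases gives the test: $\Gamma$ separates if and only if the even and odd gap distributions differ, which holds for generic $X$. I expect the main obstacle to be the genericity statement in the equal-genus separating case. One must make sure that the bad locus, where $Y$ and $Y'$ have identical gap distributions, is genuinely a proper analytic subset of Teichm\"uller space. The symmetric case $Y\cong Y'$ must indeed be excluded, and Example \ref{example separating and non-separating} shows that such non-generic coincidences really occur. My first approach is to compare only the minimal orthogeodesic lengths of $Y$ and $Y'$. If these could coincide identically, I would pass to the full orthospectra, using that the analytic length functions of different arcs are distinct.
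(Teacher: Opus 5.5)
Your proposal is correct and follows essentially the paper's argument: split the gaps $t_{i+1}-t_i$ by parity; in the separating case use genericity to guarantee that the bottoms of the orthospectra of the two sides differ (the paper simply assumes this for every separating curve, which also covers your different-genus case); and in the non-separating case lift to the double cover given by $\alpha\mapsto\iota(\alpha,\Gamma)\bmod 2$, whose two complementary pieces are isometric, which forces the even and odd statistics to agree. The differences are minor: the paper compares only the two infima $\inf_i(t_{2i+1}-t_{2i})$ and $\inf_i(t_{2i+2}-t_{2i+1})$ rather than full gap distributions, and it does not justify the genericity property; if you want to justify it, argue with the analytic functions $\ell_Y(a)-\ell_{Y'}(a')$ over pairs of arcs, since the minimal orthogeodesic length itself is not analytic.
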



The genericity property in Lemma \ref{lem recognize separating} is the following: 
\begin{quote}
    {\em For a generic surface $X$ the following holds: if a curve $\gamma$ separates $X$ into two components $Y$ and $Y'$, then the bottom of the orthospectrum of $Y$ and $Y'$ are different.}
\end{quote} 
Here, the bottom of the orthospectrum $\CG(Y)$ of $Y$ is just the minimum of $\CG(Y)$.

\begin{proof}
    Since $\Gamma$ consists of a single component, the melody ${\bf m}_{(X,\Gamma)}(v)=(\kappa_i^\Gamma(v),t_i^\Gamma(v))_{i\in\BZ}$ is uniquely determined by the sequence $(t_i)_i=(t_i^\Gamma(v))_{i\in\BZ}$. Note also that if $\Gamma$ separates $X$ into two components $Y$ and $Y'$ then, up possibly switching their names, we have
    $$\rho_v^X[t_{2i},t_{2i+1}]\subset T^1Y\text{ and }\rho_v^X[t_{2i+1},t_{2i+2}]\subset T^1Y'$$
    for all $i$. It follows that 
    \begin{align*}
    \inf\{t_{2i+1}-t_{2i},\ i\in\BZ\}&=\text{bottom of orthospectrum of }Y\\
    \inf\{t_{2i+2}-t_{2i+1},\ i\in\BZ\}&=\text{bottom of orthospectrum of }Y'
    \end{align*}
    Since $X$ is generic, we thus get that
    \begin{equation}\label{eq even odd separating}
    \Gamma\text{ separating}\Rightarrow\inf\{t_{2i+1}-t_{2i},\ i\in\BZ\}\neq \inf\{t_{2i+2}-t_{2i+1},\ i\in\BZ\}.
    \end{equation}

    Suppose now that $\Gamma$ is non-separating and consider the cover $\hat X$ of $X$ corresponding to the kernel of the homomorphism
    $$\pi_1(X)\to\BZ/2\BZ,\ \alpha\mapsto\iota(\alpha,\Gamma)\mod(2).$$
    The preimage $\hat\Gamma$ of $\gamma$ under $\pi:\hat X\to X$ is a multicurve consisting of two curves which separate $\hat X$ into two isometric copies $\hat Y,\hat Y'$ of $X\setminus\Gamma$. Associating to $\hat\Gamma$ two musical notes of your choosing--let's go for Fa and La--consider the hyperbolic marimba $(\hat X,\hat\Gamma)$. Let $\hat v\in T^1\hat X$ be a vector with $d\pi(\hat v)=v$ and consider its melody 
    $${\bf m}_{(\hat X,\hat\Gamma)}(\hat v)=(\kappa_i^{\hat\Gamma}(\hat v),t_i^{\hat\Gamma}(\hat v))_i$$
    The basic observations are that the sequences of times of $v$ and $\hat v$ are identical, that is
    \begin{equation}\label{eq identical times}
        (t_i^{\hat\Gamma}(\hat v))_i=(t_i)_i,
    \end{equation}
    and that up possibly switching the names of $\hat Y$ and $\hat Y'$, we have
    $$\rho_v^X[t_{2i}^\Gamma(\hat v),t^\Gamma_{2i+1}(\hat v)]\subset T^1\hat Y\text{ and }\rho_v^X[t^\Gamma_{2i+1}(\hat v),t^\Gamma_{2i+2}(\hat v)]\subset T^1\hat Y'$$
    for all $i$. This last property means that, as above, we have 
    \begin{align*}
    \inf\{t^\Gamma_{2i+1}(\hat v)-t^\Gamma_{2i}(\hat v),\ i\in\BZ\}&=\text{bottom of orthospectrum of }\hat Y\\
    \inf\{t^\Gamma_{2i+2}(\hat v)-t^\Gamma_{2i+1}(\hat v),\ i\in\BZ\}&=\text{bottom of orthospectrum of }\hat Y'
    \end{align*}
    Now, we have however that $\hat Y$ and $\hat Y'$ are isometric, meaning that 
    $$\inf\{t^\Gamma_{2i+1}(\hat v)-t^\Gamma_{2i}(\hat v),\ i\in\BZ\}=\inf\{t^\Gamma_{2i+2}(\hat v)-t^\Gamma_{2i+1}(\hat v),\ i\in\BZ\}.$$
    Combining this with \eqref{eq identical times}, we get
    \begin{equation}\label{eq even odd non-separating}
    \Gamma\text{ non-separating}\Rightarrow\inf\{t^\Gamma_{2i+1}-t^\Gamma_{2i},\ i\in\BZ\}= \inf\{t^\Gamma_{2i+2}-t^\Gamma_{2i+1},\ i\in\BZ\}.
    \end{equation}
    The claim of the lemma follows directly from \eqref{eq even odd separating} and \eqref{eq even odd non-separating}.
\end{proof}

Once we know that the melody ${\bf m}_{(X,\Gamma)}(v)$ of a random vector in a generic single note marimba $(X,\Gamma)$ determines whether $\Gamma$ separates or not, let us prove that ${\bf m}_{(X,\Gamma)}(v)$ actually determines $X\setminus\Gamma$ up to isometry.

\begin{lem}\label{lem recognize complement up to isometry}
    The melody ${\bf m}_{(X,\Gamma)}(v)$ of random vector in a generic orientable single note hyperbolic marimba $(X,\Gamma)$ determines $X\setminus\Gamma$ up to isometry.
\end{lem}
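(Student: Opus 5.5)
We split into the two cases distinguished by Lemma \ref{lem recognize separating}, which tells us from ${\bf m}_{(X,\Gamma)}(v)$ whether $\Gamma$ separates. Throughout we use, as in the statement of Theorem \ref{sat generically unique}, that $\chi(X)$ is known. Consequently Lemma \ref{lem hear length} gives us $\ell_X(\Gamma)$, and Corollary \ref{kor hear orthospectrum} gives us the orthospectrum $\CG(Y)$ of the metric completion $Y$ of $X\setminus\Gamma$. The plan is to feed this orthospectral data into the generic rigidity result of Le Quellec \cite{Nolwenn}, in the extended form discussed in the Appendix. That result says that for a generic compact orientable surface with geodesic boundary, the orthospectrum, possibly together with the area or boundary lengths, determines the surface up to isometry.

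\textbf{Non-separating case.} Here $Y$ is connected, of known topological type: genus $g-1$ with two boundary components of equal length $\ell_X(\Gamma)$. We need the point $Y\in\CT_b(Y_0)$ to be generic in the sense of Le Quellec's theorem. The key step is to check that the negligible exceptional set in $\CT(Y_0)$ meets the subspace $\CT_b(Y_0)$ in a negligible set. Then, through the Fenchel--Nielsen isomorphism $\Phi:\CT_b(Y_0)\times\BR\to\CT(X_0,\Gamma_0)$ of Proposition \ref{prop FN}, its preimage is negligible in Teichm\"uller space. If the exceptional set is not automatically transverse to $\CT_b(Y_0)$, we instead use the Appendix extension, presumably a version of Le Quellec's result within the family $\CT_b(Y_0)$. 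Its proof follows Wolpert's: analytic relations between orthogeodesic lengths hold either identically on the analytic family or only on a proper analytic subset. Granting this, $\CG(Y)$ determines $Y$ up to isometry for generic $X$, and we are done in this case.

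\textbf{Separating case.} Here $Y=Y'\sqcup Y''$. By Proposition \ref{prop single note} we hear the area and orthospectrum of each component separately. The area pins down the topological type of each piece, namely its genus, since each has one boundary component. For generic $X$ the pair $(Y',Y'')$ is generic in $\CT_b(Y'_0)\times\CT_b(Y''_0)$, where the only constraint is that the boundary lengths are equal. That constraint is harmless, because each factor separately ranges over a generic one-holed surface. Applying the Appendix version of Le Quellec's theorem to each component recovers $Y'$ and $Y''$ up to isometry, hence $X\setminus\Gamma$. There is one subtlety: we should compare with a second marimba $(X',\Gamma')$ that need not itself be generic. For this we use the formulation \`a la Wolpert: for generic $Y$, any $Y^*$ with the same orthospectrum and topology is isometric to $Y$. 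This formulation requires genericity only of the given surface, not of the competitor.

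\textbf{Main obstacle.} The hard step is transferring genericity, that is, ensuring that the full-measure set in Teichm\"uller space of $X$ projects to points of $\CT_b$ that are generic for the orthospectral rigidity statement, which is itself formulated on $\CT(Y_0)$. I would first try to show directly that the exceptional locus in Le Quellec's proof is a countable union of proper analytic subsets defined by length identities among orthogeodesics. Each such identity restricted to the connected analytic manifold $\CT_b(Y_0)$ either vanishes identically or cuts out a null set. To rule out identical vanishing, it then suffices to exhibit a single point of $\CT_b(Y_0)$ where the identity fails, for instance by pinching a suitable curve. This is where the Appendix extension should be invoked.
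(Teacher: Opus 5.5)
Your proposal follows the paper's proof. You split the argument using Lemma \ref{lem recognize separating}. In the non-separating case you combine Corollary \ref{kor hear orthospectrum} with the $\CT_b$-version of Le Quellec's rigidity from the Appendix. In the separating case you use Proposition \ref{prop single note}, Gauss--Bonnet and Le Quellec's theorem on each component, which is exactly what the paper packages as the genericity property ($\star$).

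Two corrections, neither of which breaks the argument since you ultimately defer to the Appendix.

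First, in the separating case each component has a single boundary curve. So Le Quellec's Theorem 4.1 applies directly, and the Appendix is only needed in the non-separating case.

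Second, your sketched route to the Appendix cannot work in general. You propose to show that the exceptional locus meets $\CT_b(Y_0)$ in a null set, ruling out identical vanishing by finding a point where each relation fails. For a two-holed torus, every $Y\in\CT_b(Y_0)$ has a symmetry swapping the boundary curves. So some length correspondences hold identically on $\CT_b(Y_0)$ but not on $\CT(Y_0)$, and $\CT_b(Y_0)$ lies entirely inside Le Quellec's exceptional set. This is why the Appendix instead proves, via Irmak--McCarthy, that any correspondence holding identically on $\CT_b(Y_0)$ is induced by a mapping class.
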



The genericity property in Lemma \ref{lem recognize complement up to isometry} is the following: 
\begin{quote}
    ($\star$)    {\em for a generic orientable surface $X$ the following holds: if $\Gamma\subset X$ is a simple closed geodesic then the topology of $Y=X\setminus\Gamma$ together with the orthospectrum $\CG(Y')$ of every connected component $Y'$ of $Y$, determine $Y$ up to isometry.}
\end{quote}
This is basically a consequence of yet another result of Le Quellec, namely Theorem 4.1 in \cite{Nolwenn}: {\em Generic surfaces in the Teichm\"uller space $\CT_{g,b}$ of connected, orientable genus $g$ surfaces with $b$ boundary components, are determined by their orthospectrum up to isometry.} However, as stated we cannot apply directly Le Quellec's theorem when $\Gamma$ is non-separating because $\CT_b(X\setminus\Gamma)$ is not a generic subset of $\CT(X\setminus\Gamma)$. However, the statement of her theorem still holds when we restrict to that subset. In the appendix we will explain how to adapt Le Quellec's argument. For now, let's us use ($\star$) to prove Lemma \ref{lem recognize complement up to isometry}:

\begin{proof}
    By Lemma \ref{lem recognize separating} we can recognize whether $\Gamma$ is separating or not. 
    Suppose first that $\Gamma$ is non-separating. We get from Corollary \ref{kor hear orthospectrum} that ${\bf m}_{(X,\Gamma)}(v)$ determines the orthospectrum of $X\setminus\Gamma$ and hence from ($\star$) that $X\setminus\Gamma$ is determined by ${\bf m}_{(X,\Gamma)}(v)$.

    If $\Gamma$ separates, let $Y_1,Y_2$ be the two components of $X\setminus\Gamma$. Up to switching the names we might assume that $v\in T^1Y_1$. We get thus from Proposition \ref{prop single note} that ${\bf m}_{(X,\Gamma)}(v)$ determines the areas $\area(Y_1)$ and $\area(Y_2)$ and orthospectra $\CG(Y_1)$ and $\CG(Y_2)$ of $Y_1$ and $Y_2$. Since $Y_1$ is orientable and has a single boundary component, we get from Gau\ss -Bonnet that its area determines its topology. The same argument applies to $Y_2$. Now, knowing the topology of the components of $X\setminus\Gamma$ and their orthospectra, we get from ($\star$) that ${\bf m}_{(X,\Gamma)}(v)$ determines $X\setminus\Gamma$ up to isometry.
\end{proof}

Still assuming that $(X_0,\Gamma_0)$ is a single note hyperbolic marimba, we know at this point that if $X\in\CT(X_0)$ is generic and $(X',\Gamma')$ is isomelodic of $(X_0,\Gamma_0)$ then $X'\setminus\Gamma'$ is isometric to 
$$Y_0=X_0\setminus\Gamma_0.$$ 
Since both $X_0$ and $X'$ are obtained from $Y_0$ by gluing back both boundary components via an orientation reversing isometry, it remains to prove that generically the gluing map is determined by the melody of a random vector in $(X_0,\Gamma_0)$. We remind the reader that $\CT_b(Y_0)$ is the Teichm\"uller space of hyperbolic metrics on $Y_0$ whose boundary consists of two geodesics of the same length and that, by Proposition \ref{prop FN}, there is an analytic isomorphism
$$\Phi:\CT_b(Y_0)\times\BR^{\Gamma_0}\to\CT(X_0,\Gamma_0).$$
In the next section we will prove the following:
    
\begin{prop}\label{prop wolpert}
    Let $(X_0,\Gamma_0)$ be an orientable single note marimba, set $Y_0=X_0\setminus\Gamma_0$, and let $Y\in\CT_b(Y_0)$ be generic. For all $\theta\in\BR$ the following holds: if $\theta'\in\BR$ is such that the marimbas $\Phi(Y,\theta)$ and $\Phi(Y,\theta')$ are isomelodic, then 
    \begin{itemize}
        \item $\theta'=\theta\mod(\ell_{\Phi(Y,\theta)}(\Gamma))$ if no component of $Y_0$ is a one-holed torus.
        \item $\theta'=\theta\mod(\frac 12\ell_{\Phi(Y,\theta)}(\Gamma))$ if some component of $Y_0$ is a one-holed torus.
    \end{itemize}
    Here $\Phi$ is as in Proposition \ref{prop FN}.
\end{prop}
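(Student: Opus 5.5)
Since $\Phi(Y,\theta)$ and $\Phi(Y,\theta')$ have the same underlying $Y$, the melody can only detect the twist through arcs that cross $\Gamma$. Proposition \ref{prop hear orthospectrum} (applied with $k=2$; $\chi$ is the same for both) shows that isomelodic marimbas have the same $2$-step orthospectrum $\CG_2$. By the computation in the proof of Lemma \ref{lem analytic function}, each $2$-step arc $\tau$ (obtained by concatenating proper arcs $\alpha,\beta$ of $Y$ through $\Gamma$) satisfies
$$\cosh(\ell_{(Y,\theta)}(\tau))=a_\tau(Y)\cosh(d_\tau(Y)+\theta)+b_\tau(Y).$$
Here $a_\tau,b_\tau$ depend only on $\ell_Y(\alpha),\ell_Y(\beta)$, and the offsets $d_\tau(Y)$ are analytic in $Y$. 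The plan is a Wolpert-type argument: for generic $Y$, identify individual $2$-step arcs through their lengths and recover $\theta$ modulo the symmetries listed in the Addendum to Proposition \ref{prop FN}.

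First I would isolate the shortest $2$-step lengths. Choose a pair $(\alpha,\beta)$ of proper orthogeodesic arcs of $Y$, say the ones of minimal length ending on $\D^-Y$ and starting on $\D^+Y$. The $2$-step arcs built from them form a $\BZ$-family $\tau_n$, obtained by adding $n$ full turns along $\Gamma$, with $d_{\tau_n}=d_{\tau_0}+n\ell(\Gamma)$. The minimal element of $\CG_2$ is then $\min_n \operatorname{arccosh}(a\cosh(d+\theta+n\ell)+b)$, minimized over all candidate pairs. For generic $Y$ the orthospectrum of $Y$ has no coincidences beyond those forced by symmetry, so $a,b$ are known and this minimum determines $|d+\theta \bmod \ell|$, up to the sign ambiguity of $\cosh$. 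Doing the same with a second pair $(\alpha',\beta')$, whose lengths are heard separately, gives $|d'+\theta \bmod \ell|$. Generically $d-d'\not\equiv 0,\ \pm 2\theta$-type coincidences can occur only on proper analytic subsets, so two or three such equations determine $\theta \bmod \ell$. This is essentially the algebraic structure used in Lemma \ref{lem analytic function}: the image of $\theta\mapsto(\cosh\ell(\tau_i))_i$ is a curve, and a generic point on it has a unique preimage mod $\ell$.

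The main obstacle is knowing which heard length belongs to which arc. $\CG_2$ is an unlabeled multiset, so a given length in $\CG_2(\Phi(Y,\theta'))$ might come from a different pair of arcs. I would handle this as Wolpert does. Consider the countably many analytic relations
$$\cosh\ell_{(Y,\theta)}(\tau)=\cosh\ell_{(Y,\theta')}(\tau')$$
for distinct combinatorial arcs $\tau\neq\tau'$, as analytic equations in $(Y,\theta,\theta')$, using Lemma \ref{lem analytic function} to express them through the analytic function $P$. Each such relation either holds identically, which forces $\tau'$ to be the image of $\tau$ under a Dehn twist power or, in the one-holed torus case, under the hyperelliptic involution (giving the half-twist), or it cuts out a proper analytic subset. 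Removing the countable union of the projections of these subsets to $\CT_b(Y_0)$ leaves a full measure set. This projection step is delicate, and it is exactly why Lemma \ref{lem analytic function} is needed. For $Y$ in that set, matching the lowest few elements of $\CG_2$ forces the identification $\tau\leftrightarrow\tau'$ up to these symmetries, and hence $\theta'\equiv\theta$ modulo $\ell$ or $\ell/2$.

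To check that the exceptional set is genuinely proper, I would exhibit for each nonidentical relation one explicit $(Y,\theta,\theta')$ where it fails, for instance with $\Gamma$ very short, where the asymptotics of $a_\tau$ and $d_\tau$ separate the arcs. That verification is where I expect most of the work.
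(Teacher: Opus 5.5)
There is a genuine gap in the step that is supposed to solve the matching problem. As you set it up, a single relation $\cosh\ell_{(Y,\theta)}(\tau)=\cosh\ell_{(Y,\theta')}(\tau')$ in the variables $(Y,\theta,\theta')$ can never hold identically, because the two sides depend on different twist parameters. So your dichotomy always lands in the \emph{proper analytic subset} branch. But eliminating $\theta'$ does not give a small set. For fixed $(Y,\theta)$ you can solve for $\theta'$ as soon as $\ell_{(Y,\theta)}(\tau)\ge\min_{\theta'}\ell_{(Y,\theta')}(\tau')$. Hence the projection has non-empty interior in $\CT_b(Y_0)\times\BR$, and its projection to $\CT_b(Y_0)$ is everything. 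This is exactly the $r=1$ failure of Lemma~\ref{lem analytic function} noted in the remark after it, so that lemma cannot be applied relation by relation.

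The twist $\theta'$ must be shared by the whole correspondence. The paper takes the full map $\iota:\CA_2\to\CA_2$ coming from $\CG_2(Y,\theta)=\CG_2(Y,\theta')$ and shows that $\CV(\iota)$ is analytic, using Lemma~\ref{lem analytic function} with $r\ge 2$. Since there are uncountably many $\iota$, it then needs the Noetherian argument of Lemma~\ref{lem worlpert analytic}: each $\CV(\iota)$ is cut out by finitely many arcs, so only countably many proper sets $\CV(\iota)$ occur. Genericity of the pair $(Y,\theta)$ then forces $\CV(\iota)=\CT_b(Y_0)\times\BR$. Note that this is genericity of $(Y,\theta)$, not of $Y$ alone, and that is what Theorem~\ref{sat generically unique} actually uses.

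The second missing piece is the heart of the paper's proof: that an $\iota$ valid on all of $\CT_b(Y_0)\times\BR$ forces $\theta'\equiv\theta$. You only assert this. Knowing that $\iota$ is type-preserving would not even suffice on its own. The equality $\cosh(d+\theta)=\cosh(d'+\theta')$ leaves a reflected branch $\theta'\equiv-\theta-2d$, and a priori the branch can change from arc to arc and between the endpoint combinations inside a type. The paper argues in three steps:
\begin{enumerate}
\item Lemma~\ref{lem iota preserves arcs} shows $\iota(\CA_{\alpha,\beta})\subset\CA_{\alpha,\beta}$, by choosing $Y$ where $\alpha,\beta$ are the unique shortest arcs and $\theta$ with $\ell_{(Y,\theta)}(\tau)=\ell_Y(\alpha)+\ell_Y(\beta)$.
\item Lemma~\ref{lem::differences_theta_thetaprime} deduces that the distance sets agree, $\calD^Y_{\alpha,\beta}(\theta)=\calD^Y_{\alpha,\beta}(\theta')$.
\item The reflection is excluded using the Dehn-twisted arcs of Lemma~\ref{lem get arcs1}, whose endpoints converge positively to points $A$ and $B$. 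Because the gluing reverses orientation, the pulled-back $\beta^n$-endpoints converge negatively to $B^\theta=(\sigma^\theta_Y)^{-1}(B)$ and $B^{\theta'}=(\sigma^{\theta'}_Y)^{-1}(B)$. A reflected point $B^{\theta'}\neq B^\theta$ with $d(A,B^\theta)=d(A,B^{\theta'})$ would then make $\calD^Y_{\alpha^n,\beta^n}(\theta)$ and $\calD^Y_{\alpha^n,\beta^n}(\theta')$ disjoint for large $n$.
\end{enumerate}
Running the same argument modulo antipodes gives the half-twist in the one-holed torus case.

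Your closing plan, checking via short-$\Gamma$ asymptotics that exceptional sets are proper, targets the wrong issue. Properness in $(Y,\theta,\theta')$ is automatic; what needs work is the elimination of $\theta'$ and the everywhere-valid case.
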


Assuming for the time being Proposition \ref{prop wolpert}, we conclude the proof of Theorem \ref{sat generically unique}:

\begin{proof}[Proof of Theorem \ref{sat generically unique}]
Let's recap the situation if $(X_0,\Gamma_0)$ is a single note marimba. Supposing $X_0$ generic, we get from Lemma \ref{lem recognize complement up to isometry} that $Y_0=X_0\setminus\Gamma_0$ is isometric to $X'\setminus\Gamma'$ for any isomelodic hyperbolic marimba $(X',\Gamma')$. It follows that when we identify
$$\Phi:\CT_b(Y_0)\times\BR\to\CT(X_0,\Gamma_0),\ (Y,\theta)\mapsto \Phi((Y,\theta))$$
as in Proposition \ref{prop FN},  we get that there are $\theta,\theta'\in\BR$ and isometries
$$(X_0,\Gamma_0)\simeq\Phi(Y_0,\theta)\text{ and }(X',\Gamma')\simeq\Phi(Y_0,\theta').$$
Now, since $X_0$ is generic, $Y_0$ is generic in $\CT_b(Y_0)$. We thus get from Proposition \ref{prop wolpert} that
    \begin{itemize}
        \item $\theta$ and $\theta'$ agree modulo $\ell_{\Phi(Y_0,\theta)}(\Gamma)=\ell_{X_0}(\Gamma)$ if no component of $Y_0$ is a one-holed torus, and that 
        \item $\theta$ and $\theta'$ agree modulo $\frac 12\ell_{\Phi(Y_0,\theta)}(\Gamma)=\frac 12\ell_{X_0}(\Gamma)$ if some component of $Y_0$ is a one-holed torus.
    \end{itemize}
It thus follows from the addendum to Proposition \ref{prop FN} that the marimbas $(X_0,\Gamma_0)\simeq\Phi(Y_0,\theta)$ and $(X',\Gamma')\simeq\Phi(Y_0,\theta')$ are isometric. We have proved Theorem \ref{sat generically unique} for single note marimbas.

Now to the general case. Suppose that $(X_0,\Gamma_0)$ and $(X',\Gamma')$ are isomelodic, let $\gamma\in\Gamma_0$ be a component, and let $\gamma'\in\Gamma'$ be the corresponding component, that is the one which plays the same note. Noting the we recover the melodies for $(X_0,\gamma)$ from those of $(X_0,\Gamma_0)$ by simply muting out all other notes, we get that the one-note marimbas $(X_0,\gamma)$ and $(X',\gamma')$ are again isomelodic. Since we have already proved the theorem for one-note marimbas, we get that there is an isometry
$$\phi_\gamma:(X_0,\gamma)\to(X',\gamma').$$
Proceeding in this way for all notes we get a collection of isometries $\{\phi_\gamma:X_0\to X',\ \gamma\in\Gamma_0\}$ such that each $\phi_\gamma$ sends $\gamma\in\Gamma$ to the corresponding component of $\Gamma'$.

If $X_0$ has at least genus $3$ and is generic, then the group $\Isom(X_0)$ of all isometries $X_0\to X_0$ is reduced to the identity. It follows that $\phi_\gamma=\phi_\eta$ for any two components $\gamma,\eta$ of $\Gamma$. In other words, the isometry $\phi_\gamma:X_0\to X'$ sends $\Gamma$ to $\Gamma'$ in a label preserving way.

In genus $2$, we still claim that $\phi_\gamma$ sends $\Gamma_0$ to $\Gamma'$ in a label preserving way. Indeed, if $\eta\in\Gamma_0$ is a second component then consider again the isometry $\phi_\gamma^{-1}\circ\phi_\eta:X_0\to X_0$. A generic surface $X_0$ of genus $2$ has exactly two isometries: the identity and the hyperelliptic involution $\iota_{X_0}:X_0\to X_0$. If $\phi_\eta^{-1}\circ\phi_\gamma=\Id$ then we have nothing to say. When $\phi_\eta^{-1}\circ\phi_\gamma=\iota_{X_0}$ we still have that 
$$\phi_\gamma(\eta)=\phi_\eta(\iota_X(\eta))=\phi_\eta(\eta)$$
because the hyperelliptic involution preserves every simple closed geodesic in $X_0$. Having proved that that $\phi_\gamma$ sends $\Gamma$ to $\Gamma'$ in a label preserving way, we are done with the proof of Theorem \ref{sat generically unique}.
\end{proof}

It remains to prove Proposition \ref{prop wolpert}. This will be the content of the next and final section of this paper.

\section{A variation on an argument of Wolpert's}

Although with differences, the proof of Proposition \ref{prop wolpert} will follow the basic strategy of Wolpert's proof of the fact that generic surfaces are determined by the their unmarked length spectra \cite{Wolpert} (see also \cite{Buser}). This is also the same kind of argument used by Le Quellec \cite{Nolwenn} to prove that generic surfaces with boundary are determined by their orthospectrum. 
\medskip

Fix from now on a single-note hyperbolic marimba $(X_0,\Gamma_0)$ and, as always let $Y_0$ be the metric completion of $X_0\setminus\Gamma_0$. To relax notation we will denote images under the map 
$$\Phi:\CT_b(Y_0)\times\BR\to\CT(X_0,\Gamma_0)$$
from Proposition \ref{prop FN} simply by $(Y,\theta)$ instead of $\Phi(Y,\theta)$.

From now on denote by $\CA_2=\CA_2(X_0,\Gamma_0)$ be the set of 2-step arcs in $(X_0,\Gamma_0)$.

\begin{lem}\label{lem worlpert analytic}
    Let $\iota:\CA_2\to\CA_2$ be a map. The set 
    $$\CV(\iota)=\left\{(Y,\theta)\in \CT_b(Y_0)\times\BR\ \middle\vert\begin{array}{l}\text{there is }\theta'\in\BR\text{ such that}\\
    \ell_{(Y,\theta')}(\iota(\tau))=\ell_{(Y,\theta)}(\tau)\text{ for all }\tau\in\CA_2\end{array}\right\}$$
    is the vanishing locus of an analytic map $\CT_b(Y)\times\BR\to\BR$. We moreover have that 
    $$\CV(\iota)=\left\{(Y,\theta)\in \CT_b(Y_0)\times\BR\ \middle\vert\begin{array}{l}\text{there is }\theta'\in\BR\text{ such that}\\
    \ell_{(Y,\theta')}(\iota(\tau))=\ell_{(Y,\theta)}(\tau)\text{ for all }\tau\in\CA'\end{array}\right\}$$
    for some finite subset $\CA'\subset\CA_2$.
\end{lem}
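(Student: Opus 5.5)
We would build the analytic map directly from Lemma \ref{lem analytic function}. Fix $(Y,\theta)$. The condition defining $\CV(\iota)$ has two parts: the right-hand sides $\cosh(\ell_{(Y,\theta)}(\tau))$ are analytic in $(Y,\theta)$ by Proposition \ref{prop FN}; and the left-hand sides $(\cosh(\ell_{(Y,\theta')}(\iota(\tau))))_\tau$ must lie in the set traced out as $\theta'$ varies. So for a finite family $\tau_1,\dots,\tau_r\in\CA_2$ with $r\ge 2$, set
$$G_{\tau_1,\dots,\tau_r}(Y,\theta)=P_{\iota(\tau_1),\dots,\iota(\tau_r)}\Big(Y,\cosh(\ell_{(Y,\theta)}(\tau_1)),\dots,\cosh(\ell_{(Y,\theta)}(\tau_r))\Big),$$
where $P_{\iota(\tau_1),\dots,\iota(\tau_r)}$ is the analytic map from Lemma \ref{lem analytic function} applied to the arcs $\iota(\tau_1),\dots,\iota(\tau_r)$. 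This $G$ is a composition of analytic maps, hence analytic on $\CT_b(Y_0)\times\BR$. By Lemma \ref{lem analytic function}, $G=0$ holds exactly when some $\theta'$ realizes $\ell_{(Y,\theta')}(\iota(\tau_j))=\ell_{(Y,\theta)}(\tau_j)$ for $j=1,\dots,r$. Since $\cosh$ is injective on $\BR_{>0}$, passing to $\cosh$ loses nothing.

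The difficulty is that $\CA_2$ is infinite, so we need the finiteness statement. Write $\CV_F$ for the analogous set defined using only a finite $F\subset\CA_2$ with $|F|\ge2$. Each $\CV_F=\{G_F=0\}$ is a real analytic subset of the connected analytic manifold $\CT_b(Y_0)\times\BR$, and $\CV_F\supseteq\CV_{F'}$ whenever $F\subseteq F'$. We exhaust $\CA_2$ by an increasing sequence $F_1\subset F_2\subset\cdots$ of finite sets and obtain a decreasing sequence of analytic sets. Two steps remain.

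The first step is to show $\bigcap_n\CV_{F_n}=\CV(\iota)$. The inclusion $\supseteq$ is clear. For the reverse, the witnessing $\theta'_n$ for $F_n$ must be controlled. Fixing two arcs $\tau_1,\tau_2$ whose images have distinct offsets $d_i$, the formula $a\cosh(d+\theta')+b$ for the two lengths pins $\theta'$ down uniquely: one length determines $\theta'$ up to the reflection $\theta'\mapsto -2d_1-\theta'$, and the second length breaks that symmetry. Hence $\theta'_n$ is eventually independent of $n$ and works for all of $\CA_2$. This is where the degenerate case matters. If no pair of arcs breaks the symmetry, I would instead take a subsequential limit of the $\theta'_n$, which are bounded because lengths grow like $\cosh\theta'$.

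The second step is to show that the decreasing chain stabilizes. This is the main obstacle, since Noetherianity of real analytic sets is only local. I would proceed as in Wolpert's argument. By the uniqueness of $\theta'$ just established, $\CV_{F_n}$ agrees, for $n\ge 2$, with the set where the analytic functions
$$H_\tau(Y,\theta)=\cosh\ell_{(Y,\theta'(Y,\theta))}(\iota\tau)-\cosh\ell_{(Y,\theta)}(\tau),\qquad \tau\in F_n,$$
all vanish. Here $\theta'(Y,\theta)$ is extracted analytically from the first two lengths on a suitable domain. The ideal generated by all the $H_\tau$ is locally finitely generated, so on each compact set finitely many $\tau$ suffice. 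To pass to a global finite $\CA'$, I would use the $\Map$-equivariance and the countability of the relevant finite configurations, or else simply take $\CA'$ large on a locally finite exhaustion. Once this is in place, $G_{\CA'}$, or the sum of squares of its components, is the single analytic function $\CT_b(Y_0)\times\BR\to\BR$ whose vanishing locus is $\CV(\iota)$.
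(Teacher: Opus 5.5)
Your construction of $G_F$ is exactly the paper's. There, $\CV_k(\iota)$ is written as the vanishing locus of $P_k\circ\phi$, where $P_k$ comes from Lemma \ref{lem analytic function} applied to $\iota(\tau_0),\dots,\iota(\tau_k)$ and $\phi(Y,\theta)=(Y,\cosh\ell_{(Y,\theta)}(\tau_0),\dots,\cosh\ell_{(Y,\theta)}(\tau_k))$. Your first step, $\bigcap_n\CV_{F_n}=\CV(\iota)$, is correct and supplies a point the paper leaves implicit. It can be said more simply: for a single arc, the equation $a\cosh(d+\theta')+b=c$ with $a>0$ has at most two solutions. So the sets of admissible $\theta'$ for $F_1\subset F_2\subset\cdots$ form a decreasing sequence of nonempty sets with at most two elements each, and hence have a common element.

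The gap is your second step, which is where the finite set $\CA'$, and with it the single global analytic function, must come from. You rightly observe that the descending chain condition for real-analytic zero sets is only local. Globally it fails: by Weierstrass, the sets $\{n,n+1,n+2,\dots\}\subset\BR$ are zero sets of entire functions that are real on $\BR$, and they form a strictly decreasing chain. But none of your remedies globalizes.
\begin{itemize}
\item Exhausting $\CT_b(Y_0)\times\BR$ by compacta $K_1\subset K_2\subset\cdots$ gives finite sets $\CA'_m$ that work on $K_m$, but $\bigcup_m\CA'_m$ is in general infinite.
\item Equivariance is not available: a mapping class $\phi$ of $(X_0,\Gamma_0)$ carries $\CV(\iota)$ to $\CV(\phi\circ\iota\circ\phi^{-1})$, not to itself, so for fixed $\iota$ there is nothing to exploit.
\item Countability of finite configurations only produces countably many candidate sets, not one finite $\CA'$.
\end{itemize}
The $H_\tau$ detour is also unnecessary: the $\CV_{F_n}$ are already global zero sets of the maps $G_{F_n}$, so local stationarity on compacta follows from them directly.

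The paper closes this step in one line, by appealing to the Noetherian property of the ring of analytic functions on the connected space $\CT_b(Y_0)\times\BR$. If you do not want to rely on that, your argument establishes only something weaker. Either every $\CV_k(\iota)$ equals $\CT_b(Y_0)\times\BR$, and then so does $\CV(\iota)$. Or some $\CV_k(\iota)\supseteq\CV(\iota)$ is the zero set of an analytic map that does not vanish identically on a connected manifold; it is then negligible, and it depends only on $\iota$ restricted to finitely many arcs. That suffices for the proof of Proposition \ref{prop wolpert}, where one only needs $\CW$ to be covered by countably many proper analytic sets. But it is not the statement of the lemma.
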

\begin{proof}
    Number the elements of $\CA_2$ as $\tau_0,\tau_1,\tau_2,\dots$ Since $\CT_b(Y)\times\BR$ is connected, the claim follows from the Noetherian property of the ring of analytic functions when we prove that for all $k\ge 1$ the set
    $$\CV_k(\iota)=\left\{(Y,\theta)\in \CT_b(Y_0)\times\BR\ \middle\vert\begin{array}{l}\text{there is }\theta'\in\BR\text{ such that}\\
    \ell_{(Y,\theta')}(\iota(\tau))=\ell_{(Y,\theta)}(\tau)\text{ for all }\tau\in\{\tau_0,\dots,\tau_k\}\end{array}\right\}$$
    is the vanishing locus of a non-zero analytic map. By Lemma \ref{lem analytic function} there is an analytic map
    $$P_k:\CT_b(Y_0)\times\BR_{>0}^{k+1}\to\BR^{k}$$
    such that 
    $$\left\{(Y,\cosh(\ell_{(Y,\theta')}(\iota(\tau_0))),\dots,\cosh(\ell_{(Y,\theta')}(\iota(\tau_k))))\in\CT_b(Y_0)\times\BR_{>0}^{k+1}\ \middle\vert\  \theta'\in\BR
    \right\}=\left\{P_k=0\right\}.$$
    Consider also the analytic map 
    \begin{align*}
        \phi&:\CT_b(Y_0)\times\BR\to\CT_b(Y_0)\times\BR_{>0}^{k+1}\\
        \phi&:(Y,\theta)\mapsto(Y,\cosh(\ell_{(Y,\theta)}(\tau_0)),\dots,\cosh(\ell_{(Y,\theta)}(\tau_r))),
    \end{align*}
    and note that $\CV_k(\iota)$ is nothing other than the vanishing locus of the analytic map $P_k\circ\phi:\CT_b(Y_0)\times\BR\to\BR^k$.
\end{proof}

It follows from Lemma \ref{lem worlpert analytic} that, for any map $\iota:\CA_2\to\CA_2$, the set $\CV(\iota)$ is either equal to $\CT_b(Y_0)\times\BR$, or is a lower-dimensional analytic subset. Our next goal is to prove that any $\iota:\CA_2\to\CA_2$ with $\CV(\iota)=\CT_b(Y_0)\times\BR$ preserves certain subsets $\CA_{\alpha,\beta}$ of $\CA_2$. These subsets are obtained as follows.

Let $\alpha,\beta\subset Y_0$ be two proper simple disjoint unoriented essential arcs with $\D\alpha\subset\D^-Y_0$ and $\D\beta\subset\D^+Y_0$. We will say that a 2-step arc in $(X,\Gamma)$ {\em is of type $(\alpha,\beta)$} when $\tau\setminus\Gamma$ is properly homotopic to $\alpha\cup\beta$. Denote by 
$$\CA_{\alpha,\beta}=\{\tau\in\CA_2\text{ of type }(\alpha,\beta)\}$$ 
the set of all such arcs.

\begin{lem}\label{lem iota preserves arcs}
    If $\iota:\CA_2\to\CA_2$ is such that $\CV(\iota)=\CT_b(Y_0)\times\BR$, then we have $\iota(\CA_{\alpha,\beta})\subset\CA_{\alpha,\beta}$ for any two proper simple disjoint unoriented essential arcs $\alpha,\beta\subset Y_0$ with $\D\alpha\subset\D^-Y_0$ and $\D\beta\subset\D^+Y_0$
\end{lem}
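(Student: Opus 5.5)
Assume $\CV(\iota)=\CT_b(Y_0)\times\BR$; the plan is to degenerate the metric so that arcs of type $(\alpha,\beta)$ are characterized by their length asymptotics, in the spirit of Wolpert. Fix $\alpha,\beta$ as in the statement and some $\tau\in\CA_{\alpha,\beta}$. For every $(Y,\theta)$ there is then a $\theta'=\theta'(Y,\theta)$ with $\ell_{(Y,\theta')}(\iota(\tau))=\ell_{(Y,\theta)}(\tau)$, and this holds simultaneously for all $\tau\in\CA_2$. The key input is the formula from the proof of Lemma \ref{lem analytic function}:
$$\cosh\ell_{(Y,\theta)}(\tau)=a_\tau(Y)\cosh(d_\tau(Y)+\theta)+b_\tau(Y),$$
where $a_\tau,b_\tau$ are built only from the orthogeodesic lengths $\ell_Y(\alpha),\ell_Y(\beta)$ of the two halves of $\tau$. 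Consequently, the pair of lengths of the halves of $\tau$ is recovered from the function $\theta\mapsto\ell_{(Y,\theta)}(\tau)$: its minimum over $\theta$ equals $\ell_Y(\alpha)+\ell_Y(\beta)$, and its growth rate gives $a_\tau=\sinh\ell_Y(\alpha)\sinh\ell_Y(\beta)$.

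\textbf{Step 1: pass from pointwise equalities to equalities of functions of $\theta$.} Here I would use the freedom in $\theta'$. For fixed $Y$, the two-parameter family $\theta\mapsto\cosh\ell_{(Y,\theta)}(\tau_i)$ for finitely many $\tau_i$ traces the curve described in Lemma \ref{lem analytic function}. The images under $\iota$ lie on the analogous curve for the arcs $\iota(\tau_i)$. The condition $\CV(\iota)=$ everything says that the first curve, as $\theta$ varies, is contained in the second. Taking $r\ge 2$ arcs, both are one-dimensional analytic curves given by the explicit equations $F_Y(A_Y(x))=0$. Containment of these branches of hyperbolas should force equality of the affine data. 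In particular it should give $a_{\iota(\tau)}(Y)=a_\tau(Y)$ and $b_{\iota(\tau)}(Y)=b_\tau(Y)$, hence
$$\{\ell_Y(\alpha'),\ell_Y(\beta')\}=\{\ell_Y(\alpha),\ell_Y(\beta)\}$$
for all $Y\in\CT_b(Y_0)$, where $\alpha',\beta'$ are the halves of $\iota(\tau)$. If the containment argument proves delicate, a fallback is to pick the finite set of arcs to include $\tau$ together with an auxiliary arc, and to compare minima over $\theta$ directly.

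\textbf{Step 2: identify the halves topologically.} This is the main obstacle. From Step 1 we have an identity of length functions of proper orthogeodesic arcs on all of $\CT_b(Y_0)$. We need to conclude that $\alpha'\simeq\alpha$ and $\beta'\simeq\beta$, with the endpoints on the correct boundary components $\D^\mp Y_0$. The tool is the standard fact that distinct homotopy classes of proper arcs have distinct length functions on Teichm\"uller space, here restricted to the slice $\CT_b(Y_0)$. I would prove this by pinching. Choose a simple closed curve or arc meeting $\alpha$ but not the candidate $\alpha'$, or vice versa, and degenerate $Y$ within $\CT_b(Y_0)$ so that it becomes very short. The collar lemma then sends the length of the arc crossing it to infinity while the other stays bounded. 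The constraint that the two boundary lengths must be equal is harmless for this, since we only pinch interior curves.

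To deal with the unordered pair, I would also make $\ell(\alpha)$ and $\ell(\beta)$ generic and distinct, and use that $\alpha$ ends on $\D^-$ and $\beta$ on $\D^+$. Which half ends on which boundary should then be detected by the sign convention in $d_\tau+\theta$ under twisting. Simplicity and disjointness of the halves of $\iota(\tau)$ follow from the identification itself, which gives $\iota(\tau)\in\CA_{\alpha,\beta}$.
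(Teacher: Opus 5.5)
\paragraph{Where the proposal breaks.} Your Step 1 can be made rigorous. Take as auxiliary arc the image of $\tau$ under the Dehn twist about $\Gamma_0$. Then both curves are nondegenerate hyperbola branches, and containment of one in the other forces $a_{\iota(\tau)}(Y)=a_\tau(Y)$ and $b_{\iota(\tau)}(Y)=b_\tau(Y)$.

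Step 2, however, has a genuine gap, for two separate reasons.

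\emph{The rigidity fact you invoke is false on the slice $\CT_b(Y_0)$.} Suppose $Y_0$ is a two-holed torus. Then every $Y\in\CT_b(Y_0)$ has an isometry exchanging $\D^-Y$ and $\D^+Y$ (the hyperelliptic involution of the glued surface). So a simple arc with endpoints on $\D^-Y_0$ and its twin on $\D^+Y_0$ have identical length functions on all of $\CT_b(Y_0)$. In that case the twin of $\alpha$ is the only simple arc on $\D^+Y_0$ disjoint from $\alpha$, so it is the only admissible $\beta$. Hence $\ell_Y(\alpha)=\ell_Y(\beta)$ for every $Y$, and your plan to make these lengths generic and distinct cannot be carried out.

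\emph{The pinching sketch presupposes a curve that need not exist.} It needs a simple closed curve meeting exactly one of $\alpha$ and the corresponding half $\hat\alpha$ of $\iota(\tau)$. Nothing forces $\hat\alpha$ to be simple. A non-simple arc can meet exactly the same simple closed curves as $\alpha$, only more often: in a one-holed torus component, take the orthogeodesic homotopic to $\alpha$, then half of $\D$, then $\alpha$ again. For such an arc, pinching a simple closed curve never makes one length blow up while the other stays bounded, so the dichotomy you rely on gives nothing. Note also that the closed-curve analogue of your rigidity statement is false for non-simple curves (Horowitz, Randol), so it cannot simply be quoted.

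\paragraph{How the paper avoids Step 2.} No rigidity of length functions is needed. The paper uses only the observation you already made, that $\min_\theta\ell_{(Y,\theta)}(\tau)=\ell_Y(\alpha)+\ell_Y(\beta)$. Since $\iota$ does not depend on the metric, it suffices to test a single well-chosen point.

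Take $Y\in\CT_b(Y_0)$ in which $\alpha$ and $\beta$ are the unique shortest arcs having an endpoint on $\D^-Y_0$, respectively on $\D^+Y_0$. Take $\theta$ realizing the minimum above, and $\theta'$ given by $\CV(\iota)=\CT_b(Y_0)\times\BR$. Let $\hat\alpha,\hat\beta$ be the two halves of $\iota(\tau)$; the middle crossing of $\Gamma_0$ switches sides, so one half touches $\D^-Y_0$ and the other touches $\D^+Y_0$. Then
$$\ell_Y(\hat\alpha)+\ell_Y(\hat\beta)\le\ell_{(Y,\theta')}(\iota(\tau))=\ell_{(Y,\theta)}(\tau)=\ell_Y(\alpha)+\ell_Y(\beta),$$
and unique minimality forces $\hat\alpha=\alpha$ and $\hat\beta=\beta$.

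Your own Step 1 conclusion, evaluated at this single $Y$, would close the argument at once. The global identification of length functions in Step 2 is unnecessary and, as stated, not available.
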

\begin{proof}
    Suppose that $\tau\in\CA_{\alpha,\beta}$. Since $\alpha,\beta$ are simple and disjoint there is some $Y\in\CT_b(Y_0)$ such that $\alpha,\beta$ are respectively the unique shortest homotopy classes of arcs in $Y$ with at least an endpoint in $\D^-Y_0$ and $\D^+Y_0$. There is also $\theta$ such that 
    $$\ell_{(Y,\theta)}(\tau)=\ell_Y(\alpha)+\ell_Y(\beta).$$
    Now, by assumption there is $\theta'$ such that 
    $$\ell_{(Y,\theta')}(\iota(\tau))=\ell_{(Y,\theta)}(\tau).$$
    Let $\hat\alpha,\hat\beta$ be the two arcs $\iota(\tau)\setminus\Gamma$ and note that, up to exchanging the names, we can assume that $\hat\alpha\cap\D^-Y_0\neq\emptyset$ and $\hat\beta\cap\D^+Y_0\neq\emptyset$. By the choice of $Y$ this means that 
    $$\ell_Y(\hat\alpha)\ge\ell_Y(\alpha)\text{ and }\ell_Y(\hat\beta)\ge\ell_Y(\beta)$$
    with equality if and only if $\hat\alpha=\alpha$ and $\hat\beta=\beta$. However, we must have equality because
    $$\ell_Y(\hat\alpha)+\ell_Y(\hat\beta)\le\ell_{(Y,\theta')}(\iota(\tau))=\ell_{(Y,\theta)}(\tau)=\ell_Y(\alpha)+\ell_Y(\beta).$$
    This proves that $\hat\alpha=\alpha$ and $\hat\beta=\beta$, as claimed. 
\end{proof}

Recall that we are writing $(Y,\theta)$ for $\Phi(Y,\theta)$, where $\Phi:\CT_b(Y_0)\times\BR^{\Gamma_0}\to\CT(X_0,\Gamma_0)$ is the map from Proposition \ref{prop FN}. In particular, the underlying surface of the hyperbolic marimba $(Y,\theta)=\Phi(Y,\theta)$ is obtained by identifying $\D^-Y$ and $\D^+Y$ via an orientation reversing isometry
$$\sigma_Y^{\theta}:\D_\gamma^-Y\to\D_\gamma^+Y.$$
Moreover, for $\theta,\theta'\in\BR$ the isometries $\sigma_Y^\theta$ and $\sigma_Y^{\theta'}$ are related by
\begin{equation}\label{eq relation between gluings}
    \sigma_{Y}^{\theta'}(x)=\sigma_{Y}^\theta(x+\theta'-\theta)
\end{equation}
where $x+\theta'-\theta$ is the point in $\D^-_\gamma Y$ that one reaches from $x$ by running for time one with signed speed $\theta'-\theta$. 

Continuing with our previous discussion, let $\alpha,\beta\subset Y_0$ be proper simple disjoint arcs with $\D\alpha\subset\D^-Y_0$ and $\D\beta\subset\D^+Y_0$. In the sequel we will be interested in the set
$$\calD_{\alpha,\beta}^Y(\theta)
=\{d_{\D^-Y}(a,b)\text{ where }a\in\D\alpha_Y\text{ and }b\in(\sigma_Y^\theta)^{-1}(\D\beta_Y)\}\}$$
of distances in $\D^-Y$ between points in $\D\alpha_Y$ and in $(\sigma_Y^\theta)^{-1}(\D\beta_Y)$. Here we are denoting, as we already did before, by $\alpha_Y,\beta_Y$ the orthogeodesic representatives in $Y\in\CT_b(Y_0)$ of $\alpha$ and $\beta$.

\begin{lem}\label{lem::differences_theta_thetaprime}
    Suppose that $\iota:\CA_2\to\CA_2$ is such that $\CV(\iota)=\CT_b(Y_0)\times\BR$ and let $Y\in\CT_b(Y_0)$ and $\theta,\theta'\in\BR$ such that $\ell_{(Y,\theta')}(\iota(\tau))=\ell_{(Y,\theta)}(\tau)$ for all $\tau\in\CA_2$. Then we have
    $$\calD_{\alpha,\beta}^Y(\theta)=\calD_{\alpha,\beta}^Y(\theta')$$
    for any two proper simple disjoint unoriented essential arcs $\alpha,\beta\subset Y_0$ with $\D\alpha\subset\D^-Y_0$ and $\D\beta\subset\D^+Y_0$.
\end{lem}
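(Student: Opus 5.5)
Fix $\alpha,\beta$ as in the statement. The set $\calD_{\alpha,\beta}^Y(\theta)$ consists of at most four numbers (two endpoints of $\alpha_Y$ on $\D^-Y$, two of $\beta_Y$ on $\D^+Y$), and my plan is to read these numbers off from the lengths of the arcs in $\CA_{\alpha,\beta}$. By Lemma \ref{lem iota preserves arcs}, $\iota$ maps $\CA_{\alpha,\beta}$ into itself, so the hypothesis $\ell_{(Y,\theta')}(\iota(\tau))=\ell_{(Y,\theta)}(\tau)$ says that the map $\tau\mapsto\iota(\tau)$ matches lengths of type-$(\alpha,\beta)$ arcs in $(Y,\theta)$ with lengths of type-$(\alpha,\beta)$ arcs in $(Y,\theta')$. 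The task is therefore to show that the multiset, or at least the set, of lengths $\{\ell_{(Y,\theta)}(\tau)\ :\ \tau\in\CA_{\alpha,\beta}\}$ determines $\calD_{\alpha,\beta}^Y(\theta)$, provided $Y$ is held fixed.

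The first step is to parametrize $\CA_{\alpha,\beta}$. A 2-step arc of type $(\alpha,\beta)$ is given by choosing an oriented copy of $\alpha$ ending at an endpoint $a\in\D\alpha_Y\subset\D^-Y$ and a copy of $\beta$ starting at an endpoint $b'\in\D\beta_Y\subset\D^+Y$. It is then joined across $\Gamma$ by a path that winds some number $n\in\BZ$ of times around $\Gamma$. In the notation of \eqref{eq thm 244 buser}, with $b=(\sigma_Y^\theta)^{-1}(b')$, the signed distance is $d=d_0+n\ell$, where $\ell=\ell_Y(\D^-Y)$ and $d_0$ is a lift of the signed distance from $a$ to $b$ along $\D^-Y$. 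This gives
$$\cosh\ell_{(Y,\theta)}(\tau)=\sinh\ell_Y(\alpha)\sinh\ell_Y(\beta)\cosh(d_0+n\ell)+\cosh\ell_Y(\alpha)\cosh\ell_Y(\beta).$$
Since $Y$ is fixed, so are $\ell_Y(\alpha)$, $\ell_Y(\beta)$ and $\ell$. The length of $\tau$ therefore determines $|d_0+n\ell|$, and conversely. The set of values $\{|d_0+n\ell|\}_{n\in\BZ}$ (over all four endpoint pairs, counting the two orientations of $\alpha$ and $\beta$) determines the residues $\pm d_0 \bmod \ell$. These residues are exactly the unsigned distances $d_{\D^-Y}(a,b)$, taken as the distance on the circle $\D^-Y$, i.e.\ $\min(d_0 \bmod \ell,\ \ell-(d_0\bmod \ell))$.

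The second step is then formal. The same formula holds at $\theta'$ with the same constants. Since $\iota$ is a bijection-like correspondence between $\CA_{\alpha,\beta}$ at $\theta$ and $\CA_{\alpha,\beta}$ at $\theta'$ preserving lengths, the sets $\{|d+n\ell|\}$ for $\theta$ are contained in those for $\theta'$, and hence $\calD^Y_{\alpha,\beta}(\theta)\subset\calD^Y_{\alpha,\beta}(\theta')$.

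The main obstacle is surjectivity. $\iota$ is only a map, not known to be onto, so I obtain only an inclusion. To get the reverse inclusion I would first try to apply the argument with the roles of $\theta,\theta'$ swapped. This needs a map $\iota':\CA_2\to\CA_2$ with $\CV(\iota')$ everything and matching $\theta'$ to $\theta$, which is not obviously available. Instead, I would use a cardinality argument: $\calD$ has at most four elements, and by \eqref{eq relation between gluings} changing $\theta$ to $\theta'$ rigidly rotates the points $(\sigma_Y^\theta)^{-1}(\D\beta_Y)$ by $\theta'-\theta$. Hence the configuration of distances has the same shape, and the multiplicity count (two points of $\D\alpha_Y$ against two points) is the same on both sides. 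An inclusion of finite sets of equal size, counted with the natural multiplicities coming from the four pairs $(a,b)$, is then an equality. I expect the careful bookkeeping of these multiplicities, especially the coincidences when distances agree, to be the delicate point.
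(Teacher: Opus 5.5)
Your first two steps are the paper's proof. You write each $\tau\in\CA_{\alpha,\beta}$ as $\alpha_Y$, then a signed run $t_\tau(\theta)$ along $\D^-Y$, then $\beta_Y$. With $Y$ fixed, \eqref{eq thm 244 buser} gives $\cosh t_\tau(\theta)=\cosh t_{\iota(\tau)}(\theta')$, and Lemma~\ref{lem iota preserves arcs} keeps $\iota(\tau)$ in $\CA_{\alpha,\beta}$. Writing $\calD(\theta)=\calD^Y_{\alpha,\beta}(\theta)$, this yields $\calD(\theta)\subset\calD(\theta')$. That is all the paper proves as well: it establishes $\{t_\tau(\theta)\}\subset\{\pm t_\tau(\theta')\}$ and then declares the lemma done. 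So you are right that the reverse inclusion needs an additional argument.

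The cardinality argument you propose for the reverse inclusion fails. What you have is an inclusion of \emph{sets}. Since $\iota$ is not known to be injective, no multiplicities are carried over. Moreover, the number of distinct values among the four distances $d_{\D^-Y}(a,b)$ is not invariant when the points $b\in(\sigma_Y^\theta)^{-1}(\D\beta_Y)$ are rigidly rotated. For instance, on a circle of length $1$ take $\D\alpha_Y=\{0,0.3\}$ and $(\sigma_Y^\theta)^{-1}(\D\beta_Y)=\{0.15,0.65\}$. Rotating the latter by $0.2$ gives $(\sigma_Y^{\theta'})^{-1}(\D\beta_Y)=\{0.35,0.85\}$. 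Then $\calD(\theta)=\{0.15,0.35\}\subsetneq\{0.05,0.15,0.35,0.45\}=\calD(\theta')$, even though both lists of four distances have the same length. So rigid rotation plus set inclusion does not give equality.

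Two remarks repair the situation.

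First, only the inclusion is ever used. In the proof of Proposition~\ref{prop wolpert}, choosing $x_n\in\calD^Y_{\alpha^n,\beta^n}(\theta)\subset\calD^Y_{\alpha^n,\beta^n}(\theta')$ gives $d_{\D^-Y}(A,B^\theta)=d_{\D^-Y}(A,B^{\theta'})$ in the limit. The final disjointness $\calD^Y_{\alpha^n,\beta^n}(\theta)\cap\calD^Y_{\alpha^n,\beta^n}(\theta')=\emptyset$ already contradicts the inclusion.

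Second, equality does follow if $\iota$ is injective. You may assume this in the application, since there $\iota$ comes from the multiset equality $\CG_2(Y,\theta)=\CG_2(Y,\theta')$ and can be chosen bijective. Set $\ell=\ell_Y(\D^-Y)$ and $N_\theta(x)=\#\{\tau\in\CA_{\alpha,\beta}:\ |t_\tau(\theta)|=x\}$.
\begin{itemize}
\item Injectivity gives $N_\theta\le N_{\theta'}$ pointwise.
\item Both functions are $\ell$-periodic on $(0,\infty)$, because each set $\{t_\tau\}$ is a union of cosets of $\ell\BZ$, with the same number of cosets for $\theta$ and $\theta'$.
\item Both have the same sum over $(0,\ell]$, namely twice the number of cosets.
\end{itemize}
Hence $N_\theta=N_{\theta'}$. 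The multisets of residues $\pm t_\tau\bmod\ell$ therefore agree, and $\calD(\theta)=\calD(\theta')$.
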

\begin{proof}
    Every $2$-step arc $\tau\in\CA_{\alpha,\beta}$ in $(Y,\theta)=\Phi(Y,\theta)$ of type $(\alpha,\beta)$ is homotopic to a uniquely determined path of the form:
    \begin{itemize}
        \item First run along $\alpha_Y$, arriving to a point $[p_\tau]\in\D\alpha_Y$,
        \item then run along $\D^-Y$ for some signed time $t_\tau(\theta)\in\BR$, arriving to a point $q_\tau\in (\sigma_Y^\theta)^{-1}(\D\beta_Y)$, 
        \item and finally run $\beta_Y$ starting at the point $\sigma_Y^\theta(q_\tau)$.
    \end{itemize}
    Note that $\vert t_\tau(\theta)\vert\mod(\ell(\D^-Y))=d_{\D^-Y}(p_\tau,p_\tau)\in\calD_{\alpha,\beta}^Y(\theta)$. Conversely, every element of $\calD_{\alpha,\beta}^Y(\theta)$ arises in this way. Lemma \ref{lem::differences_theta_thetaprime} will thus follow once we prove that
    \begin{equation}\label{eq blablabla123}
        \{t_\tau(\theta)\text{ with }\tau\in\CA_{\alpha,\beta}\}\subset \{t_\tau(\theta'),-t_\tau(\theta')\text{ with }\tau\in\CA_{\alpha,\beta}\}.
    \end{equation}
    The basic observation is that, with $a=\sinh(\ell_Y(\alpha_Y))\cdot\sinh(\ell_Y(\beta_Y))$ and $b=\cosh(\ell_X(\alpha_Y))\cdot\cosh(\ell_X(\beta_Y))$, we get from \eqref{eq thm 244 buser} that for all $\tau,\tau'\in\CA_{\alpha,\beta}$ we have
    $$\cosh(\ell_{(Y,\theta)}(\tau))=a\cdot\cosh(t_\tau(\theta))+b\text{ and }\cosh(\ell_{(Y,\theta')}(\tau'))=a\cdot\cosh(t_{\tau'}(\theta'))+b.$$
    It follows that we have
    $$\cosh(t_{\tau}(\theta))=\cosh(t_{\iota(\tau)}(\theta'))$$
    for all $\tau\in\CA_{\alpha,\beta}$, from where we get $t_{\tau}(\theta)=\pm t_{\iota(\tau)}(\theta')$. Since this holds for all $\tau\in\CA_{\alpha,\beta}$, \eqref{eq blablabla123} follows, concluding the proof of the lemma.
\end{proof}

Lemma \ref{lem::differences_theta_thetaprime} implies that there are strong relations between $\theta$ and $\theta'$. To exploit them we need to produce suitable arcs. Once again we need some terminology. First recall that the orientation of $X$ induces one on $Y$, and hence on $\D Y$. More specifically $\D Y$ is oriented so that when we promenade along it in positive direction, the interior stays to our right. This orientation induces a cyclic ordering on each component of $\D Y$. 
\medskip

We will say that a sequence $(E_n)_n$ of finite subsets of $\D Y$ is {\em positively convergent} if
\begin{enumerate}
    \item there is $x\in\D Y$ with $\lim_nx_n=x$ for any sequence $(x_n)$ with $x_n\in E_n$, and 
    \item there is $n_0$ such that for all $n>m>n_0$ any tuple $(x_m,x_n,x)$ with $x_m\in E_m$ and $x_n\in E_n$ is positive with respect to the cyclic ordering of the component of $\D Y$ containing $x$.
\end{enumerate}
If we replace (2) by the condition that the tuples $(x_m,x_n,x)$ are negative, then we say that $(E_n)_n$ is {\em negatively convergent}.
\medskip

With this notation, we have the following:

\begin{lem}\label{lem get arcs1}
    Suppose that no connected component of $Y_0$ is a one-holed torus. There are sequences $(\alpha^{n})_{n},(\beta^{n})_{n}$ of simple arcs with $\D\alpha^{n}\subset\D^-Y_0$ and $\D\beta^{n}\subset\D^+Y_0$ with the following properties:
    \begin{itemize}
        \item $\alpha^{n}$ and $\beta^{n}$ are disjoint for all $n$,
        \item both sequences $(\D\alpha^n_Y)_n$ and $(\D\beta^n_Y)_n$ are positively convergent in $\D Y$ for every $Y\in\CT_b(Y_0)$.
    \end{itemize}
\end{lem}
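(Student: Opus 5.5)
The plan is to produce the arcs by spiraling a fixed arc around the boundary components via Dehn twists. This makes the endpoints accumulate monotonically on a fixed point of $\D Y$, and the direction of accumulation is determined by the orientation alone, hence independent of $Y\in\CT_b(Y_0)$.

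\textbf{Step 1: choose the initial disjoint pair.} Fix a proper simple essential arc $\alpha^0$ with both endpoints on $\D^-Y_0$ and a proper simple essential arc $\beta^0$ with both endpoints on $\D^+Y_0$, disjoint from $\alpha^0$. The non-torus hypothesis is used here. If $Y_0$ is connected (non-separating $\Gamma_0$), it has genus $g-1$ with two boundary components. If $Y_0$ has two components, each has one boundary component and genus $\ge 2$, since one-holed tori are excluded. In either case there is enough room to choose such disjoint arcs, for example by placing them in disjoint subsurfaces each containing one boundary component. In a one-holed torus, by contrast, any essential arc based at the boundary is non-separating and cannot be made disjoint from twisted copies in the required way. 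I would also choose $\alpha^0$ so that it is not homotopic to a boundary-parallel arc.

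\textbf{Step 2: spiral around the boundary.} Let $c^-$ be a simple closed curve in $Y_0$ such that $c^-$ together with an arc of $\D^-Y_0$ bounds a region containing one endpoint of $\alpha^0$. Concretely, take $c^-$ to be a curve meeting $\alpha^0$ once near $\D^-Y_0$, or take a curve freely homotopic to a boundary-adjacent curve obtained from $\alpha^0\cup\D^-Y_0$. Set $\alpha^n=T_{c^-}^n(\alpha^0)$, and define $\beta^n=T_{c^+}^n(\beta^0)$ analogously, with $c^+$ chosen disjoint from $c^-$ and from $\alpha^0$. Disjointness of $\alpha^n$ and $\beta^n$ then holds for all $n$ automatically.

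\textbf{Step 3: positive convergence of the endpoints.} In the universal cover, the orthogeodesic representatives $\alpha^n_Y$ converge to a geodesic ray spiraling onto $c^-_Y$. Equivalently, their lifts converge to the perpendicular from the boundary lift to the axis of the corresponding lift of $c^-$. Consequently the feet $\D\alpha^n_Y$ converge to the foot $x$ of the orthogonal from $\D^-Y$ to $c^-$, or to the endpoint of the limiting spiral.

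Monotonicity is checked in a fixed lift $\tilde\gamma$ of $\D^-Y$. The lifts of $\alpha^n_Y$ starting on $\tilde\gamma$ are pairwise disjoint, because the arcs are orthogeodesic and the $\alpha^n$ are successive twists, so they are nested. Their feet therefore form a monotone sequence on $\tilde\gamma$, approaching $x$ from one side.

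\textbf{Step 4: the side is independent of $Y$.} The side of approach is determined by the sign of the twist and by the orientation convention, and not by the metric. Hence the sign is constant on the connected space $\CT_b(Y_0)$. If it comes out negative, replace $T_{c^-}$ by $T_{c^-}^{-1}$. Treat $\beta^n$ the same way.

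I expect the main obstacle to be the precise choice of $c^\pm$ and Step 3. One must ensure that both endpoints of $\alpha^n$ (it has two endpoints on $\D^-Y_0$) converge, as condition (1) demands for all points of $E_n$. The likely fix is to arrange that both ends of $\alpha^0$ cross $c^-$, or to use an arc whose two ends spiral to the same limit, so that all points of $\D\alpha^n_Y$ converge to a single $x$. One must also verify uniformity of the ordering claim for all $n>m>n_0$, which the nesting argument in the cover should provide.
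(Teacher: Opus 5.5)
Your overall strategy, Dehn-twisting a fixed disjoint pair so that the endpoints accumulate, is the paper's. But the step that makes the lemma true is exactly the one you leave as the \emph{main obstacle}, and your concrete choices of $c^-$ fail.

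\textbf{The twisting curve.}
\begin{itemize}
\item If $c^-$ meets $\alpha^0$ once, the two end segments of $\alpha^0$ reach $c^-$ from opposite sides. A twist turns the same way from either side, so the two ends of $T^n_{c^-}(\alpha^0)$ converge to two different rays, orthogonal to $\D^-Y$ and spiralling onto $c^-$ from opposite sides. Their feet are distinct, so condition (1) fails.
\item A curve obtained from a neighbourhood of $\alpha^0\cup\D^-Y_0$ is disjoint from $\alpha^0$, so twisting along it does nothing.
\item Merely having both ends cross $c^-$ is not enough either.
\end{itemize}
The missing idea, which is what the paper uses, is to twist along a curve $\eta$ meeting $\alpha$ exactly twice, such that the initial and final subarcs of $\alpha$ cut along $\eta$ are properly homotopic in $Y_0\setminus\eta$. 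Then both ends reach $\eta$ from the same side in the same class and spiral the same way. Hence both endpoints converge, from the same side, to the foot of a single orthogonal ray spiralling onto $\eta_Y$.

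The existence of such an $\eta$ is where the hypothesis enters, not your Step 1. In the disconnected case $\alpha^0$ and $\beta^0$ lie in different components and are automatically disjoint. The real obstruction is that in a one-holed torus the hyperelliptic involution makes the two endpoints of every simple orthogeodesic arc antipodal, so they can never coalesce.

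Also, in the connected case use the same twist for both arcs, as the paper does, or additionally require $c^-\cap\beta^0=\emptyset$. As you state it, disjointness of $\alpha^n$ and $\beta^n$ is not automatic.

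\textbf{Step 3.} Two claims here do not hold as stated.
\begin{itemize}
\item The lifts of $\alpha^n_Y$ do not converge to the perpendicular from $\tilde\gamma$ to the axis of $c^-$. They converge to the geodesic orthogonal to $\tilde\gamma$ and asymptotic to an endpoint of that axis.
\item The nesting argument gives no monotonicity. Lifts starting orthogonally on $\tilde\gamma$ are automatically pairwise disjoint (they share a common perpendicular), so their disjointness says nothing about how the feet are ordered as a function of $n$. Meanwhile the arcs $T^n(\alpha^0)$ and $T^m(\alpha^0)$ themselves intersect in $Y$ and are not nested in any useful sense.
\end{itemize}
The paper obtains condition (2) more cheaply. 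Once all points of $\D\alpha^n_Y$ converge to $x$ from one side without reaching it, it suffices to let the twist exponents $t_n$ grow fast enough. The side is fixed by the sign of the twist, as in your Step 4.
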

\begin{proof}
    Suppose that $Y_0$ is connected and take two disjoint arcs $\alpha,\beta$ with $\D\alpha\subset\D^-Y_0$ and $\D\beta\subset\D^+Y_0$. Then we can find a closed curve $\eta\subset Y_0$ which meets both of them exactly twice, and with the property that when we cut $\alpha$ (resp. $\beta$) along $\eta$, the initial and final arcs are properly homotopic in $Y_0\setminus\eta$. Denote by $\delta_\eta$ the left-twist along $\eta$.
    If $(t_n)_n$ is any sequence which tends to $\infty$ fast enough, then the arcs $\alpha^n=\delta_\eta^{t_n}(\alpha)$ and $\beta^n=\delta_\eta^{t_n}(\beta)$ have the properties we wanted for any $Y\in\CT_b(Y_0)$.

\begin{figure}[h]
\leavevmode \SetLabels
\endSetLabels
\begin{center}
\AffixLabels{\centerline{\includegraphics[width=0.4\textwidth]{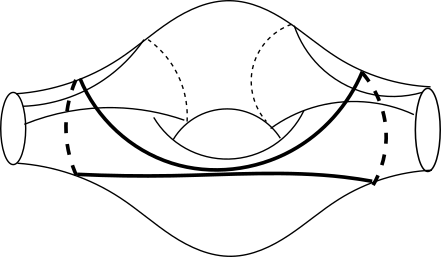}}\hspace{1cm}}
\vspace{-24pt}
\end{center}
\caption{The segment ${\bf T}\subset\eta$ used to calculate the twist is bold.}
\label{fig music twist1}
\end{figure}

    If $Y_0$ is disconnected we take $\alpha^n=\delta_{\eta_1}^{t_n}(\alpha)$ and $\beta^n=\delta^{t_n}_{\eta_2}(\beta)$ where $\delta_{\eta_1}$ and $\delta_{\eta_2}$ are now the left-twists along two curves in $Y_0$ which again have the property that when we cut $\alpha$ (resp. $\beta$) along them, the initial and final arcs are properly homotopic in $Y_0\setminus\eta_1$ (resp. $Y_0\setminus\eta_2$).
\end{proof}

When one of the connected components of $Y_0$ is a one-holed torus, then Lemma \ref{lem get arcs1} fails because the two endpoints of a simple arc are always opposite to each other. To deal with that issue we replace $\D^\pm Y$ by the quotient $(\D^\pm Y)/_{\tiny{\rm anti}}$ where we identify antipodal points. The circles $(\D^\pm Y)/_{\tiny{\rm anti}}$ are naturally oriented. We write $(\D Y)/_{\tiny{\rm anti}}=(\D^-_Y)/_{\tiny{\rm anti}}\sqcup(\D^+Y)/_{\tiny{\rm anti}}$. With this notation in place, we have the following version of Lemma \ref{lem get arcs1}, whose proof we leave to the reader:

\begin{lem}\label{lem get arcs2}
    There are sequences $(\alpha^{n})_{n},(\beta^{n})_{n}$ of simple arcs with $\D\alpha^{n}\subset\D^-Y_0$ and $\D\beta^{n}\subset\D^+Y_0$ with the following properties:
    \begin{itemize}
        \item $\alpha^{n}$ and $\beta^{n}$ are disjoint for all $n$,
        \item both sequences $((\D\alpha^n_Y)/_{\tiny{\rm anti}})_n$ and $((\D\beta^n_Y)/_{\tiny{\rm anti}})_n$ are positively convergent in $(\D Y)/_{\tiny{\rm anti}}$ for every $Y\in\CT_b(Y_0)$.\qed
    \end{itemize}
\end{lem}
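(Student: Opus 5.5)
We adapt the construction of Lemma \ref{lem get arcs1}, working with antipodal classes of endpoints. The only place the torus matters is when some component $Y'$ of $Y_0$ is a one-holed torus, say the one containing $\D^-Y_0$ (the other cases are treated symmetrically, or by Lemma \ref{lem get arcs1} itself, since positive convergence in $\D Y$ implies positive convergence of the images in $(\D Y)/_{\rm anti}$ once $n$ is large).

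\textbf{Step 1: the starting arcs.} Fix disjoint simple essential arcs $\alpha\subset Y'$ with $\D\alpha\subset\D^-Y_0$ and $\beta$ with $\D\beta\subset\D^+Y_0$. If $Y_0$ is connected, then $Y_0=Y'$ would be a twice-holed torus, not a one-holed one. So in the torus case $Y_0$ is disconnected, $\Gamma_0$ separates $X_0$, and the two boundary circles lie in different components. On a one-holed torus $Y'$ the two endpoints of any simple essential arc are antipodal for every hyperbolic metric, by the hyperelliptic involution. Hence $(\D\alpha_Y)/_{\rm anti}$ is a single point of $(\D^-Y)/_{\rm anti}$.

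\textbf{Step 2: twisting.} Choose a simple closed curve $\eta_1\subset Y'$ meeting $\alpha$ exactly once. Such a curve exists, since $\alpha$ is non-separating in the torus with the hole collapsed. Set $\alpha^n=\delta_{\eta_1}^{t_n}(\alpha)$. In the other component choose $\eta_2$ as in Lemma \ref{lem get arcs1}, or as $\eta_1$ again if that component is also a torus, and set $\beta^n=\delta_{\eta_2}^{t_n}(\beta)$. Disjointness is automatic, because the two families live in different components.

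\textbf{Step 3: positive convergence.} We must show that the antipodal class of the endpoint of $\alpha^n_Y$ converges monotonically, in the positive cyclic direction, for every $Y$. Lift to the universal cover. The orthogeodesic $\alpha^n_Y$ is perpendicular to a fixed lift $\tilde\D$ of $\D^-Y$ and to its image under $\eta_1^{t_n}$ composed with a fixed deck element. As $t_n\to\infty$ these target lifts converge to an endpoint $\xi$ of the axis of $\eta_1$. The feet on $\tilde\D$ therefore converge to the foot $x$ of the perpendicular from $\tilde\D$ towards $\xi$, or to its nearest-point projection.

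Because $\delta_{\eta_1}$ is a left twist, the target lifts approach $\xi$ from one fixed side. The feet therefore approach $x$ monotonically from one side, and the orientation convention (interior on the right) should turn this into positivity. If the sign comes out wrong, we replace $\delta_{\eta_1}$ by its inverse. Taking $t_n\to\infty$ fast enough ensures the ordering condition (2) for all $n>m>n_0$ uniformly. Since the limit is determined topologically by $\eta_1$ and the pattern of lifts, this choice works for every $Y\in\CT_b(Y_0)$.

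\textbf{Main obstacle.} The delicate point is the uniformity in $Y$ of the sign and monotonicity in Step 3, especially verifying that the convergence is one-sided and not oscillating. Passing to $(\D Y)/_{\rm anti}$ is what rescues the torus case: the two endpoints converge to antipodal limits from opposite cyclic sides in $\D Y$, so the pair never forms a positively convergent set there, but their common antipodal class does converge positively in the quotient.
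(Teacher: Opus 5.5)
Your proposal is correct and is the natural adaptation of the twisting construction from the proof of Lemma \ref{lem get arcs1}. The paper leaves this lemma to the reader, and this is evidently the argument it has in mind. In the one-holed torus you twist along a curve $\eta_1$ meeting $\alpha$ once and track a single endpoint, which suffices because the other endpoint is its antipode.

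The monotonicity you flag as delicate holds for every strictly increasing $(t_n)$ and every $Y$. Write the target lifts as $h^{t_n}k\tilde\D$, with $h$ the deck transformation along $\eta_1$. These lifts are pairwise disjoint, $h$ pushes their shadows at infinity monotonically towards $\xi$, and nearest-point projection to $\tilde\D$ is monotone. The side of approach is fixed by the cyclic order of the relevant points at infinity, which is topological, so it is uniform in $Y$.

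One correction to your closing remark. The hyperelliptic involution acts on $\D^-Y$ as a rotation, so the two endpoints of $\alpha^n_Y$ approach their antipodal limits from the \emph{same} side. What fails in $\D Y$ is only condition (1), because the two limits are different.
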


We are now ready to prove Proposition \ref{prop wolpert}, which we restate here for the convenience of the reader:

\begin{named}{Proposition \ref{prop wolpert}}
    Let $(X_0,\Gamma_0)$ be a single note marimba, set $Y_0=X_0\setminus\Gamma_0$, and let $Y\in\CT_b(Y_0)$ be generic. For all $\theta\in\BR$ the following holds: if $\theta'\in\BR$ is such that the marimbas $\Phi(Y,\theta)$ and $\Phi(Y,\theta')$ are isomelodic, then 
    \begin{itemize}
        \item $\theta'=\theta\mod(\ell_{\Phi(Y,\theta)}(\Gamma))$ if no component of $Y_0$ is a one-holed torus.
        \item $\theta'=\theta\mod(\frac 12\ell_{\Phi(Y,\theta)}(\Gamma))$ if some component of $Y_0$ is a one-holed torus.
    \end{itemize}
    Here $\Phi$ is as in Proposition \ref{prop FN}.
\end{named}
\begin{proof}
    We get from Proposition \ref{prop hear orthospectrum} that the melody of a any random vector determines the $k$-step orthospectrum of the marimba. It follows that if for some $Y\in\CT_b(Y_0)$ and $\theta,\theta'\in\BR$ the marimbas $(Y,\theta)=\Phi(Y,\theta)$ and $(Y,\theta')=\Phi(Y,\theta')$ are isomelodic, then we have
    $$\CG_2(Y,\theta)=\CG_2(Y,\theta').$$
    Said differently, their is a map $\iota:\CA_2(X_0,\Gamma_0)\to\CA_2(X_0,\Gamma_0)$ with
    \begin{equation}\label{eq map iota isomelodic}
        \ell_{(Y,\theta)}(\tau)=\ell_{(Y,\theta')}(\iota(\tau))\text{ for all }\tau\in\CA_2=\CA_2(X_0,\Gamma_0).
    \end{equation}
    The map $\iota$ depends here on $Y,\theta$ and $\theta'$. 
    
    For any such map $\iota$ consider as in Lemma \ref{lem worlpert analytic} the set
        $$\CV(\iota)=\left\{(Y,\theta)\in \CT_b(Y_0)\times\BR\ \middle\vert\begin{array}{l}\text{there is }\theta'\in\BR\text{ such that}\\
    \ell_{(Y,\theta')}(\iota(\tau))=\ell_{(Y,\theta)}(\tau)\text{ for all }\tau\in\CA_2\end{array}\right\}$$
    and let 
    $$\CI=\{\iota:\CA_2(X_0,\Gamma_0)\to\CA_2(X_0,\Gamma_0)\text{ such that }\CV(\iota)\neq\CT_b(Y_0)\times\BR\}.$$ 
    Although the set $\CI$ is uncountable, it follows from Lemma \ref{lem worlpert analytic} that the set $\{\CV(\iota),\ \iota\in\CI\}$ consists of countably many proper analytic subsets of $\CT_b(Y_0)\subset\BR\simeq\BR^{3\vert\chi(X)\vert}$. It follows that $\CW=\cup_{\iota\in\CI}\CV(\iota)$ is negligible. In other words, if for a generic $Y\notin\CW$ we have $\theta,\theta'\in\BR$ such that $(Y,\theta)$ and $(Y,\theta')$ are isomelodic, then there is map $\iota:\CA_2(X_0,\Gamma_0)\to\CA_2(X_0,\Gamma_0)$ with $\CV(\iota)=\CT_b(Y_0)\times\BR$ and satisfying \eqref{eq map iota isomelodic}. We are going to show that this implies that $\theta$ and $\theta'$ are related as predicted by the claim of the proposition we are trying to prove.
    
    Since $\CV(\iota)=\CT_b(Y_0)\times\BR$, we get from Lemma \ref{lem::differences_theta_thetaprime} that 
        $$\calD_{\alpha,\beta}^Y(\theta)=\calD_{\alpha,\beta}^Y(\theta')$$
    for any two proper simple disjoint unoriented essential arcs $\alpha,\beta\subset Y_0$ with $\D\alpha\subset\D^-Y_0$ and $\D\beta\subset\D^+Y_0$. Here 
    $$\calD_{\alpha,\beta}^Y(\theta)
    =\{d_{\D^-Y}(a,b)\text{ where }a\in\D^-Y(\alpha)\text{ and }b\in(\sigma_Y^\theta)^{-1}(\D\beta_Y)\}\}$$
    is the set of distances in $\D^-Y$ between points in $\D\alpha_Y$ and in $(\sigma_Y^\theta)^{-1}(\D\beta_Y)$, where $\sigma_Y^\theta:\D^-Y\to\D^+Y$ is the orientation reversing isometric gluing we use to obtain the marimba $(Y,\theta)=\Phi(Y,\theta)$ from $Y$. 

    Let's now suppose that no component of $Y_0$ is a one-holed torus and let $(\alpha^{n})_{n}$ and $(\beta^{n})_{n}$ be sequences of arcs as provided by Lemma \ref{lem get arcs1}. Since the two arcs $\alpha^n$ and $\beta^n$ are simple and disjoint, we have  
    \begin{equation}\label{eq want to be done with this}
    \calD_{\alpha^{n},\beta^{n}}^Y(\theta)=\calD_{\alpha^{n},\beta^{n}}^Y(\theta')\text{ for all }n.
    \end{equation}
    Recalling that the two sequences $(\D\alpha^n_Y)_n$ and $(\D\beta^n_Y)_n$ are positively convergent, let
    $A\in\D^-Y$ and $B\in\D^+Y$ be such that 
    $$A=\lim_na_n \text{ and }B=\lim_nb_n$$
    for any sequences $(a_n)$ and $(b_n)$ with $a_n\in\D\alpha^n_Y$ and $b_n\in\D\beta^n_Y$. Denoting by 
    $$B^\theta=(\sigma_Y^\theta)^{-1}(B)$$
    we get from \eqref{eq want to be done with this} that
    \begin{equation}\label{eq almost done with this}
    d_{\D^-Y}(A,B^\theta)=d_{\D^-Y}(A,B^{\theta'}).
    \end{equation}
    Seeking a contradiction we suppose that $B^\theta\neq B^{\theta'}$. To begin with note hat, up to interchanging their roles, we might assume that the tuple $(B^\theta,A,B^{\theta'})$ is positively oriented with respect to the cyclic ordering induced by the orientation of $\D^-Y$. The key observation now is that while $(\D\alpha^n_Y)_n$ is positively convergent to $A$, the sequences $((\sigma_Y^\theta)^{-1}(\D\beta^n_Y))_n$ and $((\sigma_Y^{\theta'})^{-1}(\D\beta^n_Y))_n$ are negatively convergent to $B^{\theta}$ and $B^{\theta'}$ respectively. This means that for large $n$ we have
    \begin{itemize}
        \item $\D\alpha^n_Y$ is contained in the (positively oriented) interval $(B^\theta,A)\subset\D^-Y$,
        \item $(\sigma_Y^\theta)^{-1}(\D\beta^n_Y)$ is also contained in the (positively oriented) interval $(B^\theta,A)\subset\D^-Y$, and
        \item $(\sigma_Y^\theta)^{-1}(\D\beta^n_Y)$ is disjoint of the (positively oriented) interval $(A,B^{\theta'})\subset\D^-Y$.
    \end{itemize}

    \begin{figure}[h]
\leavevmode \SetLabels
\L(.455*1.02) $A$\\%
\L(.665*.38) $B^\theta$\\%
\L(.245*.365) $B^{\theta'}$\\%
\endSetLabels
\begin{center}
\AffixLabels{\centerline{\includegraphics[width=0.4\textwidth]{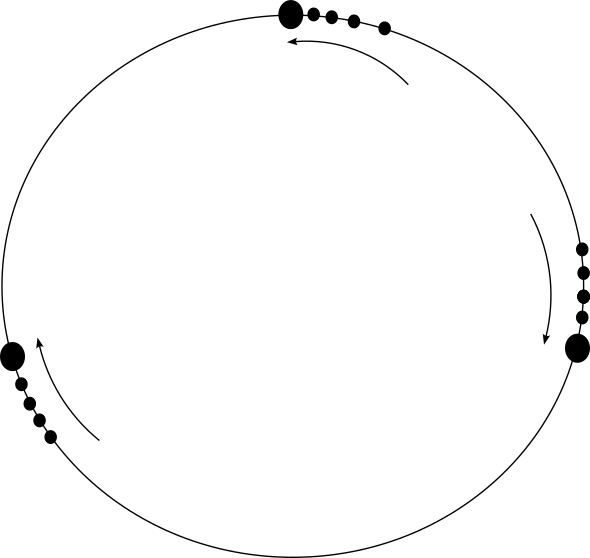}}\hspace{1cm}}
\vspace{-24pt}
\end{center}
\end{figure}

    Since both interval $(B^\theta,A)$ and $(A,B^{\theta'})$ have the same length by \eqref{eq almost done with this}, we deduce that largest distance from points in $(\sigma_Y^\theta)^{-1}(\D\beta^n_Y)$ to points in $\D\alpha^n_Y$ is smaller than the smallest distance from points in $(\sigma_Y^{\theta'})^{-1}(\D\beta^n_Y)$ to points in $\D\alpha^n_Y$, meaning that 
    $$\calD_{\alpha^{n},\beta^{n}}^Y(\theta)\cap\calD_{\alpha^{n},\beta^{n}}^Y(\theta')=\emptyset.$$
    This contradicts \eqref{eq want to be done with this}, showing that $B^\theta= B^{\theta'}$. 

    Now, both points $B^\theta$ and $B^{\theta'}$ are the images of the same $B$ under the gluing maps $(\sigma_Y^\theta)^{-1}$ and $(\sigma_Y^{\theta'})^{-1}$. These two maps differ by the rotation $x\mapsto x+\theta'-\theta$ of $\D^-Y$. Since both of them map $B$ to the same point, it follows that they both agree, meaning that $\theta'-\theta$ is a multiple of $\ell_Y(\D^-Y)$, as we needed to prove. 

    We have proved the proposition when no component of $Y_0$ is a one-holed torus. When a component is a one-holed torus, then we can repeat the argument word by word, replacing $\D^-Y$ by $(\D^-Y)/_{\tiny{\rm anti}}$ and Lemma \ref{lem get arcs1} by Lemma \ref{lem get arcs2}. When we do that we deduce that the equality $B^\theta= B^{\theta'}$ holds in $(\D^-Y)/_{\tiny{\rm anti}}$, from where we deduce that either $B^{\theta'}=B^{\theta}$ or $B^{\theta'}$ is the antipode of $B^\theta$. It follows that $\theta'-\theta$ is a multiple of $\frac 12\ell_Y(\D^-Y)$. This concludes the proof of Proposition \ref{prop wolpert}.
\end{proof}

\begin{appendix}
\section{Appendix}
As we mentioned earlier, Le Quellec \cite{Nolwenn} proved that if $Y_0$ is a compact orientable surface with boundary, then generic points $Y\in\CT(Y_0)$ are determined by their orthospectra $\CG(Y)$. In the proof of Lemma \ref{lem recognize complement up to isometry} we used the fact that this also holds for generic points $Y\in\CT_b(Y_0)$ when $Y_0$ has two boundary components of the same length. The issue is that points in $\CT_b(Y_0)$ are not generic in $\CT(Y_0)$. We explain here how to modify Le Quellec's argument to give us what we want:

\begin{prop}\label{prop appendix}
    Let $Y_0$ be a compact connected surface with 2 boundary components $\D^{\pm}Y_0$. For generic 
    $$Y\in\CT_b(Y_0)=\{Y\in\CT(T_0)\text{ with }\ell_Y(\D^-Y)=\ell_Y(\D^+Y)\}$$ 
    the following holds: if $Y'\in\CT_b(Y_0)$ is such that $\CG(Y')=\CG(Y)$, then $Y$ and $Y'$ are isometric.
\end{prop}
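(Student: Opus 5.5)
We follow Wolpert's strategy, adapted by Le Quellec, but run it inside the analytic submanifold $\CT_b(Y_0)\simeq\BR^{3\vert\chi(Y_0)\vert+1}$ instead of all of $\CT(Y_0)$; this is exactly what was done in the previous section for $\CT_b(Y_0)\times\BR$. Let $\CA_1$ be the countable set of homotopy classes of proper essential arcs in $Y_0$, that is, of $1$-step arcs. If $\CG(Y)=\CG(Y')$, there is a bijection $\iota:\CA_1\to\CA_1$ with $\ell_{Y'}(\iota(\alpha))=\ell_Y(\alpha)$ for all $\alpha$. Choose a finite filling collection of arcs $\alpha_1,\dots,\alpha_N$ whose length functions give a real analytic embedding of $\CT(Y_0)$, and hence of $\CT_b(Y_0)$. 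Such collections exist: suitable orthogeodesic lengths determine the hexagon decompositions of a pants decomposition, together with the twist parameters.

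\textbf{Step 1: reduction to countably many maps.} For each of the countably many $N$-tuples $(\iota(\alpha_1),\dots,\iota(\alpha_N))$ there is at most one $Y'\in\CT_b(Y_0)$ realizing the prescribed lengths, and it depends analytically on $Y$, defined on an analytic subset. Denote this map by $Y'=F_\iota(Y)$. For every further arc $\alpha$, the condition $\ell_{F_\iota(Y)}(\iota(\alpha))=\ell_Y(\alpha)$ is then an analytic equation on the connected manifold $\CT_b(Y_0)$. As in Lemma \ref{lem worlpert analytic}, the Noetherian property shows that for a given $\iota$ the set
$$\CV(\iota)=\{Y\in\CT_b(Y_0)\ \vert\ \ell_{Y'}(\iota(\alpha))=\ell_Y(\alpha)\text{ for all }\alpha\text{ and some }Y'\in\CT_b(Y_0)\}$$
is either all of $\CT_b(Y_0)$ or a proper analytic subset. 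There are only countably many distinct such sets, since each is cut out by finitely many arcs. Hence, for $Y$ outside a negligible set, any length-preserving $\iota$ satisfies $\CV(\iota)=\CT_b(Y_0)$.

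\textbf{Step 2: the identity on the whole space forces $\iota$ to be topological.} Assume now that $\CV(\iota)=\CT_b(Y_0)$. We show that $\iota$ preserves disjointness and simplicity of arcs, by degenerating within $\CT_b(Y_0)$. For a simple arc $\alpha$, pinch a multicurve disjoint from it while keeping the two boundary lengths equal; this is possible because pinching interior curves does not affect $\ell(\D^\pm Y)$. Alternatively, make the boundary curves long. Exactly as in Lemma \ref{lem iota preserves arcs}, use shortest-arc characterizations together with collar estimates: an arc crossing a short curve $\eta$ has length at least $2\log(1/\ell(\eta))$. These arguments show that $\iota$ maps simple arcs to simple arcs and preserves disjointness, so $\iota$ induces a simplicial automorphism of the arc complex of $Y_0$. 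By the known rigidity of arc complexes (Irmer–Korkmaz, Disarlo), $\iota$ is then induced by a mapping class $\phi$, possibly orientation reversing. Whether $\phi$ swaps $\D^\pm Y_0$ is harmless, since $\CT_b(Y_0)$ is invariant under that swap.

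\textbf{Step 3: conclusion.} Since $\ell_{Y'}(\phi(\alpha))=\ell_Y(\alpha)$ for all arcs, and the filling arcs determine the point of Teichm\"uller space, we get $Y'=\phi\cdot Y$ in $\CT(Y_0)$, so $Y$ and $Y'$ are isometric. The main obstacle is Step 2. The degenerations used in Le Quellec's proof may require making $\ell(\D^-Y)\ne\ell(\D^+Y)$, so each must be re-checked inside $\CT_b(Y_0)$. I would first try to adapt them using only interior pinchings and simultaneous symmetric scaling of the two boundary lengths, and verify that these suffice to distinguish disjoint from intersecting pairs of arcs.
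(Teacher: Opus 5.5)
Your Step 1 matches the paper, which simply notes that Le Quellec's genericity argument runs unchanged inside $\CT_b(Y_0)$. Step 3 is also fine. The gap is Step 2, which is where all the content lies.

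\textbf{The degenerations in Step 2 say nothing about $Y'$.} The arc $\iota(\alpha)$ lives on $Y'=F_\iota(Y)$, a surface about which you know only the identities $\ell_{Y'}(\iota(\tau))=\ell_Y(\tau)$. Pinching a multicurve disjoint from $\alpha$ does not make $\alpha$ short, and collar estimates on $Y$ only tell you which arcs are long \emph{on $Y$}. The shortest-arc argument of Lemma \ref{lem iota preserves arcs} worked only because both marimbas there had the same cut surface $Y$, which is exactly what is unknown here.

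What you need is to make the \emph{arcs themselves} short. Your remark about long boundaries points in this direction. Short arcs matter because universal facts then transfer to the unknown $Y'$: on any hyperbolic surface with geodesic boundary, an orthogeodesic arc of length $<\epsilon_0$ is simple, and two such arcs are disjoint. To see this, double the surface and apply the collar lemma.

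So the key input is that any two disjoint simple arcs $\tau_1,\tau_2$ can be made simultaneously $\epsilon$-short inside $\CT_b(Y_0)$. The paper proves this in Lemma \ref{lem app 1}: glue $\D^-Y_0$ to $\D^+Y_0$, choose a multicurve in the closed surface whose pieces include $\tau_1,\tau_2$, and shrink it. This keeps $\ell(\D^-)=\ell(\D^+)$ automatically.

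Note also that you only obtain an \emph{injective} simplicial map of the arc complex. You cannot rerun the argument for $\iota^{-1}$, since $F_\iota$ need not be onto. So you need Irmak--McCarthy's theorem on injective simplicial maps, not rigidity of automorphisms.

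\textbf{For the two-holed torus, your Step 2 assertion is false.} Every $Y\in\CT_b(Y_0)$ has an isometry exchanging $\D^\pm Y$, namely the restriction of the hyperelliptic involution of the glued genus-$2$ surface. It sends a nonseparating simple arc $\alpha$ with $\D\alpha\subset\D^-Y_0$ to its twin $\alpha'$, the unique simple arc with endpoints on $\D^+Y_0$ disjoint from $\alpha$.

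Consider the bijection that swaps $\alpha$ and $\alpha'$ and fixes every other arc. It satisfies the length identities with $Y'=Y$ for every $Y$, so $\CV(\iota)=\CT_b(Y_0)$. But take any simple arc $\beta\neq\alpha$ with $\D\beta\subset\D^-Y_0$ and $\beta\cap\alpha=\emptyset$. It must cross $\alpha'$, since otherwise it would be the twin of $\alpha'$, i.e.\ $\alpha$. Hence this $\iota$ sends the disjoint pair $\alpha,\beta$ to the intersecting pair $\alpha',\beta$, and $\iota$ is not induced by a mapping class.

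Correspondingly, two distinct disjoint arcs on $\D^-Y_0$ cannot in general be made short simultaneously in $\CT_b(Y_0)$, because their twins would be short too. This is why Lemma \ref{lem app 1} excludes this case. The paper handles it in two steps:
\begin{enumerate}
\item It replaces $\iota$ by $\iota'$, passing to the twin whenever $\iota$ moves an arc to the other boundary component.
\item It proves that $\iota'$ preserves disjointness for pairs on the same boundary component by a separate length estimate. Making $\tau_1$ and two arcs $\kappa_1,\kappa_2$ joining $\D^-Y_0$ to $\D^+Y_0$ short forces $\ell(\tau_2)\lesssim-2\log\epsilon$, while every arc on $\D^-$ crossing $\tau_1$ has length $\gtrsim-4\log\epsilon$.
\end{enumerate}
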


Let us start explaining the basic idea of Le Quellec's argument--it follows closely what Wolpert \cite{Wolpert} and Buser \cite{Buser} did to prove that generic points in Teichm\"uller space are determined by their unmarked length spectrum.

Let $\CA$ be the set of all proper arcs in $Y_0$. The assumption that $\CG(Y')=\CG(Y)$ implies that there is a bijection $\iota:\CA\to\CA$ depending on $Y$ and $Y'$ with 
\begin{equation}\label{eq appendix 1}
\ell_{Y'}(\iota(\tau))=\ell_Y(\tau)\text{ for all }\tau\in\CA.
\end{equation}
Although we are cutting corners here, Le Quellec considers for any such map $\iota:\CA\to\CA$ the set 
$$\CW(\iota)=\{X\in\CT(Y_0)\text{ such that there is }X'\in\CT(Y_0)\text{ satisfying \eqref{eq appendix 1}}\}$$ 
and notices that for any $X\in\CW(\iota)$ the point $X'$ is uniquely determined--this is so because the lengths of the arcs in an hexagon decomposition determine the point in Teichm\"uller space. She then argues that the set $\CW(\iota)$ is analytic, deducing that either it is lower-dimensional or equal to the whole of Teichm\"uller space $\CT(Y_0)$. Moreover, she argues that the union of the sets $\CW(\iota)$ with $\CW(\iota)\neq\CT(Y_0)$ is a countable union of proper analytic sets. Points in there are thus non-generic. It follows that if $Y$ is generic and $Y'$ is such that \eqref{eq appendix 1} holds for some map $\iota:\CA\to\CA$, then $\CW(\iota)=\CT(Y_0)$. When we replace $\CT(Y_0)$ by $\CT_b(Y_0)$, this part of Le Quellec's argument goes through smoothly. 

To conclude the proof of her theorem, Le Quellec argues that if $\CW(\iota)=\CT(Y_0)$ then there is an isometry $F:Y\to Y'$. Accordingly, to prove Proposition \ref{prop appendix} it suffices to prove the following:

\begin{named}{Claim A}
    Suppose that $\iota:\CA\to\CA$ is such that for all $Y\in\CT_b(Y_0)$ there is $Y'\in\CT_b(Y_0)$ such that \eqref{eq appendix 1} holds. Then there is an element $\phi$ in the (extended) mapping class group with $Y'=\phi(Y)$.
\end{named}

In her case, Le Quellec argues this by showing that specific types of arcs are mapped to arcs of the same type. She uses pretty delicate geometric arguments to do this, and unfortunately it would involve quite a bit of work to make this part of her reasoning go through in our setting. To prove the Claim we will rely on much softer geometric arguments, outsourcing most of the work to the fact that every injective simplicial map from the arc complex to itself is induced by a mapping class. Recall that the {\em arc complex} $\BA(Y_0)$ is the simplicial flag complex whose vertices are the homotopy classes of arcs in $Y_0$ and where two vertices span an edge if the two respective homotopy classes have disjoint representatives.

Before launching the proof of Claim A, let us make the following simple observation. 

\begin{lem}\label{lem app 1}
    Suppose that $Y_0$ is not a two-holed torus and let $\tau_1,\tau_2\in\CA$ be disjoint simple arcs. For any $\epsilon>0$ there is $Y\in\CT_b(Y_0)$ with $\ell_Y(\tau_2),\ell_Y(\tau_1)<\epsilon$.
\end{lem}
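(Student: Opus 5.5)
Since $\tau_1,\tau_2$ are disjoint simple proper arcs, I will complete them to a hexagon decomposition of $Y_0$, that is, a maximal system $\tau_1,\tau_2,\tau_3,\dots,\tau_N$ of pairwise disjoint, pairwise non-homotopic simple proper arcs cutting $Y_0$ into right-angled hexagons (a maximal simplex of the arc complex $\BA(Y_0)$). As recalled in the discussion of Le Quellec's argument, the lengths of the arcs in a hexagon decomposition are global coordinates on $\CT(Y_0)$. More precisely, for any choice of positive numbers $(x_j)_{j=1,\dots,N}$ one can build the right-angled hexagons with the prescribed alternating side lengths and glue them along the arcs, so the map $Y\mapsto(\ell_Y(\tau_j))_j$ is a bijection $\CT(Y_0)\to\BR_{>0}^N$. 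The boundary lengths $\ell_Y(\D^\pm Y)$ are then explicit functions of the $x_j$: each boundary component is a union of the remaining sides of the hexagons it meets, and each such side has length given by the hexagon formula in terms of the three alternate sides.

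The plan is to set $x_1=x_2=\epsilon'<\epsilon$ and choose the other $x_j$ so that $\ell(\D^-Y)=\ell(\D^+Y)$. Concretely, I take all other $x_j$ to be equal to a parameter $s>0$ and examine how the function $f(s)=\ell(\D^-Y)-\ell(\D^+Y)$ behaves. The expected mechanism is that when one arc becomes short, the hexagon sides opposite to it become long, roughly like $\log(1/x)$. So if $\tau_1$ or $\tau_2$ has both endpoints on the same component, that component tends to get long, and I need to compensate.

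The key step is therefore an intermediate-value argument. I will make some arc $\tau_3$ with endpoints on $\D^+$ (respectively $\D^-$) very short, so that $\D^+$ (respectively $\D^-$) becomes arbitrarily long while the $\tau_1,\tau_2$ lengths stay fixed at $\epsilon'$. Continuity in the hexagon coordinates then gives a point where $f=0$. To run this I must show that, after fixing $\tau_1,\tau_2$, the completion can be chosen to contain an arc adjacent only to $\D^+$, and another arc adjacent only to $\D^-$, each disjoint from $\tau_1\cup\tau_2$, and that shrinking such an arc drives the corresponding boundary length to $+\infty$ while keeping the other boundary bounded.

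This is the main obstacle, and it is exactly where the hypothesis that $Y_0$ is not a two-holed torus should enter. When $Y_0$ is a two-holed torus, $\tau_1\cup\tau_2$ can fill too much of the surface, for instance when both arcs run from $\D^-$ to $\D^-$ and their complement leaves no room for an arc based at $\D^+$ separated from them. Then one boundary is forced to be long while the other is not, and equal lengths become incompatible with both arcs being short. For other topologies I would argue case by case on the complement $Y_0\setminus(\tau_1\cup\tau_2)$. This complement still has Euler characteristic at most $-1$ in every piece that touches the boundary, so it contains an essential arc returning to whichever boundary component needs lengthening, and the intermediate-value argument applies.
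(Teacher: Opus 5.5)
The step that fails is the one you flag as the main obstacle, and it cannot be repaired. Your justification is that every piece of $Y_0\setminus(\tau_1\cup\tau_2)$ touching the boundary has Euler characteristic at most $-1$, and hence contains an essential arc returning to whichever component needs lengthening. This is false in every genus.

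Let $\tau_1$ be a simple arc from $\D^+Y_0$ to $\D^-Y_0$, and let $\tau_2$ be the frontier of a regular neighbourhood of $\D^+Y_0\cup\tau_1$. Then $\tau_2$ is an essential simple arc with both endpoints on $\D^-Y_0$, disjoint from $\tau_1$, and it cuts off an annulus $R$ containing $\D^+Y_0$ and $\tau_1$. The piece of $Y_0\setminus(\tau_1\cup\tau_2)$ containing $\D^+Y_0$ is a disk. So every arc disjoint from $\tau_1\cup\tau_2$ that meets $\D^+Y_0$ is parallel to $\tau_1$, and there is no $\D^+$-to-$\D^+$ arc to shrink.

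In fact the lemma itself fails for this pair. For any $Y$, cutting $R$ along the orthogeodesic $\tau_1$ gives a right-angled hexagon with alternate sides $\tau_1,\tau_1,\tau_2$. The side opposite $\tau_2$ is all of $\D^+Y$. The two sides opposite the copies of $\tau_1$ have equal length $L$ and together form a proper subsegment of $\D^-Y$. Writing $A=\cosh^2\ell_Y(\tau_1)$ and $C=\cosh\ell_Y(\tau_2)$, the hexagon formula gives
\[
\cosh\ell_Y(\D^+Y)=\frac{A+C}{A-1},\qquad \cosh^2L=\frac{A}{A-1}\cdot\frac{C+1}{C-1},\qquad \ell_Y(\D^-Y)>2L .
\]
A short computation shows that $\cosh(2L)>\cosh\ell_Y(\D^+Y)$ if and only if $4A>(C-1)^2$. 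Since $A\ge 1$, this holds whenever $C<3$. Hence $\cosh\ell_Y(\tau_2)\ge 3$ for every $Y\in\CT_b(Y_0)$, whatever the genus: $\tau_2$ cannot be made short in $\CT_b(Y_0)$ even on its own.

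One correction to your heuristic: it is the boundary sides \emph{adjacent} to a short arc that blow up. The side opposite it stays bounded, which is exactly why $\D^+Y$ cannot keep pace with $\D^-Y$ here.

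So the statement needs an extra hypothesis on the pair, and \emph{not a two-holed torus} is not the relevant one. The paper's proof breaks at the corresponding point. It closes $\tau_1\cup\tau_2$ up to a simple multicurve $\gamma$ in the closed surface $X_0$ obtained by gluing $\D^-Y_0$ to $\D^+Y_0$, and then pinches $\gamma$, so equality of the boundary lengths comes for free. For the pair above no such $\gamma$ exists. All arcs of $\gamma\cap Y_0$ meeting $\D^+Y_0$ would be parallel to $\tau_1$, while $\tau_2$ adds two endpoints on $\D^-Y_0$, so $\gamma$ could not cross $\D^-Y_0$ and $\D^+Y_0$ equally often.

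When the arcs you need do exist, your argument is sound and genuinely different from the paper's. You need an essential $\D^+$-to-$\D^+$ arc and an essential $\D^-$-to-$\D^-$ arc, each disjoint from and not parallel to $\tau_1,\tau_2$. Shrinking the first sends $\ell(\D^+)\to\infty$ with $\ell(\D^-)$ bounded, and shrinking the second does the opposite. The intermediate value theorem on the connected set $\{\ell(\tau_1)=\ell(\tau_2)=\epsilon'\}\cong\BR_{>0}^{N-2}$ then produces a point of $\CT_b(Y_0)$. The paper instead works in the closed surface, where balancing is automatic but the combinatorial input is the existence of $\gamma$. In either version that existence hypothesis must be stated and checked, not derived from an Euler characteristic count.
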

\begin{proof}
    Let $X_0$ be the surface obtained by identifying (in an orientation reversing way) both components of $Y_0$ and let $\Gamma_0\subset X_0$ be trace of the boundary of $Y_0$. When we cut $Y_0$ along $\tau_1$ and $\tau_2$ we get a surface other than the annulus. A minimal amount of inspection implies that there is a simple multicurve $\gamma\subset X_0$ such that when we cut it along $\Gamma_0$ then, among possibly other arcs, we get $\tau_1,\tau_2$. To get the desired hyperbolic structure, take a metric $X'$ on $X_0$ with $\ell_{X'}(\gamma)<\epsilon$, and then let $Y'$ be the surface obtained by $X'$ cutting along the geodesic representative of $\Gamma_0$.
\end{proof}

We can now prove the claim in the case that $Y_0$ is not a two-holed torus.
    
Let $\CA_s\subset\CA$ be the set of simple arcs and note that the restriction of $\iota$ to $\CA_s$ is injective, just because it is the restriction of a bijection. Lemma \ref{lem app 1} implies that for $\tau\in\CA_s$ there is some $Y\in\CT_b(Y_0)$ where $\ell_Y(\tau)$ is very small. In the surface $Y'$ satisfying \eqref{eq appendix 1} we have then that also $\ell_{Y'}(\iota(\tau))$ is very small. That implies that $\iota(\tau)$ is simple. We have proved that $\iota(\CA_s)\subset\CA_s$. The same argument shows that if $\tau,\tau'\in\CA_s$ are disjoint, then so are $\iota(\tau),\iota(\tau')$. It follows that, thinking of $\CA_s$ as the set of vertices of the arc complex $\BA(Y_0)$, the injective map $\iota:\CA_s\to\CA_s$ extends to a simplicial map
$$\iota:\BA(Y_0)\to\BA(Y_0)$$
from the arc complex to itself. Now the key observation, due to Irmak and McCarthy \cite{Irmak}, is that {\em for any compact, connected, orientable surface with nonempty boundary, any injective map from the arc complex to itself is induced by a homeomorphism of the surface}. It follows that there is $\phi:Y_0\to Y_0$ with
$$\iota(\alpha)=\phi(\alpha)$$
for every simple arc $\alpha\in\CA_s$. Now we get from \eqref{eq appendix 1} that
$$\ell_{\phi^{-1}(Y')}(\tau)=\ell_{Y'}(\phi(\tau))=\ell_Y(\tau)\text{ for all }\tau\in\CA_s.$$
This implies that $\phi^{-1}(Y')=Y\in\CT(Y_0)$, as we needed to prove.

We have proved the claim if $Y_0$ is not a two-holed torus. Let's discuss this remaining case next. The issue here is that if $Y_0$ is a two-holed torus then any $Y\in\CT_b(Y_0)$ has a non-trivial isometry which exchanges both boundary components $\D^-Y$ and $\D^+Y$. Moreover, this isometry has the property that it sends any simple arc $\alpha$ with boundary in $\D^-Y$ to the unique simple arc $\alpha'$ with boundary in $\D^+Y$ which is disjoint of $\alpha$--we refer of $\alpha'$ as the {\em twin} of $\alpha$, and vice versa. This implies that, while for a simple arc $\tau_1$ one can always find $Y\in\CT_b(Y_0)$ with $\ell_Y(\tau_1)<\epsilon$, it might be impossible two get two disjoint simple arcs $\tau_1,\tau_2$ short at the same time. However, what we said so far implies that the map $\iota:\CA\to\CA$ still maps $\CA_s$ into itself, that it maps pairs of twins to pairs of twins, and that it preserves disjointness unless we compare arcs with end-points in the same boundary component of $Y_0$. 

The fact that twins always have the same length means in some sense that there is some indeterminacy for $\iota$. We thus define a new map 
$$\iota':\CA\to\CA\ \iota':\alpha\mapsto\left\{\begin{array}{ll}
    \iota(\alpha)&\text{ if }\alpha\text{ joins both components of }\D Y_0\\
    \iota(\alpha)&\text{ if }\D\alpha\subset\D^\pm Y_0\text{ and }\D\iota(\alpha)\subset\D^{\pm Y_0}\\
    \text{twin of }\iota(\alpha) &\text{ otherwise,}
\end{array}   \right.$$
and note that any two $Y,Y'$ satisfying \ref{eq appendix 1}, do also satisfy $\ell_{Y'}(\iota'(\tau))=\ell_Y(\tau)$ for all $\tau\in\CA$. We claim now that the map $\iota'$ sends disjoint arcs to disjoint arcs. To see that this is the case it suffices to prove that if $\tau_1,\tau_2$ are disjoint simple arcs with $\D\tau_1,\D\tau_2\subset\D^-Y_0$ then $\iota'(\tau_1)$ and $\iota'(\tau_2)$ are also disjoint. To see that this is the case note that if we let $\kappa_1,\kappa_2$ be simple arcs joining $\D^-Y_0$ to $\D^+Y_0$ and disjoint of both $\tau_1$ and $\tau_2$, and we take for $\epsilon>0$ very short any $Y\in\CT_b(Y_0)$ such that 
\begin{equation}\label{eq get metric 2 holed torus}
    \ell_Y(\tau_1),\ \ell_Y(\kappa_1),\text{ and }\ell_Y(\kappa_2)<\epsilon
\end{equation} 
the following holds:
\begin{enumerate}
    \item $\ell_Y(\tau_2)\le -2\log(\epsilon)+10$,
    \item any arc $\alpha$ with $\D\alpha\subset\D^-Y$ with length less that $-4\log(\epsilon)-10$ is disjoint of $\tau_1$.
\end{enumerate}
Evidently the number $10$ could be optimized, but we see no value in doing so. Anyways, the point of this all is that the surface $Y'\in\CT_b(Y_0)$ such that the pair $(Y,Y')$ satisfies \eqref{eq appendix 1} again satisfies \eqref{eq get metric 2 holed torus} when we replace $\tau_1,\kappa_1,\kappa_2$ by $\iota'(\tau_1),\iota'(\kappa_1)$ and $\iota'(\kappa_2)$. Since $\iota'(\tau_2)$ has boundary in $\D^-Y$ (by construction of $\iota'$) and satisfies, for $\epsilon$ small enough, that 
$$\ell_Y'(\iota'(\tau_2))=\ell_Y(\tau_2)\stackrel{\text{(1)}}<-2\log(\epsilon)+10<-4\log(\epsilon)-10$$
we get from (2) that $\iota'(\tau_2)$ is disjoint of $\iota'(\tau_1)$. This proves that $\iota':\CA_s\to\CA_s$ sends disjoint simple arcs to disjoint simple arcs. We can now conclude the proof of the claim exactly as we did when $Y_0$ was not a two-holed torus.\qed

\end{appendix}

\end{document}